%
%

\documentclass{jT}

\usepackage{amssymb,amsmath,latexsym,amsthm}
\usepackage[english]{babel}

\newtheorem{theorem}{Theorem}
\newtheorem{lemma}{Lemma}
\newtheorem{proposition}{Proposition}

\newtheorem{notation}{Notation}
\theoremstyle{definition}
\newtheorem*{definition}{Definition}
\newtheorem*{remark}{Remark}
\newtheorem*{example}{Example}
\numberwithin{equation}{section}

\numberwithin{equation}{section}

%
%

\begin{document}

\title[A C.F. algorithm with Lagrange \& Dirichlet properties in dim $2$]{ A
two-dimensional continued fraction algorithm with Lagrange and Dirichlet
properties}

\author{\textsc{Christian Drouin}}
\address{Christian Drouin\\
26 Avenue d'Yreye\\
40 510 SEIGNOSSE FRANCE}
\email{http://www.christian.drouin@wanadoo.fr}

\subjclass[2000]{11 J 70, 11 J 13, 11 H 06}

\maketitle

\begin{resume}
On  d\'{e}montre dans cet article un Th\'{e}or\`{e}me
de Lagrange,
 pour un certain algorithme de fraction continue en dimension 2, dont la
 d\'{e}finition g\'{e}om\'{e}trique est tr\`{e}s naturelle. Des propri\'{e}t\'{e}s type Dirichlet sont aussi obtenues pour la convergence de cet algorithme.
Ces propri\'{e}t\'{e}s proviennent de caract\'{e}ristiques g\'{e}om\'{e}triques de l'algorithme. Les relations entre ces diff\'{e}rentes propri\'{e}t\'{e}s sont \'{e}tudi\'{e}es. En lien avec l'algorithme pr\'{e}sent\'{e}, sont rapidement \'{e}voqu\'{e}s les travaux de divers auteurs dans le domaine des fractions continues multidimensionnelles.
\end{resume}

\begin{abstr}
A Lagrange Theorem in dimension 2 is proved in this paper, for a particular two
dimensional continued fraction algorithm, with a very natural geometrical definition. Dirichlet type properties for the
convergence of this algorithm are also proved. These properties proceed from
a geometrical quality of the algorithm. The links between all these
properties are studied. In relation with this algorithm, some references are given to the works of various authors, in the 
domain of multidimensional continued fractions algorithms. 
\end{abstr}

\bigskip

\section{Introduction and results}

\subsection{Quick presentation of the main results}

Since the beginning of the theory of Multidimensional Continued Fractions,
an extension of the well known \emph{Lagrange Theorem} in dimension one has
been searched for. Historical remarks on the multidimensional continued
fractions (the Jacobi-Perron algorithm and others) can be found in the
works by F. Schweiger: \cite{Schwei} and \cite{Schw WL}, and A.J. Brentjes: 
\cite{Brentj}.

The classical one-dimensional continued fraction algorithm applied on a real
number $x$ generates a sequence $\left( \xi _{s}\right) _{s\in 
\mathbb{N}
}$ in $%
\mathbb{R}
$, with $\xi _{0}=x$, named the "complete quotients", and Lagrange proved
that the following assertions are equivalent: 

$\left( \mathbf{1}\right) $ $x$
is a quadratic algebraic number.

$\left( \mathbf{2}\right) $ There exist natural numbers $s\geq 0$ and $p\geq
1$ such that $\xi _{s+p}=\xi _{s}$.

$\left( \mathbf{3}\right) $ There exist natural numbers $s_{0}\geq 0$ and $%
p\geq 1$ such that for every $s\geq s_{0}$, $\xi _{s+p}=\xi _{s}$ holds
(periodicity).

The property $\left( \mathbf{2}\right) $ will be called \emph{loop property }%
in this paper.

Here we define a very natural two-dimensional continued-fraction algorithm
for which the analogue in dimension two of the properties $\left( \mathbf{1}%
\right) $ and $\left( \mathbf{2}\right) $ are equivalent: This algorithm,
named \emph{Smallest Vector Algorithm }or "\emph{SVA}",\ \textbf{makes a loop%
} (property $\left( \mathbf{2}\right) $) if and only if the real numbers
which are its two initial values are in the same cubic field (property $%
\left( \mathbf{1}\right) $). The \emph{SVA }is defined at the beginning of
Subsection 1.3..

We have to notice that we do not have periodicity, i.e. the property $\left( 
\mathbf{3}\right) $, for initial values in the same cubic fields. The reason
why is that our algorithm, unlike a lot of known multidimensional continued
fraction algorithms, is not of the \emph{vectorial }kind. Therefore, the
loop property $\left( \mathbf{2}\right) $ does not imply periodicity $\left( 
\mathbf{3}\right) $. Nevertheless, the loop property $\left( \mathbf{2}%
\right) $ implies interesting algebraic properties and the fact that the
algorithm is not vectorial permits strong approximation properties.

Let's state our Lagrange-type theorem. From any initial value

$\mathbf{X}_{0}=\mathbf{X=~}^{\textsc{T}}\left( x_{0},x_{1},x_{2}\right)$,
with $0<x_{0}<x_{1}<x_{2}$, the \emph{Smallest Vector Algorithm }generates a
sequence $\left( \mathbf{X}_{s}\right) =\left( ^{\textsc{T}}\left(
x_{0,s}~,x_{1,s}~,x_{2,s}\right)\right) $ of triplets of real numbers, and we have
the following statement.

\begin{theorem}[Lagrange Loop Theorem]
\textbf{First Part: }Let $\rho $ be any real root of a third degree
irreducible polynomial $P\left( r\right) =r^{3}-ar^{2}-br-c$, with $a,b,c$ 
\emph{rationals}; let $\mathbf{X=~}^{\textsc{T}}\left( x_{0},x_{1},x_{2}\right) $ be
any rationally independent triplet of real numbers in the field $\mathbb{Q}%
\left[ \rho \right] $, with $0<x_{0}<x_{1}<x_{2}$. Then the \emph{Smallest
Vector Algorithm} applied on the triplet $\mathbf{X}$ "makes a
loop\textquotedblright: there exist integers $s$\ and $p$ with $p>0$ \ and
a real number $\lambda $ such that: 
\begin{equation*}
\mathbf{X}_{s+p}=\lambda \mathbf{X}_{s}\text{ or equivalently: }\mathbf{x}%
_{s+p}=\mathbf{x}_{s}\text{, with }\mathbf{x}_{s}=\left( \frac{x_{0,s}}{%
x_{2,s}},\frac{x_{1,s}}{x_{2,s}}\right) \text{.}
\end{equation*}%
Moreover, $\lambda $ is an algebraic \emph{integer }of degree 3, and a \emph{%
unit, }such that $\mathbb{Q}\left[ \rho \right] =\mathbb{Q}\left[ \lambda %
\right] $. The minimal polynomial of $\lambda $ can be easily deduced from
the relation $\mathbf{X}_{s+p}=\lambda \mathbf{X}_{s}$, as also the
expressions of $\ \frac{x_{0}}{x_{2}}$ and $\frac{x_{1}}{x_{2}}$ as rational
fractions of $\lambda $.\newline
\textbf{Second Part: Converse Statement:} Let $\mathbf{X=~}^{\textsc{T}}\left(
x_{0},x_{1},x_{2}\right) $ be any rationally independent triplet of real
numbers, with $0<x_{0}<x_{1}<x_{2}$. Let's suppose that the \emph{Smallest
Vector Algorithm} applied on the triplet $\mathbf{X}$ makes
\textquotedblright a loop\textquotedblright\ i.e. that $\mathbf{X}%
_{s+p}=\lambda \mathbf{X}_{s}$ with $p>0$. Then $\lambda $ is an algebraic 
\emph{integer }of degree 3, and a \emph{unit.} Again, the minimal polynomial
of $\lambda $ can be easily deduced from the relation $\mathbf{X}%
_{s+p}=\lambda \mathbf{X}_{s}$, as also the expressions of $\frac{x_{0}}{%
x_{2}}$ and $\frac{x_{1}}{x_{2}}$ as rational fractions of $\lambda $.
\end{theorem}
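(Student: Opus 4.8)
The plan is to handle the two parts together: the \emph{Second Part} is purely algebraic and also yields the last assertions of the \emph{First Part}, while the only substantial work in the \emph{First Part} is a finiteness statement, from which the loop follows by pigeonhole.

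\emph{The eigenvalue picture (Second Part, and the ``moreover'' of the First Part).} Each step of the \emph{SVA} replaces the current triplet by its image under an integer matrix of determinant $\pm1$; hence $\mathbf{X}_{s}=A_{s}\mathbf{X}$ for some $A_{s}\in GL_{3}(\mathbb{Z})$, and a loop $\mathbf{X}_{s+p}=\lambda\mathbf{X}_{s}$ reads $B\mathbf{X}_{s}=\lambda\mathbf{X}_{s}$ with $B:=A_{s+p}A_{s}^{-1}\in GL_{3}(\mathbb{Z})$: $\lambda$ is an eigenvalue of an integer unimodular matrix with eigenvector $\mathbf{X}_{s}$, hence a root of the monic integer polynomial $\chi_{B}$, hence an algebraic integer. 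Since $A_{s}$ is rational and invertible, $\mathbf{X}_{s}=A_{s}\mathbf{X}$ is still rationally independent; so $\lambda$ cannot have degree $\le 2$, for then $\chi_{B}$ would factor over $\mathbb{Q}$ and the eigenvector $\mathbf{X}_{s}$ would be confined to a line defined over the proper subfield $\mathbb{Q}[\lambda]$ --- or, if $\deg\lambda=1$, to a proper $\mathbb{Q}$-rational subspace --- forcing a nontrivial $\mathbb{Q}$-linear relation among $x_{0,s},x_{1,s},x_{2,s}$. The one degenerate case $B=\lambda I$ forces $\lambda=\pm1$, hence $\lambda=1$ since every $\mathbf{X}_{s}$ has positive coordinates, and $B=I$ is excluded because no product of $p\ge 1$ \emph{SVA} transition matrices is the identity. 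Therefore $\deg\lambda=3$, $\chi_{B}$ is the minimal polynomial of $\lambda$, and its constant term $-\det B=\mp1$ gives $N_{\mathbb{Q}[\lambda]/\mathbb{Q}}(\lambda)=\det B=\pm1$: $\lambda$ is a unit. Finally $\lambda=x_{2,s+p}/x_{2,s}$ lies in the cubic field generated by the coordinates --- which is $\mathbb{Q}[\rho]$ in the First Part --- so $\mathbb{Q}[\lambda]=\mathbb{Q}[\rho]$; the minimal polynomial of $\lambda$ is $\chi_{B}$, and writing $\mathbf{X}_{s}$ as the explicit $\mathbb{Q}[\lambda]$-rational eigenvector of $B$ and undoing $A_{s}$ expresses $x_{0}/x_{2}$ and $x_{1}/x_{2}$ as rational fractions of $\lambda$.

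\emph{Producing the loop (First Part).} It suffices to show that the set of projective points $\{\mathbf{x}_{s}:s\ge 0\}\subset\mathbb{R}^{2}$ is \emph{finite}: then pigeonhole gives $\mathbf{x}_{s+p}=\mathbf{x}_{s}$ with $p\ge 1$, which is $\mathbf{X}_{s+p}=\lambda\mathbf{X}_{s}$ with $\lambda=x_{2,s+p}/x_{2,s}>0$, and the previous paragraph supplies the algebraic properties of $\lambda$. Let $\sigma_{1}=\mathrm{id},\sigma_{2},\sigma_{3}$ be the embeddings of $\mathbb{Q}[\rho]$ into $\mathbb{C}$; after rescaling $\mathbf{X}$ we may assume $x_{0},x_{1},x_{2}\in\mathcal{O}_{\mathbb{Q}[\rho]}$, so every $x_{i,s}=r\cdot\mathbf{X}$ (with $r$ the $i$-th row of $A_{s}$, an integer vector) is a nonzero algebraic integer, whence $|N_{\mathbb{Q}[\rho]/\mathbb{Q}}(x_{i,s})|\ge 1$. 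The matching \emph{upper} bound is where the geometry of the \emph{SVA} is used: its ``smallest vector'' construction and the Dirichlet-type estimates proved for it guarantee that each row $r$ of $A_{s}$ satisfies $|r\cdot\mathbf{X}|\cdot\|r\|^{2}\ll 1$; combining this with the trivial bounds $|r\cdot\sigma_{j}(\mathbf{X})|\ll\|r\|$ ($j=2,3$, with $\sigma_{j}$ applied coordinatewise) yields $|N_{\mathbb{Q}[\rho]/\mathbb{Q}}(x_{i,s})|=|r\cdot\mathbf{X}|\cdot|r\cdot\sigma_{2}(\mathbf{X})|\cdot|r\cdot\sigma_{3}(\mathbf{X})|\ll 1$, and, jointly with the boundedness of the triplets along the \emph{SVA}, a uniform bound on all conjugates $\sigma_{j}(x_{i,s}/x_{2,s})$. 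Hence $x_{0,s}/x_{2,s}$ and $x_{1,s}/x_{2,s}$ are elements of $\mathbb{Q}[\rho]$ of bounded height --- bounded archimedean absolute values at every place, and bounded denominators because $|N(x_{2,s})|$ is bounded --- so each takes finitely many values by Northcott's theorem, and $\{\mathbf{x}_{s}\}$ is finite.

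\emph{The main obstacle.} The crux is the finiteness of $\{\mathbf{x}_{s}\}$, and within it the passage from the strictly real, geometric behaviour of the \emph{SVA} --- which never sees $\sigma_{2},\sigma_{3}$ --- to the two-sided control of the norms $|N(x_{i,s})|$ and of the conjugate sizes. This is precisely where the hypothesis that $\mathbf{X}$ generates a cubic field and the ``smallest vector'' design of the algorithm must be used together: the former furnishes three embeddings and a product-of-linear-forms lower bound, the latter the matching upper bound through the lattice geometry of its approximation vectors; one must also check that the balance and convergence properties already established for the \emph{SVA} are strong enough (the totally real case, where $\rho$ and its conjugates are all real, leaning on this most because of the larger unit rank). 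Everything else is either formal --- the eigenvalue bookkeeping above --- or a standard finiteness principle (finitely many algebraic numbers of bounded height in a fixed number field); a minor but genuine point is the exclusion of $B=I$ in the converse, which uses a structural property of the \emph{SVA} transition matrices rather than arithmetic.
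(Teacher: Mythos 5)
Your Second Part matches the paper's Lemmas 3 and 4 almost exactly: $\lambda$ is an eigenvalue of the integer unimodular matrix $B$, hence an algebraic integer and a unit; the degree-$\le 2$ case is excluded because a rational eigenline of $B$ defined over $\mathbb{Q}[\lambda]$ (degree $\le 2$) would force a nontrivial $\mathbb{Q}$-linear relation among the coordinates of $\mathbf{X}_s$; and the rational-fraction expressions come from solving the eigenvector system. Your exclusion of $B=I$ via a structural property of the transition matrices is correct but slightly less direct than the paper's, which simply notes that the subtractive recursion strictly decreases the cofactors, so $\lambda = x_{2,s+p}/x_{2,s}<1$, ruling out $\lambda=\pm 1$ at once.

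For the First Part you take a genuinely different route. The paper (Section 2 for the special case, Section 5 in general) constructs from a column $\mathbf{b}^{(s)}$ an explicit rational matrix $\mathbf{M}^{(s)}$ having $\mathbf{\Theta}=(1,\theta,\theta^2)$ as eigenvector, and shows via the Dirichlet estimates (Theorem 2 b/c) that the integer matrices $DQ\,^{\textsc{T}}\mathbf{M}^{(s)}\mathbf{J}^{(s)}$ are bounded on an infinite set $S$; pigeonhole among finitely many integer matrices then yields the loop with an explicit $\lambda$ as a quotient of elements of $\mathbb{Z}[\theta]$. You instead rescale so that the coordinates are algebraic integers and argue that the projective cofactors $x_{i,s}/x_{2,s}$ have bounded Weil height for $s\in S$, invoking Northcott. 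Both arguments rest on the same two pillars — the max-Dirichlet bound (legitimate because cubic triples are badly approximable, as the paper proves at the start of its Proposition 1) and the integrality lower bound $|N(x_{i,s})|\ge 1$ — so the difference is genuinely one of packaging: yours is conceptually crisper; the paper's is more elementary (no height machinery, only finiteness of bounded sets of integer matrices) and fits the paper's stated aim of using only elementary tools.

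The one step you assert rather than derive is the uniform bound on the conjugates $\sigma_j(x_{i,s}/x_{2,s})$ for $j=2,3$, and the justification you offer ("jointly with the boundedness of the triplets along the SVA") is not the right one — boundedness of the real ratios under $\sigma_1=\mathrm{id}$ says nothing about the other embeddings. The missing step is a two-sided squeeze on the conjugates of $x_{2,s}$: writing $G_s:=\max_i\Vert\mathbf{g}_i^{(s)}\Vert$, the Dirichlet bound gives $x_{2,s}\ll G_s^{-2}$, the trivial estimate gives $|\sigma_j(x_{2,s})|\ll G_s$, and then $|N(x_{2,s})|\ge 1$ forces $|\sigma_j(x_{2,s})|\gg G_s$ for $j=2,3$; combined with $|\sigma_j(x_{i,s})|\ll G_s$ this yields $|\sigma_j(x_{i,s}/x_{2,s})|\ll 1$. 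Without this (or some equivalent use of the norm lower bound on the denominator, not only the numerator) the Northcott step is unjustified; with it, your argument closes. Note finally that what you obtain is the finiteness of $\{\mathbf{x}_s:s\in S\}$, not of the whole sequence, but that is all the pigeonhole argument requires.
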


The objects in this theorem are more precisely described in the following
subsections. We also prove that the same algorithm provides rational
approximations with Dirichlet properties, id est, with an
optimal exponent.

Throughout this paper, we are going to use only the canonical euclidean norm and inner product 
in $%
\mathbb{R}
^{3}$\ for our approximations. 

Our \emph{Dirichlet} \ property is that for \emph{every }independent triplet
of positive real numbers $\mathbf{X=~}^{\textsc{T}}\left( x_{0},x_{1},x_{2}\right) $,
the algorithm generates a sequence $\left( \mathbf{g}_{0}^{(s)},\mathbf{g}%
_{1}^{(s)},\mathbf{g}_{2}^{(s)}\right) $ of triplets of three-dimensional 
\emph{integer} vectors, which realize integer approximation of the plane $%
\left( \mathbf{X}^{\perp }\right) $ with the following inequality, on an
infinite set $S$ of integers:%
\begin{equation*}
\underset{s\in S}{\sup }\left[ \left( \underset{i=0,1,2}{\min }\left\vert 
\mathbf{g}_{i}^{(s)}\bullet \mathbf{X}\right\vert \right) \left( \underset{%
i=0,1,2}{\max }\left\Vert \mathbf{g}_{i}^{(s)}\right\Vert \right) ^{2}\right]
<+\infty
\end{equation*}%
(the index $^{\left( s\right) }$ is above, in parentheses; the big point
denotes the scalar product), with of course:$\underset{s\rightarrow +\infty ,%
\text{ }s\in S}{\lim }\left( \underset{i=0,1,2}{\max }\left\Vert \mathbf{g}%
_{i}^{(s)}\right\Vert \right) =+\infty $. See Theorem 2 in subsection 1.3..
We prove additional Dirichlet properties, for the integer approximation of $%
\mathbb{R}\mathbf{X}$ as well as of $\mathbf{X}^{\perp }$, when $\mathbf{X}%
^{\perp }$ (or $\mathbf{X}$) has a bad approximation property. See again
subsection 1.3..

Let's notice that the approximation properties of our algorithm hold only
for a \emph{subsequence} of the integer vectors $\left( \mathbf{g}_{0}^{(s)},%
\mathbf{g}_{1}^{(s)},\mathbf{g}_{2}^{(s)}\right) _{s\in 
\mathbb{N}
};$ the algorithm has a very simple geometrical definition, and strong
geometrical, algebraic and approximation properties, but it is not designed
to provide \emph{only} best approximants, or \emph{only }approximants with
optimal exponent.

The goal of this paper is also to show the relations between different kinds
of properties of such an algorithm:

$\left( \text{a}\right) $ \emph{Lagrange property}; $\left( \text{b}\right) $
\emph{Dirichlet approximation properties};

$\left( \text{c}\right) $ \emph{Best approximation properties}.

$\left( \text{d}\right) $ \emph{Properties of the triplet }$X$ which is the
initial value of the algorithm (it may be badly approximable by integers, or
well approximable)

$\left( \text{e}\right) $ \emph{Geometrical properties of the tetrahedrons}
formed by the three integer vectors, generated at each step by the algorithm.

This study is a generalization of the well known continued fractions theory
in dimension 1, with a two-dimensional algorithm which has more properties
than most of the existing ones. Let's notice that all the mathematical
techniques used in this paper are elementary. The most sophisticated tool
appearing here is the Minkowski's Theorem on \emph{Successive Minima} of
symmetrical convex sets.

At the end of the paper, in Section 6., the reader shall find  a short review of the themes on Diophantine approximation which are related to this
paper, with some bibliographical references.

In subsection 1.3. are given the \emph{main theorems and definitions }of
this paper. In subsection 1.4. the reader shall find two numerical examples
of Lagrange loops. The \emph{plan }of our paper is in subsection 1.5.

But first, in order to understand better the two-dimensional case, we recall
some facts and notations about the classical continued fractions algorithm
in dimension one, from a particular point of view.

\subsection{The one dimensional example}

\subsubsection{A formalism with matrices}

A real number $x$ is chosen, with $0<x<1$. Here it is supposed to be
irrational. Let the vector $\mathbf{X}$ \ be: $\mathbf{X}=\ ^{\text{T}%
}\left( x,1\right) $, where the "$^\mathsf{T}$" denotes the transposition. The
classic one-dimensional algorithm provides integer points $\ ^{\textsc{T}}%
\left( p_{n},q_{n}\right) $ which are the nearest integer points to the
line $\mathbb{D}=\mathbb{R}\mathbf{X}$. These points are called "convergent"
points. We consider the matrices $\mathbf{\ B}_{n}=%
\begin{pmatrix}
p_{n-1} & p_{n} \\ 
q_{n-1} & q_{n}%
\end{pmatrix}%
$. We have the relation: $\mathbf{B}_{n+1}=\mathbf{B}_{n}\times 
\begin{pmatrix}
0 & 1 \\ 
1 & a_{n}%
\end{pmatrix}%
$, where $a_{n}$ is the $n$-th "partial quotient" of the continued fraction
and is a strictly positive integer. If we denote: $\mathbf{A}_{n}=%
\begin{pmatrix}
0 & 1 \\ 
1 & a_{n}%
\end{pmatrix}%
$, then the approximating matrices $\mathbf{B}_{n}$ appear as products of
matrices $\mathbf{A}_{k}$ ($1\leq k\leq n-1$). Let's notice that $\mathbf{B}%
_{1}$ is the Identity matrix.

In order to be closer to our two-dimensional algorithm, we may also split
the $n$-th step into more elementary steps, and consider the simple matrix $%
\ \mathbf{D}=%
\begin{pmatrix}
1 & 0 \\ 
1 & 1%
\end{pmatrix}%
$, then we have $\mathbf{B}_{n+1}=\mathbf{B}_{n}\mathbf{A}_{n}=\mathbf{B}_{n}%
\mathbf{D}^{a_{n}}%
\begin{pmatrix}
0 & 1 \\ 
1 & 0%
\end{pmatrix}%
$. The last matrix corresponds to an exchange of vectors, when the following
convergent$~^{\textsc{T}}\left( p_{n+1},q_{n+1}\right) $ is found. We may
notice that all the matrices involved have determinant $\pm 1$.

\subsubsection{Polar matrices, cofactors, periodicity}

We also introduce the polar matrices $\mathbf{G}_{n}$, each of them being
the transposed matrix of the inverse of $\mathbf{B}_{n}$. Let $\mathbf{g}%
_{0,n}$ and $\mathbf{g}_{1,n}$ be the column vectors of $\mathbf{G}_{n}$.
These vectors realize integer approximations of the line $\mathbf{\Delta }$
orthogonal to $\mathbb{D}$, and the regular continued fraction algorithm is
precisely designed to obtain both: $\mathbf{g}_{0,n}\bullet \mathbf{X}>0$
and $\mathbf{g}_{1,n}\bullet \mathbf{X}>0$ for the scalar products.

These quantities $\mathbf{g}_{0,n}\bullet \mathbf{X}$ and $\mathbf{g}%
_{1,n}\bullet \mathbf{X}$ are particularly important in the theory of
continued fractions. Let $\mathbf{b}_{0,n}$ and $\mathbf{b}_{1,n}$ be the
column vectors of $\mathbf{B}_{n}$. We have the vectorial relation: $\left( 
\mathbf{g}_{0,n}\bullet \mathbf{X}\right) \mathbf{b}_{0,n}+$ $\left( \mathbf{%
g}_{1,n}\bullet \mathbf{X}\right) \mathbf{b}_{1,n}=\mathbf{X}$. (To see
that, make the scalar product of the left-hand vector of the equality with $%
\mathbf{g}_{0,n}$, and then with $\mathbf{g}_{1,n}$). Because of this
relation, $\left( \mathbf{g}_{0,n}\bullet \mathbf{X}\right) $ and $\left( 
\mathbf{g}_{1,n}\bullet \mathbf{X}\right) $ are called the \emph{cofactors}
in the algorithm.

Let's form the \emph{cofactors vector}$:$ $\mathbf{X}_{n}=%
\begin{pmatrix}
\mathbf{g}_{0,n}\bullet \mathbf{X} \\ 
\mathbf{g}_{1,n}\bullet \mathbf{X}%
\end{pmatrix}%
.$ We have:

$~^{\textsc{T}}\mathbf{G}_{n}\mathbf{X}=\mathbf{X}_{n}$, id est $\mathbf{B}%
_{n}^{-1}\mathbf{X}=\mathbf{X}_{n}$. Now we may calculate $\mathbf{X}_{n}$.

We have $\mathbf{B}_{n}^{-1}=\left( -1\right) ^{\left( n-1\right) }%
\begin{pmatrix}
q_{n} & -p_{n} \\ 
-q_{n-1} & p_{n-1}%
\end{pmatrix}%
$ and then

$\mathbf{X}_{n}=~^{\textsc{T}}\left( \left( -1\right) ^{n}\left(
p_{n}-xq_{n}\right) ,~\left( -1\right) ^{\left( n-1\right) }\left(
p_{n-1}-xq_{n-1}\right) \right) $.

Then we define the sequence $\left( x_{n}\right) $ by $x_{n}=\left(
-1\right) ^{n}\left( p_{n}-xq_{n}\right) =\mathbf{g}_{0,n}\bullet \mathbf{X}$%
. At the first step, $x_{1}=x$.

We may now give the rule which defines the quantity $a_{n}$, in dimension 1.
Here the brackets denote the integer part: $a_{n}=\left[ \dfrac{\mathbf{g}%
_{1,n}\bullet \mathbf{X}}{\mathbf{g}_{0,n}\bullet \mathbf{X}}\right] =\left[ 
\dfrac{x_{n-1}}{x_{n}}\right].$

We now define another object, the inverse of this quotient: $\xi _{n}:=\frac{%
x_{n}}{x_{n-1}}$. Then the preceding relation writes: $a_{n}=\left[ \frac{1}{%
\xi _{n}}\right] $.

The quantities $x_{n}$ and the cofactors vectors $\mathbf{X}_{n}$ are of
highest interest in questions concerning Lagrange property. This algorithm
is $\emph{eventually}$ \emph{periodic}, from the range $n$, if and only if
there exist an integer $p\geq 1$ and a real number $\lambda $ such that $%
\mathbf{X}_{n+p}=\lambda \mathbf{X}_{n}$, or equivalently: $\xi _{n+p}=\xi
_{n}$. These are the conditions we shall use in our Lagrange Theorem.

\subsubsection{Recursive relations on the polar matrices}

Let's denote by $\mathbf{M}^{\ast }$ the polar matrix of $\mathbf{M}$, such
that $\mathbf{M}^{\ast }\mathbf{=}\left( ^{\textsc{T}}\mathbf{M}\right) ^{-1}$%
. We have $\mathbf{G}_{n}=\mathbf{B}_{n}^{\ast }$, and then, each matrix $%
\mathbf{B}_{n}$ is a product of matrices $\mathbf{A}_{k}^{\ast }$ ($1\leq
k\leq n-1$), with $\mathbf{A}_{k}^{\ast }=%
\begin{pmatrix}
-a_{k} & 1 \\ 
1 & 0%
\end{pmatrix}%
$. We may consider the simpler matrix $\ \mathbf{C}=%
\begin{pmatrix}
1 & 0 \\ 
-1 & 1%
\end{pmatrix}%
$, then we have $\mathbf{G}_{n+1}=\mathbf{G}_{n}\mathbf{A}_{n}^{\ast }=%
\mathbf{G}_{n}\mathbf{C}^{a_{n}}%
\begin{pmatrix}
0 & 1 \\ 
1 & 0%
\end{pmatrix}%
$.

This leads to the relations $^{\textsc{T}}\mathbf{G}_{n+1}=\mathbf{A}%
_{n}^{-1}~^{\textsc{T}}\mathbf{G}_{n}$ \ and then, by $~^{\textsc{T}}\mathbf{G}%
_{n}\mathbf{X}=\mathbf{X}_{n}$, to: $\mathbf{X}_{n+1}=\mathbf{A}_{n}^{-1}\mathbf{X}_{n}=%
\begin{pmatrix}
-a_{n} & 1 \\ 
1 & 0%
\end{pmatrix}%
\mathbf{X}_{n}$; therefore: $x_{n+1}=x_{n-1}-a_{n}x_{n}$. Because of this
formula, the continued fractions algorithms may be called \emph{subtractive}%
. This leads to the recursive relation: $\xi _{n+1}=\frac{1}{\xi _{n}}-a_{n}=%
\frac{1}{\xi _{n}}-\left[ \frac{1}{\xi _{n}}\right] $.

In particular, if $\mathbf{g}_{0}$ and $\mathbf{g}_{1}$ are the column
vectors of $\mathbf{G},$ then the column vectors of the following matrix $%
\mathbf{G}\times \mathbf{C}=\mathbf{G}\times 
\begin{pmatrix}
1 & 0 \\ 
-1 & 1%
\end{pmatrix}%
$ are $\left( \mathbf{g}_{0}-\mathbf{g}_{1}\right) $ and $\mathbf{g}_{1}.$
Our two-dimensional algorithm is built in a similar way, in the next
subsection.

\subsubsection{Use of the orthogonal projections}

From a more geometrical point of view, let's denote by $\mathbf{g}%
_{0,n}^{\prime \prime }$ and $\mathbf{g}_{1,n}^{\prime \prime }$ the
orthogonal projections of $\mathbf{g}_{0,n}$ and $\mathbf{g}_{1,n}$ on $%
\mathbb{D}$. Then we also have: $\mathbf{X}_{n}=\left\Vert \mathbf{X}%
\right\Vert 
\begin{pmatrix}
\left\Vert \mathbf{g}_{0,n}^{\prime \prime }\right\Vert \\ 
\left\Vert \mathbf{g}_{1,n}^{\prime \prime }\right\Vert%
\end{pmatrix}%
$.

Concerning a \emph{Dirichlet property} of the algorithm, it can be written:

For each $n$, $\ \left\Vert \mathbf{g}_{1,n}^{\prime \prime }\right\Vert
~q_{n}\leq 1$. Our matricial formalism would permit to prove easily the
Lagrange Theorem for a quadratic number $x$, using this Dirichlet property.

Now we are going to generalize all these properties and demonstrations in
dimension two.

\subsection{Results in this paper}

Now we are in dimension two. Throughout this paper we suppose that the
triplet $\mathbf{X}=\ ^{\textsc{T}}\left( x_{0},x_{1},x_{2}\right) $ of real
numbers is rationally independent, and verifies $0<x_{0}<x_{1}<x_{2}$. We
use the canonical euclidean norm in $%
%
\mathbb{R}
^{3}$ and the canonical scalar product. We denote by $\mathbb{D}$ the line $%
\mathbb{D}=\mathbb{R}\mathbf{x}$ \ and by $\mathbb{P}$ the plane orthogonal
to $\mathbb{D}$.

As it is usually done in this field, the index $^{\left( s\right) }$ of the
sequences will be above, in parentheses.

\begin{definition}
The \emph{Smallest Vector Algorithm }is described by the following sequence $%
\left( \mathbf{G}^{(s)}\right) _{s\in \mathbb{N}}$ of $3\times 3$ integer
matrices, which is inductively defined by: 

\textbf{a) } $\mathbf{G}^{(0)}=\mathbf{I}$, the Identity matrix.

\textbf{b) } Let's suppose $\mathbf{G}^{(s)}=\left( \mathbf{g}_{0}^{(s)},%
\mathbf{g}_{1}^{(s)},\mathbf{g}_{2}^{(s)}\right) $ has been defined. Let $%
\mathbf{g}_{i}^{\prime (s)}$ and $\mathbf{g}^{\prime \prime }{}_{i}^{(s)}$%
denote the respective orthogonal projections of $\mathbf{g}_{i}^{(s)}$ on $%
\mathbb{D}$ and $\mathbb{P}$. Let $\Delta _{\min }$ denote $\Delta _{\min
}:=\min \left( \left\Vert \mathbf{g}_{1}^{\prime (s)}-\mathbf{g}_{0}^{\prime
(s)}\right\Vert ,\left\Vert \mathbf{g}_{2}^{\prime (s)}-\mathbf{g}%
_{1}^{\prime (s)}\right\Vert ,\left\Vert \mathbf{g}_{2}^{\prime (s)}-\mathbf{%
g}_{0}^{\prime (s)}\right\Vert \right) $. We define first the three column
vectors $\left( \mathbf{f}_{0},\mathbf{f}_{1},\mathbf{f}_{2}\right) $ of $%
\mathbf{G}^{(s+1)}$, in disorder:

$\left( \mathbf{f}_{0},\mathbf{f}_{1},\mathbf{f}_{2}\right) =\left( \mathbf{g%
}_{0}^{(s)},\mathbf{g}_{1}^{(s)}-\mathbf{g}_{0}^{(s)},\mathbf{g}%
_{2}^{(s)}\right) $ if $\Delta _{\min }=\left\| \mathbf{g}_{1}^{\prime (s)}-%
\mathbf{g}_{0}^{\prime (s)}\right\| $;

$\left( \mathbf{f}_{0},\mathbf{f}_{1},\mathbf{f}_{2}\right) =\left( \mathbf{g%
}_{0}^{(s)},\mathbf{g}_{1}^{(s)},\mathbf{g}_{2}^{(s)}-\mathbf{g}%
_{1}^{(s)}\right) $ if $\Delta _{\min }=\left\| \mathbf{g}_{2}^{\prime (s)}-%
\mathbf{g}_{1}^{\prime (s)}\right\| $;

$\left( \mathbf{f}_{0},\mathbf{f}_{1},\mathbf{f}_{2}\right) =\left( \mathbf{g%
}_{0}^{(s)},\mathbf{g}_{1}^{(s)},\mathbf{g}_{2}^{(s)}-\mathbf{g}%
_{0}^{(s)}\right) $ if $\Delta _{\min }=\left\| \mathbf{g}_{2}^{\prime (s)}-%
\mathbf{g}_{0}^{\prime (s)}\right\| $.

$\mathbf{G}^{(s+1)}=\left( \mathbf{g}_{0}^{(s+1)},\mathbf{g}_{1}^{(s+1)},%
\mathbf{g}_{2}^{(s+1)}\right) $ is defined as any of the permutations of the
vectors $\left( \mathbf{f}_{0},\mathbf{f}_{1},\mathbf{f}_{2}\right) $ such
that we have: $\left\Vert \mathbf{g}^{\prime \prime
}{}_{0}^{(s+1)}\right\Vert \leq \left\Vert \mathbf{g}^{\prime \prime
}{}_{1}^{(s+1)}\right\Vert \leq \left\Vert \mathbf{g}^{\prime \prime
}{}_{2}^{(s+1)}\right\Vert $.
\end{definition}

The columns $\mathbf{g}_{0}^{(s)},\mathbf{g}_{1}^{(s)},\mathbf{g}_{2}^{(s)}$
of the matrices $\mathbf{G}^{(s)}$ realize integer approximations of the
plane $\mathbb{P}$. They play the same role in dimension 2 as, in dimension
1, the matrices $\mathbf{G}_{n}$ studied above.

As in dimension 1, the cofactors vector $\mathbf{X}^{\left( s\right) }$ is
fundamental. It's defined by: $\mathbf{X}^{\left( s\right) }=\left\Vert \mathbf{X}\right\Vert \cdot \ ^{%
\text{T}}\left( \left\Vert \mathbf{g}_{0}^{\prime \prime (s)}\right\Vert
,\left\Vert \mathbf{g}_{1}^{\prime \prime (s)}\right\Vert ,\left\Vert 
\mathbf{g}_{2}^{\prime \prime (s)}\right\Vert \right) ,$ and the vector $%
\mathbf{x}^{\left( s\right) }$, the "projective" version of $\ \mathbf{X}%
^{\left( s\right) }$, is defined by $\mathbf{x}^{\left( s\right) }=
\ ^{\textsc{T}}\left( \frac{\left\Vert \mathbf{g}_{0}^{\prime \prime (s)}\right\Vert }{%
\left\Vert \mathbf{g}_{2}^{\prime \prime (s)}\right\Vert },\frac{\left\Vert 
\mathbf{g}_{1}^{\prime \prime (s)}\right\Vert }{\left\Vert \mathbf{g}%
_{2}^{\prime \prime (s)}\right\Vert },1\right) $.

The \emph{Smallest Vector Algorithm} has a \emph{Lagrange} property, which
is expressed by Theorem 1 of subsection 1.1.
The demonstration of this theorem (see below) is essentially the same as in
dimension one, based on a \emph{Dirichlet }result (In dimension two, Theorem 2).

First we introduce the unimodular positive integer matrix $\mathbf{B}^{(s)}$%
, defined as the polar matrix of $\mathbf{G}^{(s)}$ (the transposed matrix
of the inverse of $\mathbf{G}^{(s)}$). We denote by $\mathbf{b}_{0}^{(s)},%
\mathbf{b}_{1}^{(s)},\mathbf{b}_{2}^{(s)}$ the column vectors of $\mathbf{B}%
^{(s)}$ which realize integer approximations of the line $\mathbb{D}$. The
vectors $\mathbf{b}_{0}^{\prime \prime }{}^{(s)}$, $\mathbf{b}_{1}^{\prime
\prime }{}^{(s)}$, $\mathbf{b}_{1}^{\prime \prime }{}^{(s)}$ will be the
orthogonal projections of these vectors on $\mathbb{D}$, and the $\ \mathbf{b%
}_{i}^{\prime (s)}$ their orthogonal projections on $\mathbb{P}$. \ In the
same way are defined the $\mathbf{g}_{1}^{\prime \prime }{}^{(s)}$ and the $%
\mathbf{g}_{i}^{\prime (s)},$ and, for any vector $\mathbf{h}$, the
projections $\mathbf{h}^{\prime }$ and $\mathbf{h}^{\prime \prime }$ of $%
\mathbf{h}$ on $\mathbb{P}$ and $\mathbb{D}$,

\begin{theorem}[Dirichlet Properties]
For each rationally independent triplet $\mathbf{X}$\ of real numbers, with $%
0<x_{0}<x_{1}<x_{2}$, the \emph{Smallest Vector Algorithm }has the following
properties

\textbf{a) }There exists an infinite set $S$ of natural integers such that:%
\textbf{\ }%
\begin{equation*}
\underset{s\in S}{\sup }\left[ \left( \underset{i=0,1,2}{\max }\left\Vert 
\mathbf{g}_{i}^{\prime (s)}\right\Vert \right) ^{2}\left( \underset{i=0,1,2}{%
\min }\left\Vert \mathbf{g}_{i}^{\prime \prime }{}^{(s)}\right\Vert \right) %
\right] <+\infty \text{, with also:}
\end{equation*}%
$\underset{s\rightarrow +\infty ,\text{ }s\in S}{\lim }\left( \underset{%
i=0,1,2}{\min }\left\Vert \mathbf{g}_{i}^{\prime \prime }{}^{(s)}\right\Vert
\right) =0.$ In other words, this algorithm provides a Diophantine
approximation of $\ \mathbb{P}$ which possesses a Dirichlet property, with
the optimal exponent, $2$, and with a form which is rather strong.

\textbf{b)} IF there exists $c>0$ such that for any integer point $\mathbf{%
h\neq 0},$\newline
$\left\Vert \mathbf{h}\right\Vert ^{2}\left\Vert \mathbf{h}^{\prime \prime
}\right\Vert >c$ holds (id est, if the couple $\left( \dfrac{x_{1}}{x_{0}},%
\dfrac{x_{2}}{x_{0}}\right) $ is badly approximable, as it is proved in
Section 3), then the following relation holds with the Smallest Vector
Algorithm, with the same infinite set $S$: 
\begin{equation*}
\underset{s\in S}{\sup }\left[ \left( \underset{i=0,1,2}{\max }\left\Vert 
\mathbf{g}_{i}^{\prime (s)}\right\Vert \right) ^{2}\left( \underset{i=0,1,2}{%
\max }\left\Vert \mathbf{g}_{i}^{\prime \prime }{}^{(s)}\right\Vert \right) %
\right] <+\infty \text{, with also:}
\end{equation*}%
$\underset{s\rightarrow +\infty ,\text{ }s\in S}{\lim }\left( \underset{%
i=0,1,2}{\max }\left\Vert \mathbf{g}_{i}^{\prime \prime }{}^{(s)}\right\Vert
\right) =0$.

\textbf{c)} If again there exists $c>0$ such that for any integer point $%
\mathbf{h\neq 0},$

$\left\Vert \mathbf{h}\right\Vert ^{2}\left\Vert \mathbf{h}^{\prime \prime
}\right\Vert >c$ holds, then%
\begin{equation*}
\underset{s\in S}{\sup }\left[ \left( \underset{i=0,1,2}{\max }\left\Vert 
\mathbf{b}_{i}^{\prime (s)}\right\Vert \right) ^{2}\left( \underset{i=0,1,2}{%
\max }\left\Vert \mathbf{b}_{i}^{\prime \prime }{}^{(s)}\right\Vert \right) %
\right] <+\infty \text{; with also:}
\end{equation*}%
$\underset{s\rightarrow +\infty ,\text{ }s\in S}{\lim }\left( \underset{%
i=0,1,2}{\max }\left\Vert \mathbf{b}_{i}^{\prime (s)}\right\Vert \right) =0$.
\end{theorem}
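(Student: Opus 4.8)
The plan is to exploit the structure of the algorithm: at each step the matrix $\mathbf{G}^{(s+1)}$ is obtained from $\mathbf{G}^{(s)}$ by a single elementary column operation (subtracting one column from another) followed by a permutation, so the determinant of $\mathbf{G}^{(s)}$ stays $\pm 1$ and the associated parallelepiped keeps unit volume. The key geometric input is that the vector $\mathbf{g}_i^{\prime(s)}$ live in the two-dimensional plane $\mathbb{P}$, and the algorithm is designed precisely so that the three differences among the projections $\mathbf{g}_i^{\prime(s)}$ are being reduced. First I would set up the bookkeeping: let $V^{(s)}$ be the area of the triangle (or the parallelogram) spanned in $\mathbb{P}$ by $\mathbf{g}_1^{\prime(s)}-\mathbf{g}_0^{\prime(s)}$ and $\mathbf{g}_2^{\prime(s)}-\mathbf{g}_0^{\prime(s)}$, and relate it to the unit volume of the full parallelepiped via the ``height'' $\min_i \|\mathbf{g}_i^{\prime\prime(s)}\|$ (more precisely, the volume equals, up to a bounded factor, the product of this area with one of the projections onto $\mathbb{D}$). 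Because $\det \mathbf{G}^{(s)}=\pm 1$, this area is bounded below and above in a controlled way, which will be the engine driving all three inequalities.

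Next I would prove part (a). The reduction step decreases $\Delta_{\min}$, but it could in principle stall; the standard trick is to show that the quantity $\Delta_{\min}^{(s)}$ cannot stay bounded below forever unless a loop occurs, and more usefully, to extract an infinite set $S$ of indices where a ``good'' configuration occurs — namely where the two shorter sides of the projected triangle are comparable and the triangle is not too flat. On such a subsequence, $\max_i\|\mathbf{g}_i^{\prime(s)}\|$ is comparable to $\Delta_{\min}^{(s)}$, the projected area $V^{(s)}$ is comparable to $(\Delta_{\min}^{(s)})^2$, and since $V^{(s)} \cdot (\text{something like } \min_i\|\mathbf{g}_i^{\prime\prime(s)}\|) \asymp 1$, we get $(\max_i\|\mathbf{g}_i^{\prime(s)}\|)^2 (\min_i\|\mathbf{g}_i^{\prime\prime(s)}\|) \asymp 1$, hence bounded on $S$. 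The fact that $\min_i\|\mathbf{g}_i^{\prime\prime(s)}\|\to 0$ follows because $\max_i\|\mathbf{g}_i^{\prime(s)}\|\to\infty$ along $S$ (the integer vectors must grow, as they cannot repeat while remaining in a bounded region with $\det=\pm1$ and the projections onto $\mathbb{P}$ filling out). Here the natural tool is Minkowski's theorem on successive minima applied to the symmetric convex body adapted to $\mathbb{P}$ and $\mathbb{D}$, exactly as advertised in the introduction.

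For parts (b) and (c) I would add the hypothesis that $\mathbb{P}$ (equivalently the pair $(x_1/x_0, x_2/x_0)$) is badly approximable, i.e. $\|\mathbf{h}\|^2\|\mathbf{h}^{\prime\prime}\| > c$ for all nonzero integer $\mathbf{h}$. Applied to each column $\mathbf{g}_i^{(s)}$, and using $\|\mathbf{g}_i^{(s)}\| \asymp \|\mathbf{g}_i^{\prime(s)}\|$ when the projection onto $\mathbb{D}$ is small, this gives a \emph{lower} bound $\|\mathbf{g}_i^{\prime(s)}\|^2\|\mathbf{g}_i^{\prime\prime(s)}\| > c'$ for every $i$. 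Combined with the \emph{upper} bound from part (a) on the index $i$ realizing the minimum of $\|\mathbf{g}_i^{\prime\prime(s)}\|$, plus the ordering $\|\mathbf{g}_0^{\prime\prime(s)}\|\le\|\mathbf{g}_1^{\prime\prime(s)}\|\le\|\mathbf{g}_2^{\prime\prime(s)}\|$ built into the algorithm, one squeezes $\max_i\|\mathbf{g}_i^{\prime\prime(s)}\|$ to be comparable to $\min_i\|\mathbf{g}_i^{\prime\prime(s)}\|$ on $S$; substituting into the part (a) bound yields (b). For (c), pass to the polar matrix $\mathbf{B}^{(s)}$: its columns span the dual parallelepiped, so $\|\mathbf{b}_i^{\prime(s)}\|$ and $\|\mathbf{b}_i^{\prime\prime(s)}\|$ are, by the successive-minima / polar-body duality, reciprocal up to bounded factors to the corresponding quantities for the $\mathbf{g}$'s (a standard consequence of Minkowski's second theorem giving $\lambda_i \lambda_{3-i}^{*} \asymp 1$). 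Feeding the estimates from (b) through this duality gives (c).

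The main obstacle I expect is not any single inequality but the extraction and control of the good subsequence $S$ in part (a): one must argue that infinitely often the projected triangle in $\mathbb{P}$ is simultaneously ``balanced'' (its two relevant sides comparable) and ``nondegenerate'' (bounded away from being flat, so that its area is comparable to the square of its diameter), and that off this set the reduction step keeps making genuine progress. This is the place where the precise combinatorial rule of the algorithm — which difference is subtracted, and the reordering by $\|\mathbf{g}_i^{\prime\prime(s)}\|$ — has to be used carefully rather than schematically; everything downstream (the duality arguments, the insertion of the bad-approximability hypothesis) is comparatively mechanical once (a) and its ``balanced configuration'' refinement are in hand.
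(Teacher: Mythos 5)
Your argument for part (a) takes a genuinely different route from the paper's. You decompose $\det \mathbf{G}^{(s)}=\pm 1$ along $\mathbb{P}\oplus\mathbb{D}$: writing $\alpha_i=\|\mathbf{g}_i^{\prime\prime(s)}\|$ and $|A_{jk}|=\|\mathbf{g}_j^{\prime(s)}\wedge\mathbf{g}_k^{\prime(s)}\|$, and using that $\mathbf{0}$ lies inside the projected triangle (the cofactor relation of Lemma 1), one gets the identity $\alpha_0|A_{12}|+\alpha_1|A_{02}|+\alpha_2|A_{01}|=1$, hence $\min_i\alpha_i\cdot A^{(s)}\leq 1$. Combined with the Geometrical Theorem (which gives $A^{(s)}\gtrsim(\max_i\|\mathbf{g}_i^{\prime(s)}\|)^2$ on $S$) this yields part (a) with no appeal to Minkowski's first theorem or the Prism Lemma. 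The paper instead inscribes a cylinder of volume $8$, invokes Minkowski's first theorem to produce a nonzero integer point, and then the first clause of the Prism Lemma to conclude; both arguments rest on the Geometrical Theorem for the balanced subsequence $S$, which you correctly identify as the real work. One small correction: you only have the one-sided inequality $\min_i\alpha_i\cdot A^{(s)}\leq 1$, not ``$\asymp 1$,'' but the one-sided bound is all that part (a) requires.

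Part (b) has a genuine gap. You propose to apply bad approximability to each column $\mathbf{g}_i^{(s)}$ to get $\|\mathbf{g}_i^{\prime(s)}\|^2\|\mathbf{g}_i^{\prime\prime(s)}\|>c'$ and then, combining with the upper bound from (a) on $\|\mathbf{g}_0^{\prime\prime(s)}\|=\min_i\|\mathbf{g}_i^{\prime\prime(s)}\|$, to ``squeeze'' $\max_i\|\mathbf{g}_i^{\prime\prime(s)}\|$ to be comparable to $\min_i\|\mathbf{g}_i^{\prime\prime(s)}\|$. This does not follow: bad approximability only gives a \emph{lower} bound on each $\|\mathbf{g}_i^{\prime\prime(s)}\|$ in terms of $\|\mathbf{g}_i^{\prime(s)}\|$. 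Combined with (a), what you actually extract is comparability of the \emph{horizontal} lengths $\|\mathbf{g}_i^{\prime(s)}\|$ (since $c'/\|\mathbf{g}_0^{\prime(s)}\|^2<\|\mathbf{g}_0^{\prime\prime(s)}\|\leq M/(\max_i\|\mathbf{g}_i^{\prime(s)}\|)^2$ forces $\|\mathbf{g}_0^{\prime(s)}\|\asymp\max_i\|\mathbf{g}_i^{\prime(s)}\|$), but nothing bounds $\|\mathbf{g}_2^{\prime\prime(s)}\|=\max_i\|\mathbf{g}_i^{\prime\prime(s)}\|$ from above. Your determinant identity gives $\alpha_2|A_{01}|\leq 1$, so you would need $|A_{01}|\gtrsim(\max_i\|\mathbf{g}_i^{\prime(s)}\|)^2$, but the Geometrical Theorem controls only the \emph{total} area $A^{(s)}$, not each sub-area $|A_{jk}|$, and $\mathbf{0}$ may sit arbitrarily close to the side $\mathbf{g}_0^{\prime(s)}\mathbf{g}_1^{\prime(s)}$.

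The mechanism that is actually needed for (b) is the one the paper uses and your plan does not supply: Minkowski's \emph{second} theorem on successive minima of the cylinder $\Gamma_R$, together with the last clause of the Prism Lemma. Bad approximability gives $\Lambda_0(\Gamma_R)\geq K_0$, so $\Lambda_0\Lambda_1\Lambda_2\leq 1$ forces $\Lambda_2\leq K_0^{-2}$; thus the dilated cylinder $K_2\Gamma_R$ (with base still inside the hexagon $\mathbf{H}^{\prime(s)}$ thanks to the balance) contains a \emph{free triplet} $(\mathbf{h}_0,\mathbf{h}_1,\mathbf{h}_2)$ of integer vectors. The Prism Lemma's ``free triplet'' conclusion then gives $\|\mathbf{g}_2^{\prime\prime(s)}\|\leq\max_i\|\mathbf{h}_i^{\prime\prime}\|$, which is bounded by the height of that cylinder. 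This is precisely the missing upper bound on $\max_i\|\mathbf{g}_i^{\prime\prime(s)}\|$. Once (b) is secured, your part (c) duality idea is essentially the paper's argument: the cross-product formulas $\mathbf{b}_i^{\prime}=\pm(\mathbf{g}_j^{\prime\prime}\wedge\mathbf{g}_k^{\prime}+\mathbf{g}_j^{\prime}\wedge\mathbf{g}_k^{\prime\prime})$, $\mathbf{b}_i^{\prime\prime}=\pm\,\mathbf{g}_j^{\prime}\wedge\mathbf{g}_k^{\prime}$ make the transfer explicit without invoking polar-body duality abstractly.
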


\begin{theorem}[Geometrical Theorem]
The \emph{Smallest Vector Algorithm} possesses the following property. Let $%
\mathbf{H}^{\prime (s)}$ be the convex hull

$\mathbf{H}^{\prime (s)}=\textsc{conv}\left( \left\{ \mathbf{g}_{0}^{\prime
(s)},\mathbf{g}_{1}^{\prime (s)},\mathbf{g}_{2}^{\prime (s)},-\mathbf{g}%
_{0}^{\prime (s)},-\mathbf{g}_{1}^{\prime (s)}-,\mathbf{g}_{2}^{\prime
(s)}\right\} \right) $ in $\mathbb{P}$. Let $\rho ^{(s)}$ be the radius of
the greatest disk in $\mathbb{P}$ with center $0$ contained in $\mathbf{H}%
^{\prime (s)}$. Then there exists an infinite set $S$ of natural integers
such that:\newline
$\underset{s\in S}{\sup }\dfrac{\underset{i=0,1,2}{\max }\left\Vert \mathbf{g%
}_{i}^{\prime (s)}\right\Vert }{\rho ^{(s)}}<+\infty .$
\end{theorem}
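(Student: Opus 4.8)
The plan is to deduce the Geometrical Theorem from the Dirichlet property (Theorem 2a) together with a volume/determinant identity for the polar matrices $\mathbf{G}^{(s)}$. The key structural fact is that $\mathbf{G}^{(s)}$ is unimodular (it is obtained from the identity by a sequence of elementary column operations and permutations, all of determinant $\pm 1$), so $\left\vert \det \mathbf{G}^{(s)}\right\vert = 1$ for every $s$. Writing each column as $\mathbf{g}_{i}^{(s)} = \mathbf{g}_{i}^{\prime (s)} + \mathbf{g}_{i}^{\prime\prime (s)}$, with $\mathbf{g}_{i}^{\prime (s)}\in\mathbb{P}$ and $\mathbf{g}_{i}^{\prime\prime (s)}\in\mathbb{D}$, and expanding the determinant along the splitting $\mathbb{R}^3=\mathbb{P}\oplus\mathbb{D}$, one gets an identity of the shape
\begin{equation*}
1 = \left\vert \det \mathbf{G}^{(s)}\right\vert = \left\vert \,\det{}_{\mathbb{P}}\!\left(\mathbf{g}_{0}^{\prime\prime (s)},\mathbf{g}_{1}^{\prime (s)},\mathbf{g}_{2}^{\prime (s)}\right) + \det{}_{\mathbb{P}}\!\left(\mathbf{g}_{0}^{\prime (s)},\mathbf{g}_{1}^{\prime\prime (s)},\mathbf{g}_{2}^{\prime (s)}\right) + \det{}_{\mathbb{P}}\!\left(\mathbf{g}_{0}^{\prime (s)},\mathbf{g}_{1}^{\prime (s)},\mathbf{g}_{2}^{\prime\prime (s)}\right)\right\vert ,
\end{equation*}
since the terms with two or three $\mathbb{D}$-components vanish ($\mathbb{D}$ is one-dimensional). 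Each surviving term is, up to the fixed factor $\|\mathbf{X}\|^{-1}$, a length on $\mathbb{D}$ times a $2$-dimensional area in $\mathbb{P}$; so the right-hand side is bounded above by a constant times $\left(\max_i\|\mathbf{g}_{i}^{\prime\prime (s)}\|\right)\left(\max_i\|\mathbf{g}_{i}^{\prime (s)}\|\right)^2$.

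Next I would run this inequality along the infinite set $S$ furnished by Theorem 2a. On that set $\max_i\|\mathbf{g}_{i}^{\prime (s)}\|^2\cdot\min_i\|\mathbf{g}_{i}^{\prime\prime (s)}\|$ is bounded; to convert the $\min$ above into the $\max$ appearing in the determinant identity I would first argue that, after discarding finitely many indices of $S$ if necessary, one of the three projections $\mathbf{g}_{i}^{\prime\prime (s)}$ already controls the others up to a bounded factor — this is exactly the content of the algorithm keeping $\|\mathbf{g}^{\prime\prime}{}_{0}^{(s)}\|\le\|\mathbf{g}^{\prime\prime}{}_{1}^{(s)}\|\le\|\mathbf{g}^{\prime\prime}{}_{2}^{(s)}\|$ and of the relation $\sum_i(\mathbf{g}_{i}^{\prime\prime (s)}\bullet\mathbf{X})\mathbf{b}_{i}^{(s)}=\mathbf{X}$, which forces the cofactors to be comparable in size on the relevant subsequence. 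Combining, one obtains on $S$:
\begin{equation*}
\left(\underset{i}{\max}\,\|\mathbf{g}_{i}^{\prime (s)}\|\right)^2 \le \frac{\mathrm{const}}{\underset{i}{\max}\,\|\mathbf{g}_{i}^{\prime\prime (s)}\|}\cdot\mathrm{const},
\end{equation*}
and since by Theorem 2a $\max_i\|\mathbf{g}_{i}^{\prime\prime (s)}\|\to 0$ along $S$ while $\max_i\|\mathbf{g}_{i}^{\prime (s)}\|\to\infty$, the product $\max_i\|\mathbf{g}_{i}^{\prime (s)}\|\cdot\max_i\|\mathbf{g}_{i}^{\prime\prime (s)}\|$ stays bounded; feeding this back gives $\max_i\|\mathbf{g}_{i}^{\prime (s)}\|^2\cdot\max_i\|\mathbf{g}_{i}^{\prime\prime (s)}\|\le C$ on $S$ as well (this is essentially Theorem 2b's conclusion but obtained here unconditionally along a possibly thinner $S$, which is all we need).

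It remains to pass from this to a lower bound for the inradius $\rho^{(s)}$ of the hexagon $\mathbf{H}^{\prime (s)}=\textsc{conv}\{\pm\mathbf{g}_{0}^{\prime (s)},\pm\mathbf{g}_{1}^{\prime (s)},\pm\mathbf{g}_{2}^{\prime (s)}\}$. Here I would use that $\mathbf{H}^{\prime (s)}$ is a centrally symmetric hexagon in the plane $\mathbb{P}$, and that its area is bounded below: indeed the projected lattice $\pi_{\mathbb{P}}(\mathbb{Z}^3)$ has covolume $\|\mathbf{X}\|/\|\,^{\textsc{T}}(x_0,x_1,x_2)\|$-type constant, and the three vectors $\mathbf{g}_{0}^{\prime (s)},\mathbf{g}_{1}^{\prime (s)},\mathbf{g}_{2}^{\prime (s)}$ span it in the sense that two of them form a basis (this follows from unimodularity of $\mathbf{G}^{(s)}$ and the determinant identity above once the $\mathbb{D}$-components are small), so the hexagon has area comparable to that covolume, hence bounded below by a fixed constant $a_0>0$. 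For a centrally symmetric convex body $K$ in the plane with inradius $\rho$ and circumradius $R$ one has $\mathrm{area}(K)\le 4\rho R$ (the body is contained in the rectangle obtained from the width $2\rho$ in one direction and a diameter $2R$), whence
\begin{equation*}
\rho^{(s)} \ge \frac{\mathrm{area}(\mathbf{H}^{\prime (s)})}{4\,R^{(s)}} \ge \frac{a_0}{4\,\underset{i}{\max}\,\|\mathbf{g}_{i}^{\prime (s)}\|},
\end{equation*}
since the circumradius $R^{(s)}$ of the hexagon equals $\max_i\|\mathbf{g}_{i}^{\prime (s)}\|$. Therefore $\dfrac{\max_i\|\mathbf{g}_{i}^{\prime (s)}\|}{\rho^{(s)}}\le \dfrac{4}{a_0}\left(\max_i\|\mathbf{g}_{i}^{\prime (s)}\|\right)^2$, which by itself is not bounded; so the genuine point is that one must instead bound $\rho^{(s)}$ below by a constant times $\max_i\|\mathbf{g}_{i}^{\prime (s)}\|$ directly, not just its reciprocal. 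To get this I would invoke the Dirichlet input once more: the condition $\max_i\|\mathbf{g}_{i}^{\prime (s)}\|^2\max_i\|\mathbf{g}_{i}^{\prime\prime (s)}\|\le C$ together with the reverse inequality $\max_i\|\mathbf{g}_{i}^{\prime\prime (s)}\|\ge c'/\max_i\|\mathbf{g}_{i}^{\prime (s)}\|^2$ — which comes from Minkowski's theorem applied to a suitable box around $\mathbb{D}$, i.e. the fact that $\mathbb{Z}^3$ cannot have points too close to $\mathbb{D}$ in a box that is too small in the orthogonal directions — pins $\max_i\|\mathbf{g}_{i}^{\prime\prime (s)}\|$ to within bounded ratio of $1/\max_i\|\mathbf{g}_{i}^{\prime (s)}\|^2$ on $S$; then the area identity shows the hexagon, rescaled by $1/\max_i\|\mathbf{g}_{i}^{\prime (s)}\|$, has all three generating vectors of length $\Theta(1)$ and bounded area, and the selection rule $\Delta_{\min}$ of the algorithm — which at each step shortens the longest admissible difference — keeps these three rescaled vectors from becoming too nearly collinear, so the inradius of the rescaled hexagon is bounded below. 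Rescaling back gives $\rho^{(s)}\ge \mathrm{const}\cdot\max_i\|\mathbf{g}_{i}^{\prime (s)}\|$ on $S$, which is the assertion.

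The main obstacle, and the step I expect to require the most care, is the last one: controlling the \emph{shape} (not merely the size) of the hexagon $\mathbf{H}^{\prime (s)}$, i.e. ruling out degeneration of the three projected vectors toward a common line along $S$. The determinant/unimodularity identity bounds the area from below, but an area lower bound alone does not prevent a long thin hexagon; one genuinely needs to exploit the defining minimization in the algorithm (the choice $\Delta_{\min}=\min$ of the three pairwise differences of the $\mathbb{P}$-projections, followed by the subtraction that replaces the "outlying" vector) to argue that the pairwise differences of the $\mathbf{g}_{i}^{\prime (s)}$ cannot all be much larger than the shortest of the $\mathbf{g}_{i}^{\prime (s)}$ themselves, which yields the needed aspect-ratio bound on the subsequence $S$ on which the Dirichlet property is known to hold.
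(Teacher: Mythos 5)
Your proposal is circular. The paper's logical structure is explicitly the reverse of yours: Section~3 \emph{admits} the Geometrical Theorem and derives Theorem~2 (the Dirichlet properties) from it, and Section~4 then proves the Geometrical Theorem \emph{ab initio}, using nothing but the defining rule of the algorithm and elementary plane geometry. In the paper's proof of Theorem~2a), Minkowski's theorem gives an integer point in a cylinder around $\mathbb{D}$, the Prism Lemma compares that point to $\mathbf{g}_0^{(s)}$, and the Geometrical Theorem is the only thing that lets you fit the cylinder's disk inside the hexagon $\mathbf{H}^{\prime(s)}$ in the first place. So you cannot invoke Theorem~2a) to obtain the inradius bound you are trying to prove; without the Geometrical Theorem there is no a priori relation between the size of a Minkowski box and the inradius of the hexagon that the algorithm actually produces.

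Even granting Theorem~2a), the argument has two further gaps. First, Theorem~2a) bounds $\bigl(\max_i\|\mathbf{g}_i^{\prime(s)}\|\bigr)^2\min_i\|\mathbf{g}_i^{\prime\prime(s)}\|$, and you need the analogous bound with $\max_i\|\mathbf{g}_i^{\prime\prime(s)}\|$. Your claim that the cofactor relation $\sum_i(\mathbf{g}_i^{\prime\prime(s)}\bullet\mathbf{X})\mathbf{b}_i^{(s)}=\mathbf{X}$ forces the $\|\mathbf{g}_i^{\prime\prime(s)}\|$ to be comparable on a subsequence is unjustified and is, in fact, precisely the point at which the paper introduces the extra \emph{bad approximability} hypothesis in Theorem~2b); the conversion from $\min$ to $\max$ is not automatic and fails for very well approximable $\mathbf{X}$. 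Second, the ``shape control'' step — ruling out a long thin hexagon — is only gestured at by appealing to the selection rule $\Delta_{\min}$. That is the genuinely difficult part, and the paper devotes the whole of Section~4 to it: the set $T$ of advancing indices, the angular estimates in the ``Geometry on $T$'' lemma, the classification of ``almost flat'' configurations via $T^{\ast}$ and the Flat Triangle Lemma, and finally the Monotonic Subsequence Lemma showing that if all advancing triangles were eventually almost flat then $A^{(s)}/\|\mathbf{g}_{\mathtt{III}}^{\prime(s)}\|^2$ would be nondecreasing on $T$, contradicting the assumption of needling. None of this machinery is circumvented by the determinant identity: a lower bound on the area of the hexagon (which the unimodularity of $\mathbf{G}^{(s)}$ does give you) is compatible with arbitrary elongation and does not bound the inradius from below without an independent control of the aspect ratio.
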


\begin{definition}
If $\ \underset{s\rightarrow +\infty }{\lim \inf }$ $\frac{\underset{i=0,1,2}%
{\max }\left\Vert \mathbf{g}_{i}^{\prime (s)}\right\Vert }{\rho ^{(s)}}%
<+\infty $, with the notations of the above Geometrical Theorem, it will be
said that the algorithm is \emph{balanced}.
\end{definition}

As a \emph{Best Approximation }result, we give our Prism Lemma, which is
proved in Subsection \textbf{3.1}.. It is a very easy result, but it is true
and important for any continued fraction algorithm, and the author has not
seen this statement anywhere in literature.

\textbf{Lemma ( Prism Lemma ).}\textit{\ Let }$\mathbf{g}_{0}^{(s)}$, $%
\mathbf{g}_{1}^{(s)}$, $\mathbf{g}_{2}^{(s)}$ \textit{be the column vectors
of the matrix }$\mathbf{G}^{(s)}$\textit{\ generated at the }$s$\textit{-th
step by the Smallest Vector Algorithm. Let the sets }$\mathbf{H}^{\prime (s)}
$\textit{\ be the convex hulls: }\newline
$\mathbf{H}^{\prime (s)}=\textsc{conv}\left( \mathbf{g}_{0}^{\prime (s)},%
\mathbf{g}_{1}^{\prime (s)},\mathbf{g}_{2}^{\prime (s)},-\mathbf{g}%
_{0}^{\prime (s)},-\mathbf{g}_{1}^{\prime (s)},-\mathbf{g}_{2}^{\prime
(s)}\right) $\textit{\ in }$\mathbb{P}$\textit{\ .}

\textit{We shall omit the indices }$^{(s)}$\textit{. Let }$\mathbf{H}$%
\textit{\ be the prism: }$\mathbf{H}=\mathbf{H}^{\prime }+\mathbb{D}$\textit{%
. Then, with the usual notation }$\mathbf{h}^{\prime \prime }$ \textit{for
the orthogonal projection of }$\mathbf{h}$\textit{\ on }$\mathbb{D}$\textit{:%
}

\textit{-For each non zero integer point }$h$\textit{\ in }$\mathbf{H}$%
\textit{\ , }$\left\Vert \mathbf{h}^{\prime \prime }\right\Vert \geq
\left\Vert \mathbf{g}_{0}^{\prime \prime }\right\Vert $\textit{\ holds.}

\textit{-For each integer point }$\mathbf{h}$\textit{\ in }$\mathbf{H}$%
\textit{\ which is not of the form }$n_{0}\mathbf{g}_{0}$\textit{, with }$%
n_{0}$\textit{\ integer, }$\left\Vert \mathbf{h}^{\prime \prime }\right\Vert
\geq \left\Vert \mathbf{g}_{1}^{\prime \prime }\right\Vert $\textit{\ holds.}

\textit{-For each integer point }$\mathbf{h}$\textit{\ in }$\mathbf{H}$%
\textit{\ \ which is not of the form }$n_{0}\mathbf{g}_{0}+n_{1}\mathbf{g}%
_{1}$\textit{, with }$n_{0}$\textit{\ and }$n_{1}$\textit{\ integers, }$%
\left\Vert \mathbf{h}^{\prime \prime }\right\Vert \geq \left\Vert \mathbf{g}%
_{2}^{\prime \prime }\right\Vert $\textit{\ holds.}

\textit{- That implies that, if }$\left( \mathbf{h}_{0},\mathbf{h}_{1},%
\mathbf{h}_{2}\right) $\textit{\ is a free triplet of integer points in }$%
\mathbf{H}$\textit{,\ then }$\underset{i=0,1,2}{\max }\left( \left\Vert 
\mathbf{h}_{i}^{\prime \prime }\right\Vert \right) \geq \underset{i=0,1,2}{%
\max }\left( \left\Vert \mathbf{g}_{i}^{\prime \prime }\right\Vert \right)
=\left\Vert \mathbf{g}_{2}^{\prime \prime }\right\Vert $\textit{.\smallskip }

The previous theorems suppose that the initial values $\left(
x_{0};x_{1};x_{2}\right) $ of our algorithm are rationally independent. The
reader may wonder what happens when they are not.

\begin{theorem}
The \emph{Smallest Vector Algorithm} finds rational dependence. If the
initial values $\left( x_{0},x_{1},x_{2}\right) $ of our algorithm are
rationally dependent, then the \emph{SVA }generates at some step an integer
triplet $\left( \mathbf{g}_{0}^{(s)},\mathbf{g}_{1}^{(s)},\mathbf{g}%
_{2}^{(s)}\right) $ such that: $\left\Vert \mathbf{g}_{0}^{\prime \prime
(s)}\right\Vert =\mathbf{g}_{0}^{(s)}\bullet \mathbf{X}=0$. This relation
gives the coefficients of the rational (integral) dependence of $\ \left(
x_{0},x_{1},x_{2}\right) $.
\end{theorem}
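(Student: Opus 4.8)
The plan is to argue by contradiction, combining the Dirichlet-type approximation properties of the algorithm with the rigidity forced by an integral relation among the $x_i$. Suppose $(x_0,x_1,x_2)$ is rationally dependent, so there is a nonzero integer vector $\mathbf{m}=\ ^{\textsc{T}}(m_0,m_1,m_2)\in\mathbb{Z}^3$ with $\mathbf{m}\bullet\mathbf{X}=0$; equivalently $\mathbf{m}\in\mathbb{P}$ (its projection $\mathbf{m}''$ on $\mathbb{D}$ is zero). Since $\mathbf{X}$ has positive, strictly increasing entries, one checks that $\mathbf{m}$ cannot be proportional to any coordinate vector, so $\mathbf{m}$ spans a genuine rational line inside the plane $\mathbb{P}$. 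The claim to be proved is that at some step $s$ one of the generated vectors $\mathbf{g}_i^{(s)}$ satisfies $\mathbf{g}_i^{(s)}\bullet\mathbf{X}=0$, i.e. lies on $\mathbb{P}$; by the ordering $\|\mathbf{g}_0''^{(s)}\|\le\|\mathbf{g}_1''^{(s)}\|\le\|\mathbf{g}_2''^{(s)}\|$ and the normalization $\mathbf{g}_0''\bullet\mathbf{X}=\|\mathbf{X}\|\,\|\mathbf{g}_0''^{(s)}\|$, it suffices to force $\mathbf{g}_0^{(s)}\in\mathbb{P}$.

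First I would record that the algorithm never terminates on its own unless such a vanishing occurs, so we may run it indefinitely and invoke Theorem 2. Part (a) of Theorem 2 gives an infinite set $S$ with $\min_i\|\mathbf{g}_i''^{(s)}\|\to 0$ along $S$ while $(\max_i\|\mathbf{g}_i'^{(s)}\|)^2\min_i\|\mathbf{g}_i''^{(s)}\|$ stays bounded; since the minimum is attained by $i=0$, this says $\|\mathbf{g}_0''^{(s)}\|\to0$ and $\|\mathbf{g}_0'^{(s)}\|$ grows at most like $\|\mathbf{g}_0''^{(s)}\|^{-1/2}$, hence $\|\mathbf{g}_0^{(s)}\|\le C\,\|\mathbf{g}_0''^{(s)}\|^{-1/2}$ for $s\in S$. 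Now apply the Prism Lemma at such a step $s$: the prism $\mathbf{H}^{(s)}=\mathbf{H}'^{(s)}+\mathbb{D}$ is an infinite slab, but the relevant point is that any integer point $\mathbf{h}$ lying in $\mathbf{H}^{(s)}$ has $\|\mathbf{h}''\|\ge\|\mathbf{g}_0''^{(s)}\|$. Consider the lattice line $\mathbb{Z}\mathbf{m}\subset\mathbb{P}$: since $\mathbf{m}\in\mathbb{P}=\mathbb{D}^\perp$ we have $\mathbf{m}''=0$, so $\|\mathbf{m}''\|=0<\|\mathbf{g}_0''^{(s)}\|$ for every $s$ at which $\mathbf{g}_0$ has not yet hit $\mathbb{P}$. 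Hence, by the Prism Lemma, no nonzero multiple of $\mathbf{m}$ can lie in $\mathbf{H}^{(s)}$; equivalently, the cross-section $\mathbf{H}'^{(s)}$, viewed inside the plane $\mathbb{P}$, contains no nonzero lattice point of the rank-one lattice $\mathbb{Z}\mathbf{m}$.

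The crux is then a lattice-geometry contradiction in the plane $\mathbb{P}$. The convex symmetric hexagon $\mathbf{H}'^{(s)}=\textsc{conv}(\pm\mathbf{g}_0'^{(s)},\pm\mathbf{g}_1'^{(s)},\pm\mathbf{g}_2'^{(s)})$ has "inradius'' comparable to $\rho^{(s)}$ and, because $(\mathbf{g}_0^{(s)},\mathbf{g}_1^{(s)},\mathbf{g}_2^{(s)})$ is a unimodular basis of $\mathbb{Z}^3$, its area in $\mathbb{P}$ is comparable to $\|\mathbf{g}_0'^{(s)}\|\cdot\|\mathbf{g}_0''^{(s)}\|^{-1}$ up to bounded factors coming from the three $\mathbb{P}$-projections (this uses $\det\mathbf{G}^{(s)}=\pm1$ and the fact that the $\mathbb{D}$-components of $\mathbf{g}_1^{(s)},\mathbf{g}_2^{(s)}$ are controlled by $\|\mathbf{g}_1''^{(s)}\|,\|\mathbf{g}_2''^{(s)}\|$, which the algorithm keeps small along $S$). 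Thus the area of $\mathbf{H}'^{(s)}$ tends to $+\infty$ along $S$. But a convex symmetric region in the plane $\mathbb{P}$ of area exceeding $4\det(\mathbb{Z}\mathbf{m})$ along the direction $\mathbb{R}\mathbf{m}$—more precisely, one whose width in the direction of $\mathbb{R}\mathbf{m}^{\perp\cap\mathbb{P}}$ is bounded while its extent along $\mathbb{R}\mathbf{m}$ is unbounded—must eventually contain a nonzero point of $\mathbb{Z}\mathbf{m}$ (Minkowski, or simply the pigeonhole that $\mathbf{H}'^{(s)}$ eventually contains a segment of length $>\|\mathbf{m}\|$ along $\mathbb{R}\mathbf{m}$). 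That contradicts the conclusion of the previous paragraph, unless at some finite step $\mathbf{g}_0''^{(s)}=0$. I expect the main obstacle to be making this last dichotomy clean: one must verify that the hexagon $\mathbf{H}'^{(s)}$ really does elongate in the $\mathbb{R}\mathbf{m}$-direction (and not merely grow in area while staying thin in every lattice direction of $\mathbb{Z}\mathbf{m}$)—this is where one pays back the hypothesis that $\mathbf{m}\in\mathbb{P}$, since $\mathbb{Z}\mathbf{m}$ is exactly the set of integer points with zero $\mathbb{D}$-component and the Prism Lemma's conclusion $\|\mathbf{h}''\|\ge\|\mathbf{g}_0''^{(s)}\|>0$ excludes precisely those points from $\mathbf{H}^{(s)}$. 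Once the vanishing $\mathbf{g}_0^{(s)}\bullet\mathbf{X}=\|\mathbf{X}\|\,\|\mathbf{g}_0''^{(s)}\|=0$ is established, the integer vector $\mathbf{g}_0^{(s)}$ furnishes the asserted integral dependence $m_0 x_0+m_1 x_1+m_2 x_2=0$ with $(m_0,m_1,m_2)=\mathbf{g}_0^{(s)}$, which completes the proof.
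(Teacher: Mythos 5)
You correctly identify the right ingredients (the Prism Lemma applied to the fixed integer vector $\mathbf{m}\in\mathbb{P}$ with $\mathbf{m}''=\mathbf{0}$), but the argument has a genuine gap which you yourself flag and never close: you cannot conclude from area growth of $\mathbf{H}'^{(s)}$ alone that the hexagon eventually swallows a nonzero multiple of $\mathbf{m}$, since a long thin hexagon of unbounded area could in principle stay entirely between the lines $\pm\mathbf{m}+\mathbb{R}\mathbf{m}^{\perp}$. Your asserted area estimate ``comparable to $\|\mathbf{g}_0'^{(s)}\|\cdot\|\mathbf{g}_0''^{(s)}\|^{-1}$'' is also unjustified. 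Moreover, you invoke Theorem 2, which is stated only for \emph{rationally independent} $\mathbf{X}$; the paper deliberately avoids it here and notes explicitly that only Lemma 2, the Geometrical Theorem, and the Prism Lemma have proofs that remain valid in the dependent case.

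The missing piece is exactly the Geometrical Theorem (Theorem 3), which you mention in passing (``inradius comparable to $\rho^{(s)}$'') but never use. It asserts that $\liminf_{s}\frac{\max_i\|\mathbf{g}_i'^{(s)}\|}{\rho^{(s)}}<+\infty$ along an infinite set $S$. Combined with Lemma 2, $\max_i\|\mathbf{g}_i'^{(s)}\|\to+\infty$, this yields $\limsup_s\rho^{(s)}=+\infty$. Since the disk of radius $\rho^{(s)}$ about $\mathbf{0}$ in $\mathbb{P}$ is contained in $\mathbf{H}'^{(s)}$, once $\rho^{(s)}>\|\mathbf{m}\|$ the fixed vector $\mathbf{m}$ lies in $\mathbf{H}'^{(s)}\subset\mathbf{H}^{(s)}$, and the Prism Lemma gives $0=\|\mathbf{m}''\|\geq\|\mathbf{g}_0''^{(s)}\|\geq 0$, forcing $\mathbf{g}_0^{(s)}\bullet\mathbf{X}=0$. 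This is precisely the paper's proof; it is direct, not by contradiction, and sidesteps entirely the ``does it elongate in the $\mathbb{R}\mathbf{m}$ direction'' issue because the inradius itself diverges. Replacing your area heuristics with this use of the Geometrical Theorem completes the argument.
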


The proof uses Lemma 2 in the next subsection, the Geometrical Theorem and the Prism Lemma. The demonstrations of these three results are still valid if $\ \left(
x_{0},x_{1},x_{2}\right) $  is rationally dependent. With these three results, the proof of Theorem 4 is very short.

\begin{proof}
Let's suppose that we have an integral dependence relation, of the shape $%
\mathbf{h\bullet X}=0$, $\mathbf{h}$ being a non null integer vector. By the
Lemma 2 of the next subsection: $\underset{s\rightarrow +\infty }{\lim }%
\left( \underset{i=0,1,2}{\max }\left\Vert \mathbf{g}_{i}^{\prime
(s)}\right\Vert \right) =+\infty $. In addition, by the Geometrical Theorem
above, $\underset{s\rightarrow +\infty }{\lim \inf }$ $\frac{\underset{%
i=0,1,2}{\max }\left\Vert \mathbf{g}_{i}^{\prime (s)}\right\Vert }{\rho
^{(s)}}<+\infty $. Then $\underset{%
s\rightarrow +\infty }{\lim \sup }\, \rho ^{(s)}=+\infty $. Then $\mathbf{h}$
belongs to some hexagon $\mathbf{H}^{\prime (s)}$ defined as a convex hull
in the Prism Lemma just above. By this Prism Lemma, $0=\left\Vert \mathbf{h}%
^{\prime \prime }\right\Vert \geq \left\Vert \mathbf{g}_{0}^{\prime \prime
}\right\Vert =\dfrac{\mathbf{X\bullet g}_{0}^{(s)}}{\left\Vert \mathbf{X}%
\right\Vert }\geq 0$ holds. Then $\mathbf{X\bullet g}_{0}^{(s)}=0$.
\end{proof}

\subsection{Numerical examples} 
We give two examples of Lagrange Loops when the initial values are in
some cubic field.

\begin{example}
$\left( x_{0};x_{1};x_{2}\right) =\left( 1;2\cos \left( \pi /7\right) ;4\cos
^{2}\left( \pi /7\right) \right) $. The sequence of the\newline
$\left( \frac{x_{0}^{\left( s\right) }}{x_{2}^{\left( s\right) }};\frac{%
x_{1}^{\left( s\right) }}{x_{2}^{\left( s\right) }}\right) $ is the simplest
the author has met. Almost every couple in it repeats infinitely. We have: $%
x_{1}^{3}-x_{1}^{2}-2x_{1}+1=0$, and\newline
$\left( x_{2}-2\right) ^{3}+\left( x_{2}-2\right) ^{2}-2\left(
x_{2}-2\right) -1=0$; the algorithm provides the following $\left(
s+1,\left( \frac{x_{0}^{\left( s\right) }}{x_{2}^{\left( s\right) }};\frac{%
x_{1}^{\left( s\right) }}{x_{2}^{\left( s\right) }}\right) \right) $.
\end{example}

$\{1,\{1.80193773580484,\qquad 3.24697960371747\}\ \}$

$\{2,\{1.80193773580484,\qquad 2.24697960371747\}\ \}$

$\{3,\{1.24697960371747,\qquad 2.80193773580484\}\ \}$

$\{4,\{1.24697960371747,\qquad 1.80193773580484\}\ \}$

$\{5,\{1.80193773580484,\qquad 2.24697960371747\}\ \}$

$.../...$

$\{292,\{1.24697960371747,\qquad 1.80193773580484\}\}$

$\{293,\{1.80193773580484,\qquad 2.24697960371747\}\}$

$\{294,\{1.24697960371747,\qquad 1.80193773580484\}\}$

$\{295,\{1.24697960371747,\qquad 1.55495813208737\}\}$

$\{296,\{1.80193773580484,\qquad 2.24697960371747\}\}$

$\{297,\{1.24697960371747,\qquad 1.80193773580484\}\}$

$\{298,\{1.80193773580484,\qquad 2.24697960371747\}\}$

$\{299,\{1.24697960371747,\qquad 2.80193773580484\}\}$

$\{300,\{1.24697960371747,\qquad 1.80193773580484\}\}$

\begin{example}
$\left( x_{0};x_{1};x_{2}\right) =\left( 1;\sqrt[3]{13};\sqrt[3]{13^{2}}%
\right) $. The algorithm provides the following sequence $\left( s+1,\left( 
\frac{x_{0}^{\left( s\right) }}{x_{2}^{\left( s\right) }};\frac{%
x_{1}^{\left( s\right) }}{x_{2}^{\left( s\right) }}\right) \right) $. Here
there are very few repetitions (loops), but they exist.
\end{example}

$\{1,\{2.35133468772075748950001,\qquad 5.5287748136788721414723\}\}$

$\{2,\{2.35133468772075748950001,\qquad 4.5287748136788721414723\}\}$

$\{3,\{1.35133468772075748950001,\qquad 4.5287748136788721414723\}\}$

$.../...$

$\{104,\{2.35133468772075748950001,\qquad 5.528774813678872141472\}\}$

$\{105,\{2.35133468772075748950001,\qquad 4.528774813678872141472\}\}$

$\{106,\{1.35133468772075748950001,\qquad 4.528774813678872141472\}\}$

$../...$

$\{160,\{5.5057084068852398563646,\qquad 47.82563987973201144317\}\}$

$.../...$

$\{219,\{1.35133468772075748950001,\qquad 4.5287748136788721414723\}\}$

$.../...$

$\{308,\{2.35133468772075748950001,\qquad 5.5287748136788721414723\}\}$

$.../...$

$\{411,\{2.35133468772075748950001,\qquad 5.5287748136788721414723\}\}$

\subsection{PLAN of the paper}

We shall prove the Results above in the order in which they are written in
this first section, and, in fact, in the reverse of the logical order.

\begin{itemize}
\item In the\textbf{\ second section}, we admit that the Smallest Vector
Algorithm or \emph{SVA} has the Dirichlet Properties of Theorem 2, and we
prove that this implies the Lagrange properties. But this demonstration is
made \textbf{only in a particular case}, namely $\mathbf{X}=\left( 1,\sqrt[3]%
{N},\sqrt[3]{N^{2}}\right) $ with $N$ natural.\textbf{\ The complete proof}
of the Lagrange Property for three numbers in a cubic number field is given
in \textbf{Section 5}.\ But this general demonstration is intricate, and the
particular case gives all the main ideas involved. That's why we prefer to
begin with this particular case, for more clarity.

\item In the\textbf{\ third section}, we admit the geometrical property of
the SVA, namely that it is balanced. We prove that this property implies the
Dirichlet Properties.

\item In the \textbf{fourth section}, we prove the Geometrical Theorem,
namely that the SVA is balanced.

\item In the\textbf{\ fifth section}, we give the general demonstration of
the Lagrange Theorem. Then all theorems are proved.

\item In \textbf{Section 6}, we locate the results of this paper in relation
to the main themes in Multidimensional Dimensional Fractions Theory and
Homogeneous Diophantine Approximation, with some bibliographical references.
\end{itemize}

\section{Demonstration of Lagrange Properties (particular case)}

\subsection{Four basic Lemmas}

We're going to need the following lemmas.

\begin{lemma}[Basic Properties of Brentjes' Algorithms]
\textbf{a) }Inductively, by the way of building the Smallest Vector
Algorithm, we have the following properties: \newline
$0\leq \mathbf{X\bullet g}_{0}^{(s)}=\left\Vert \mathbf{X}\right\Vert
\left\Vert \mathbf{g}_{0}^{\prime \prime }{}^{(s)}\right\Vert \leq \mathbf{%
X\bullet g}_{1}^{(s)}=\left\Vert \mathbf{X}\right\Vert \left\Vert \mathbf{g}%
_{1}^{\prime \prime }{}^{(s)}\right\Vert \leq \mathbf{X\bullet g}%
_{2}^{(s)}=\left\Vert \mathbf{X}\right\Vert \left\Vert \mathbf{g}%
_{2}^{\prime \prime }{}^{(s)}\right\Vert $.\newline
\textbf{b) }The following equality holds for every $s\in 
\mathbb{N}
$: 
\begin{equation*}
\left\Vert \mathbf{X}\right\Vert \left\Vert \mathbf{g}_{0}^{\prime \prime
}{}^{(s)}\right\Vert \cdot \mathbf{b}_{0}^{(s)}+\left\Vert \mathbf{X}%
\right\Vert \left\Vert \mathbf{g}_{1}^{\prime \prime }{}^{(s)}\right\Vert
\cdot \mathbf{b}_{1}^{(s)}+\left\Vert \mathbf{X}\right\Vert \left\Vert 
\mathbf{g}_{2}^{\prime \prime }{}^{(s)}\right\Vert \cdot \mathbf{b}%
_{2}^{(s)}=\mathbf{X}
\end{equation*}%
That's the reason why $\ \left\Vert \mathbf{X}\right\Vert \left\Vert \mathbf{%
g}_{0}^{\prime \prime }{}^{(s)}\right\Vert ,$ $\left\Vert \mathbf{X}%
\right\Vert \left\Vert \mathbf{g}_{1}^{\prime \prime }{}^{(s)}\right\Vert ,$ 
$\left\Vert \mathbf{X}\right\Vert \left\Vert \mathbf{g}_{1}^{\prime \prime
}{}^{(s)}\right\Vert $ are called \emph{cofactors. }\newline
\textbf{c) }For every $s\in 
\mathbb{N}
$: $~^{\textsc{T}}\mathbf{G}^{(s)}\mathbf{X}=\mathbf{X}^{\left( s\right) }$
or, which is the same:\newline
 $\left( \mathbf{B}^{(s)}\right) ^{-1}\mathbf{X}=%
\mathbf{X}^{\left( s\right) }$.
\end{lemma}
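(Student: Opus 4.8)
The plan is to prove part \textbf{a)} by induction on $s$, and then obtain parts \textbf{c)} and \textbf{b)} as immediate consequences of \textbf{a)} together with the definition $\mathbf{B}^{(s)}=\left({}^{\textsc{T}}\mathbf{G}^{(s)}\right)^{-1}$.

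First I would record the elementary geometric identity underlying everything: since $\mathbf{g}_i^{\prime\prime(s)}$ is the orthogonal projection of $\mathbf{g}_i^{(s)}$ on the line $\mathbb{D}=\mathbb{R}\mathbf{X}$, one has $\mathbf{g}_i^{\prime\prime(s)}=\frac{\mathbf{X}\bullet\mathbf{g}_i^{(s)}}{\|\mathbf{X}\|^2}\,\mathbf{X}$, whence $\|\mathbf{X}\|\,\|\mathbf{g}_i^{\prime\prime(s)}\|=\left|\mathbf{X}\bullet\mathbf{g}_i^{(s)}\right|$. Thus part \textbf{a)} amounts to showing that each scalar product $\mathbf{X}\bullet\mathbf{g}_i^{(s)}$ is nonnegative and nondecreasing in $i$. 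For the base case $s=0$, $\mathbf{G}^{(0)}=\mathbf{I}$ gives $\mathbf{X}\bullet\mathbf{g}_i^{(0)}=x_i$, and $0<x_0<x_1<x_2$ is exactly the hypothesis. For the inductive step I would run through the three cases in the definition of the intermediate triplet $(\mathbf{f}_0,\mathbf{f}_1,\mathbf{f}_2)$: in each of them the only vector that changes is a difference $\mathbf{g}_j^{(s)}-\mathbf{g}_k^{(s)}$ with $j>k$, so its cofactor $\mathbf{X}\bullet(\mathbf{g}_j^{(s)}-\mathbf{g}_k^{(s)})=\mathbf{X}\bullet\mathbf{g}_j^{(s)}-\mathbf{X}\bullet\mathbf{g}_k^{(s)}$ is $\geq 0$ by the induction hypothesis; hence all three cofactors $\mathbf{X}\bullet\mathbf{f}_i$ are nonnegative. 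Consequently $\|\mathbf{X}\|\,\|\mathbf{f}_i^{\prime\prime}\|=\mathbf{X}\bullet\mathbf{f}_i$ for each $i$, so the permutation of $(\mathbf{f}_0,\mathbf{f}_1,\mathbf{f}_2)$ prescribed by the algorithm — the one making $\|\mathbf{g}_0^{\prime\prime(s+1)}\|\leq\|\mathbf{g}_1^{\prime\prime(s+1)}\|\leq\|\mathbf{g}_2^{\prime\prime(s+1)}\|$ — is precisely the one sorting the cofactors increasingly, which closes the induction.

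Along the way I would note (same induction, or a one-line remark) that $\det\mathbf{G}^{(s)}=\pm 1$: this holds at $s=0$, each subtractive move is an elementary column operation of determinant $1$, and the final permutation has determinant $\pm 1$. Hence $\mathbf{G}^{(s)}\in\mathrm{GL}_3(\mathbb{Z})$ and $\mathbf{B}^{(s)}=\left({}^{\textsc{T}}\mathbf{G}^{(s)}\right)^{-1}$ is a well-defined unimodular integer matrix, with $\left(\mathbf{B}^{(s)}\right)^{-1}={}^{\textsc{T}}\mathbf{G}^{(s)}$. Part \textbf{c)} is then immediate: the $i$-th coordinate of ${}^{\textsc{T}}\mathbf{G}^{(s)}\mathbf{X}$ is $\mathbf{g}_i^{(s)}\bullet\mathbf{X}=\|\mathbf{X}\|\,\|\mathbf{g}_i^{\prime\prime(s)}\|$ by \textbf{a)}, which is exactly the $i$-th coordinate of $\mathbf{X}^{(s)}$; and $\left(\mathbf{B}^{(s)}\right)^{-1}\mathbf{X}={}^{\textsc{T}}\mathbf{G}^{(s)}\mathbf{X}=\mathbf{X}^{(s)}$. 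Part \textbf{b)} follows by applying $\mathbf{B}^{(s)}$ to both sides of \textbf{c)}: $\mathbf{X}=\mathbf{B}^{(s)}\mathbf{X}^{(s)}=\sum_{i=0}^{2}\|\mathbf{X}\|\,\|\mathbf{g}_i^{\prime\prime(s)}\|\,\mathbf{b}_i^{(s)}$ (equivalently, $\{\mathbf{b}_i^{(s)}\}$ and $\{\mathbf{g}_i^{(s)}\}$ are dual bases since ${}^{\textsc{T}}\mathbf{B}^{(s)}\mathbf{G}^{(s)}=\mathbf{I}$, so expanding $\mathbf{X}$ in the basis $\{\mathbf{b}_i^{(s)}\}$ yields the coefficients $\mathbf{X}\bullet\mathbf{g}_i^{(s)}$).

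There is no serious obstacle here; this is a bookkeeping lemma. The only point deserving care is the inductive step of \textbf{a)}: one must check that in every one of the three branches the modified vector is a difference of the form ``larger index minus smaller index'', so that monotonicity of the cofactors is exactly what forces nonnegativity after subtraction, and that the precise sign information $\|\mathbf{X}\|\,\|\mathbf{g}_i^{\prime\prime(s)}\|=+\,\mathbf{X}\bullet\mathbf{g}_i^{(s)}$ (not merely $\pm$) is what makes the sorting-by-norm permutation coincide with the sorting-by-cofactor. I would also record that this argument uses only $0<x_0<x_1<x_2$ and never the rational independence of $\mathbf{X}$, since Theorem 4 invokes this lemma in the rationally dependent case.
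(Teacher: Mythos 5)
Your proof is correct and follows essentially the same route as the paper: induction on $s$ for part \textbf{a)}, noting that in each of the three branches the subtracted vector has a nonnegative cofactor because the algorithm only ever forms ``larger cofactor minus smaller cofactor,'' and then parts \textbf{b)} and \textbf{c)} as duality bookkeeping via $\left({}^{\textsc{T}}\mathbf{G}^{(s)}\right)^{-1}=\mathbf{B}^{(s)}$. The only (inessential) difference is that you derive \textbf{c)} first and obtain \textbf{b)} by applying $\mathbf{B}^{(s)}$, whereas the paper proves \textbf{b)} directly by pairing both sides with each $\mathbf{g}_i^{(s)}$ and then remarks that \textbf{c)} is immediate; you are, if anything, slightly more explicit than the paper about why each of the three cases preserves positivity, and your closing remark that rational independence is nowhere used is accurate and worth having, since the paper invokes this lemma in the dependent case for Theorem 4.
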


\begin{proof}
First of all, the a) Property is true at the step $s=0$. Let's suppose it's
true at the step $s$. The construction of the new $\mathbf{g}_{j}^{(s+1)}$
by subtraction is always done in accordance with the order of \ the $\mathbf{%
X\bullet g}_{i}^{(s)}=\left\Vert \mathbf{X}\right\Vert \left\Vert \mathbf{g}%
_{i}^{\prime \prime }{}^{(s)}\right\Vert $. Then, at the next step, each of
the $\mathbf{X\bullet g}_{i}^{(s+1)}$ is positive, and then equals $%
\left\Vert \mathbf{X}\right\Vert \left\Vert \mathbf{g}_{i}^{\prime \prime
}{}^{(s+1)}\right\Vert $. The rule of the algorithm is to order these
numbers at step $\left( s+1\right) $. Then the property is obtained at step $%
\left( s+1\right) $, and \ a) is true by induction. For the b) property, the equality\newline
$\left( \mathbf{X\bullet g}_{0}^{(s)}\right) \cdot \mathbf{b}%
_{0}^{(s)}+\left( \mathbf{X\bullet g}_{1}^{(s)}\right) \cdot \mathbf{b}%
_{1}^{(s)}+\left( \mathbf{X\bullet g}_{2}^{(s)}\right) \cdot \mathbf{b}%
_{2}^{(s)}=\mathbf{X}$ \ holds. To see that, make the scalar product of both
the left-hand and the right-hand vector of the equality with each of the $%
\mathbf{g}_{i}^{(s)}$.\newline 
This equality is the same as the equality in b).

Because $\mathbf{X}^{\left( s\right) }=~^{\textsc{T}}\left( \mathbf{%
X\bullet g}_{0}^{(s)},\mathbf{X\bullet g}_{1}^{(s)},\mathbf{X\bullet g}%
_{2}^{(s)}\right) $, the equalities of c) are obvious.
\end{proof}

\begin{lemma}
For the Smallest Vector Algorithm, and more generally in any Brentjes'
algorithm, we have: $\underset{s\rightarrow +\infty }{\lim }\left( \underset{%
i=0,1,2}{\max }\left\Vert \mathbf{g}_{i}^{\prime (s)}\right\Vert \right)
=+\infty $.
\end{lemma}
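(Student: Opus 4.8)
The plan is to attach to the matrices $\mathbf{G}^{(s)}$ a quantity that strictly decreases at every step, conclude that these matrices are pairwise distinct, finish the growth of the $\|\mathbf{g}_i^{(s)}\|$ by a pigeonhole argument on integer matrices with bounded entries, and finally transfer that growth from the vectors to their projections onto $\mathbb{P}$.

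First I would introduce the monovariant
\[
\Phi(s) \;=\; \mathbf{X}\bullet\bigl(\mathbf{g}_0^{(s)}+\mathbf{g}_1^{(s)}+\mathbf{g}_2^{(s)}\bigr)
\;=\; \|\mathbf{X}\|\,\bigl(\|\mathbf{g}_0^{\prime\prime(s)}\|+\|\mathbf{g}_1^{\prime\prime(s)}\|+\|\mathbf{g}_2^{\prime\prime(s)}\|\bigr),
\]
the second equality being Lemma 1 a). The passage from step $s$ to step $s+1$ of the Smallest Vector Algorithm — and, more generally, of any Brentjes algorithm — replaces exactly one column $\mathbf{g}_j^{(s)}$ by $\mathbf{g}_j^{(s)}-\mathbf{g}_i^{(s)}$ with $i<j$, and then permutes; going through the three cases of the Definition one sees that the subtracted index $i$ always lies in $\{0,1\}$. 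Hence $\Phi(s+1)=\Phi(s)-\mathbf{X}\bullet\mathbf{g}_i^{(s)}$. Now $\mathbf{G}^{(s)}$ is unimodular (by an immediate induction: each step is an elementary column operation followed by a permutation, starting from $\mathbf{G}^{(0)}=\mathbf{I}$), so $\mathbf{g}_i^{(s)}\neq 0$; since $\mathbf{X}$ is rationally independent this forces $\mathbf{X}\bullet\mathbf{g}_i^{(s)}\neq 0$, and Lemma 1 a) gives $\mathbf{X}\bullet\mathbf{g}_i^{(s)}\geq 0$. Therefore $\mathbf{X}\bullet\mathbf{g}_i^{(s)}>0$ and $\Phi$ is \emph{strictly} decreasing. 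Comparing in the same way the three possible multisets of new cofactors, namely $\{c_0,c_1-c_0,c_2\}$, $\{c_0,c_1,c_2-c_1\}$, $\{c_0,c_1,c_2-c_0\}$, where $c_i=\mathbf{X}\bullet\mathbf{g}_i^{(s)}$ and $0\leq c_0\leq c_1\leq c_2$, one also sees that the largest cofactor $\mathbf{X}\bullet\mathbf{g}_2^{(s)}$ never increases, so that $\|\mathbf{g}_i^{\prime\prime(s)}\|\leq x_2/\|\mathbf{X}\|$ for every $i$ and every $s$.

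Next, if two matrices $\mathbf{G}^{(s_1)}$ and $\mathbf{G}^{(s_2)}$ had the same columns for some $s_1<s_2$, then $\Phi(s_1)=\Phi(s_2)$, contradicting the strict monotonicity of $\Phi$; hence the matrices $\mathbf{G}^{(s)}$, $s\in\mathbb{N}$, are pairwise distinct. Since for every $M$ there are only finitely many $3\times 3$ integer matrices all of whose entries have absolute value $\leq M$, it follows that $\max_{i,j}|(\mathbf{G}^{(s)})_{i,j}|\to +\infty$, that is, $\max_{i=0,1,2}\|\mathbf{g}_i^{(s)}\|\to +\infty$ as $s\to\infty$. Finally, choosing at each $s$ an index $i(s)$ realizing this maximum and using $\|\mathbf{g}_{i(s)}^{(s)}\|^2=\|\mathbf{g}_{i(s)}^{\prime(s)}\|^2+\|\mathbf{g}_{i(s)}^{\prime\prime(s)}\|^2$ together with the uniform bound just obtained,
\[
\max_{i=0,1,2}\|\mathbf{g}_i^{\prime(s)}\|^2 \;\geq\; \|\mathbf{g}_{i(s)}^{\prime(s)}\|^2 \;=\; \|\mathbf{g}_{i(s)}^{(s)}\|^2-\|\mathbf{g}_{i(s)}^{\prime\prime(s)}\|^2 \;\geq\; \max_{i=0,1,2}\|\mathbf{g}_i^{(s)}\|^2-\frac{x_2^2}{\|\mathbf{X}\|^2}\;\longrightarrow\;+\infty,
\]
which is the assertion. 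Note that I only use strict monotonicity of $\Phi$, not its limit.

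The one delicate point is the strict decrease of $\Phi$, i.e. the assertion that the column subtracted at each step has a strictly positive cofactor $\mathbf{X}\bullet\mathbf{g}_i^{(s)}$; this is exactly where the rational independence of $\mathbf{X}$ enters. Everything else is elementary bookkeeping, and the argument applies verbatim to any Brentjes-type algorithm, since such algorithms all proceed by subtracting a column of smaller cofactor from one of larger cofactor, so that the same $\Phi$ is a strict monovariant there as well.
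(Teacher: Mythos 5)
Your proof is correct and follows essentially the same route as the paper: bound the cofactors $\|\mathbf{g}_i^{\prime\prime(s)}\|$, use pigeonhole on integer matrices, and rule out repetition by the subtractive structure. What you add is welcome precision — the paper twice invokes ``the subtractive nature of these algorithms'' without elaboration, whereas you make both uses explicit, via the non-increasing maximal cofactor (giving the bound on $\|\mathbf{g}_i^{\prime\prime(s)}\|$) and via the strictly decreasing monovariant $\Phi$ (giving pairwise distinctness of the $\mathbf{G}^{(s)}$), the latter being exactly where the rational independence of $\mathbf{X}$ is consumed.
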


\begin{proof}
If this limit does not hold, there exist a real number $M$ and an infinite
set $T$ such that for any $s\in T$, and for $i=0,1,2$, \ $\left\Vert \mathbf{%
g}_{i}^{\prime (s)}\right\Vert \leq M$ \ holds. In addition, by the
subtractive nature of these algorithms, the $\left\Vert \mathbf{g}%
_{i}^{\prime \prime }{}^{(s)}\right\Vert $, $i=0,1,2$ are also bounded. Then
the set of the triplets $\left( \mathbf{g}_{0}^{(s)},\mathbf{g}_{1}^{(s)},%
\mathbf{g}_{2}^{(s)}\right) $ with $s\in T$ is bounded. Then it is finite.
Then there exist two distinct natural numbers $s$ and $t$ such that $\left( 
\mathbf{g}_{0}^{(s)},\mathbf{g}_{1}^{(s)},\mathbf{g}_{2}^{(s)}\right)
=\left( \mathbf{g}_{0}^{(t)},\mathbf{g}_{1}^{(t)},\mathbf{g}%
_{2}^{(t)}\right) $ and particularly: $\left( \mathbf{g}_{0}^{\prime \prime
}{}^{(s)},\mathbf{g}_{1}^{\prime \prime }{}^{(s)},\mathbf{g}_{2}^{\prime
\prime }{}^{(s)}\right) =\left( \mathbf{g}_{0}^{\prime \prime }{}^{(t)},%
\mathbf{g}_{1}^{\prime \prime }{}^{(t)},\mathbf{g}_{2}^{\prime \prime
}{}^{(t)}\right) $. But that's impossible, again by the subtractive nature
of these algorithms. Our hypothesis was false, and then the conclusion of
the theorem holds.
\end{proof}

Now, we state two lemmas which will provide the Second Part of our Lagrange
Theorem.

\begin{lemma}[Degree in a Loop]
Let $\mathbf{X=~}^{\textsc{T}}\left( x_{0},x_{1},x_{2}\right) $ be any
rationally independent triplet of real numbers, with $0<x_{0}<x_{1}<x_{2}$.
Let's suppose that the \emph{Smallest Vector Algorithm} applied on the
triplet $\mathbf{X}$ "makes a loop"\ i.e. that $\mathbf{X}^{\left(
s+p\right) }=\lambda \mathbf{X}^{\left( s\right) }$ with $p>0$. Then $%
\lambda $ \emph{cannot }be a quadratic real number.
\end{lemma}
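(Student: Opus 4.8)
The plan is to argue by contradiction, using the matrix relation $\left(\mathbf{B}^{(s)}\right)^{-1}\mathbf{X}=\mathbf{X}^{(s)}$ from Lemma 1 c) to transfer the loop relation $\mathbf{X}^{(s+p)}=\lambda\mathbf{X}^{(s)}$ into an eigenvalue statement. Set $\mathbf{M}=\mathbf{B}^{(s)}\left(\mathbf{B}^{(s+p)}\right)^{-1}$, an integer matrix of determinant $\pm 1$ (since every $\mathbf{B}^{(s)}$ is unimodular). Then from $\left(\mathbf{B}^{(s+p)}\right)^{-1}\mathbf{X}=\mathbf{X}^{(s+p)}=\lambda\mathbf{X}^{(s)}=\lambda\left(\mathbf{B}^{(s)}\right)^{-1}\mathbf{X}$ we get $\mathbf{M}^{-1}\mathbf{X}=\lambda\mathbf{X}$, i.e. $\mathbf{X}$ is an eigenvector of an integer matrix $\mathbf{N}:=\mathbf{M}^{-1}$ (also with integer entries and determinant $\pm1$) for the eigenvalue $\lambda$. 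Hence $\lambda$ is an algebraic integer which is a root of the characteristic polynomial $\chi_{\mathbf{N}}(t)\in\mathbb{Z}[t]$, a monic cubic, and $\lambda^{-1}$ is likewise an algebraic integer (since $\det\mathbf{N}=\pm1$), so $\lambda$ is a unit. This part reproves what the Second Part of Theorem 1 asserts; the new content here is the degree claim.

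Next I would show $\lambda$ cannot be rational: if $\lambda\in\mathbb{Q}$ then, being an algebraic integer, $\lambda\in\mathbb{Z}$, and being a unit, $\lambda=\pm1$. But $\lambda>0$ is forced by Lemma 1 a) (all coordinates of $\mathbf{X}^{(s)}$ and $\mathbf{X}^{(s+p)}$ are positive, being the cofactors $\mathbf{X}\bullet\mathbf{g}_i^{(\cdot)}\geq 0$, and in fact strictly positive by rational independence of $\mathbf{X}$ together with Theorem 4), so $\lambda=1$. Then $\mathbf{X}$ would be a fixed vector of the unimodular integer matrix $\mathbf{N}$, giving $(\mathbf{N}-\mathbf{I})\mathbf{X}=0$; since $\mathbf{X}$ is rationally independent, $\mathbf{N}=\mathbf{I}$, hence $\mathbf{B}^{(s+p)}=\mathbf{B}^{(s)}$ and $\mathbf{G}^{(s+p)}=\mathbf{G}^{(s)}$, i.e. the full triplet of integer vectors repeats. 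This contradicts the subtractive nature of the algorithm exactly as in the proof of Lemma 2 (a genuine repetition of $\left(\mathbf{g}_0^{(s)},\mathbf{g}_1^{(s)},\mathbf{g}_2^{(s)}\right)$ is impossible). So $\lambda\notin\mathbb{Q}$.

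It remains to exclude $\lambda$ being quadratic irrational, which is the crux. Here I would exploit that $\lambda$ is a root of the \emph{cubic} $\chi_{\mathbf{N}}(t)\in\mathbb{Z}[t]$: if $\lambda$ had degree $2$, its minimal polynomial $\mu(t)\in\mathbb{Z}[t]$ would be a monic quadratic dividing $\chi_{\mathbf{N}}(t)$, so $\chi_{\mathbf{N}}(t)=\mu(t)(t-r)$ with $r\in\mathbb{Q}$, hence $r\in\mathbb{Z}$ a rational eigenvalue of $\mathbf{N}$. Let $\mathbf{v}\in\mathbb{Q}^3$ be a rational eigenvector for $r$. The key geometric fact to extract is that the eigenspace for $\lambda$ is exactly the rational \emph{plane} spanned by $\mathbf{X}$'s "conjugate directions" — more precisely, since $\mathbf{N}$ has a rational eigenvalue $r$ and $\mathbf{X}$ spans the $\lambda$-eigenline, the remaining $\mathbf{N}$-invariant complement meeting the $\lambda$-eigenspace is defined over $\mathbb{Q}$, forcing a nontrivial rational linear relation among $x_0,x_1,x_2$ (one checks that $\mathbf{v}$, being rational and not proportional to $\mathbf{X}$, together with the conjugate of $\mathbf{X}$, pins $\mathbf{X}$ inside a rational $2$-plane). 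This contradicts rational independence of $\mathbf{X}$. The main obstacle is making this last step airtight: one must carefully rule out that $\mathbf{X}$ could lie in the $2$-dimensional eigenspace attached to $\mu$ (rather than on a genuine eigenline) — but if $\mu(\mathbf{N})\mathbf{X}=0$ with $\mu$ quadratic, then $\mathbf{X},\mathbf{N}\mathbf{X}$ span a rational plane (as $\mathbf{N}$ is rational and $\mu$ has rational coefficients, $\mathbf{N}^2\mathbf{X}$ is a $\mathbb{Q}$-combination of $\mathbf{X},\mathbf{N}\mathbf{X}$), and intersecting this $\mathbf{N}$-invariant rational plane with the rational line $\mathbb{Q}\mathbf{v}$ for the third eigenvalue shows the plane is rational, again contradicting rational independence of $\mathbf{X}$'s coordinates. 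Either way the quadratic case collapses.
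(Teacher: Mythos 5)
Your strategy is genuinely different from the paper's. The paper normalizes to $\mathbf{Y}={}^{\textsc{T}}(y_0,y_1,1)$, expands $\widetilde{\mathbf{B}}\mathbf{Y}=\lambda\mathbf{Y}$ into three scalar equations, and by a short case analysis shows $y_0,y_1\in\mathbb{Q}(\lambda)$, a $2$-dimensional $\mathbb{Q}$-space, so $y_0,y_1,1$ are rationally dependent. You instead work structurally with eigenspaces of an integer unimodular matrix $\mathbf{N}$ satisfying $\mathbf{N}\mathbf{X}=\lambda\mathbf{X}$ (up to a harmless $\lambda\leftrightarrow\lambda^{-1}$ slip in your definition of $\mathbf{M}$, which does not affect the degree). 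Your reduction to ``rule out $\lambda\in\mathbb{Q}$ and $\lambda$ quadratic'' is correct, and the rational case is handled fine: $(\mathbf{N}-\mathbf{I})\mathbf{X}=0$ with $\mathbf{X}$ rationally independent does force $\mathbf{N}=\mathbf{I}$, i.e.\ a forbidden full repetition.

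However, the quadratic case as written contains a genuine gap. You claim ``if $\mu(\mathbf{N})\mathbf{X}=0$ with $\mu$ quadratic, then $\mathbf{X},\mathbf{N}\mathbf{X}$ span a rational plane.'' But $\mathbf{X}$ \emph{is} an eigenvector: $\mathbf{N}\mathbf{X}=\lambda\mathbf{X}$, so $\mathbf{X}$ and $\mathbf{N}\mathbf{X}$ are parallel and span only the line $\mathbb{R}\mathbf{X}$ --- which is not rational (its orthogonal plane would give a rational dependence, the very thing in question). Likewise the remark that you must ``rule out that $\mathbf{X}$ lies in the $2$-dimensional eigenspace attached to $\mu$ rather than on a genuine eigenline'' is misplaced: $\mathbf{X}$ lies on a genuine eigenline \emph{and} inside $\ker\mu(\mathbf{N})$; there is no tension to resolve. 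The clean way to finish along your route is to use $\ker\mu(\mathbf{N})$ itself: with $\chi_{\mathbf{N}}(t)=(t-r)\mu(t)$ and $r,\lambda,\bar\lambda$ three distinct real numbers ($r\in\mathbb{Z}$, $\lambda,\bar\lambda$ irrational conjugates), $\mathbf{N}$ is diagonalizable, so $\ker\mu(\mathbf{N})$ is exactly the sum of the $\lambda$- and $\bar\lambda$-eigenlines, hence $2$-dimensional; it is the kernel of the rational matrix $\mu(\mathbf{N})$, hence a rational plane; and $\mu(\mathbf{N})\mathbf{X}=\mu(\lambda)\mathbf{X}=\mathbf{0}$ puts $\mathbf{X}$ inside it, giving the desired rational linear relation among $x_0,x_1,x_2$ and the contradiction. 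With that correction your argument closes; as submitted, the key step is false.
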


\begin{proof}
If the case of such a loop, we have $\mathbf{X}^{\left( s+p\right) }=\lambda 
\mathbf{X}_{s}^{\left( s\right) }$, or equivalently

$\left( \mathbf{B}^{(s+p)}\right) ^{-1}\mathbf{X}=\lambda \left( \mathbf{B}%
^{(s)}\right) ^{-1}\mathbf{X}$ \ or $\left( \mathbf{B}^{(s+p)}\right) ^{-1}%
\mathbf{B}^{(s)}\mathbf{X}^{\left( s\right) }=\lambda \mathbf{X}^{\left(
s\right) }$.

Let's denote: $\widetilde{\mathbf{B}}:=\left( \mathbf{B}^{(s+p)}\right) ^{-1}%
\mathbf{B}^{(s)}$. Then: $\widetilde{\mathbf{B}}$\ $\mathbf{X}^{\left(
s\right) }=\lambda \mathbf{X}^{\left( s\right) }$, with

$\mathbf{X}^{\left( s\right) }=\ ^{\textsc{T}}\left( x_{0}^{\left( s\right)
},x_{1}^{\left( s\right) },x_{2}^{\left( s\right) }\right) $.

Let's denote: $\mathbf{Y:}=\ ^{\textsc{T}}\left( \frac{x_{0}^{\left( s\right) }%
}{x_{2}^{\left( s\right) }},\frac{x_{1}^{\left( s\right) }}{x_{2}^{\left(
s\right) }},1\right):=\ ^{\textsc{T}}\left( y_{0},y_{1},1\right) $.

Then also: $\widetilde{\mathbf{B}}\mathbf{Y}=\lambda \mathbf{Y}$, which can
be written

$\left( 
\begin{array}{ccc}
\beta _{00} & \beta _{01} & \beta _{02} \\ 
\beta _{10} & \beta _{11} & \beta _{12} \\ 
\beta _{20} & \beta _{21} & \beta _{22}%
\end{array}%
\right) \left( 
\begin{array}{c}
y_{0} \\ 
y_{1} \\ 
1%
\end{array}%
\right) =\left( 
\begin{array}{c}
\lambda y_{0} \\ 
\lambda y_{1} \\ 
\lambda%
\end{array}%
\right) $, where each $\beta _{ij}$ is an integer.

Then we have: $\beta _{20}y_{0}+\beta _{21}y_{1}+\beta _{22}=\lambda $. From
now on, we suppose that $\lambda $ is a quadratic real number.

\textbf{First Case: \ }$\beta _{21}=0$ \textbf{. }In this case, $y_{0}$
belongs to $%
\mathbb{Q}
\left( \lambda \right) $. In addition, the same equality of matrices
provides $\beta _{10}y_{0}+\beta _{11}y_{1}+\beta _{12}=\lambda y_{1}$,
which can be written: $\beta _{10}y_{0}+\left( \beta _{11}-\lambda \right)
y_{1}+\beta _{12}=0$. Then $y_{1}=\dfrac{\beta _{10}y_{0}+\beta _{12}}{%
\lambda -\beta _{11}}$ and $y_{1}$ also belongs to $%
\mathbb{Q}
\left( \lambda \right) $.

\textbf{Second Case: }$\beta _{21}\neq 0$. Then: $y_{1}=\gamma
_{0}y_{0}+\gamma _{1}\lambda +\gamma _{2}$, with $\gamma _{0}$, $\gamma _{1}$%
, $\gamma _{2}$ rationals. The same equality of matrices provides $\beta
_{00}y_{0}+\beta _{01}y_{1}+\beta _{02}=\lambda y_{0}$, or $\left( \beta
_{00}+\beta _{01}\gamma _{0}-\lambda \right) y_{0}+\beta _{01}\gamma
_{1}\lambda +\gamma _{2}+\beta _{02}=0$. Then $y_{0}=\dfrac{\beta
_{01}\gamma _{1}\lambda +\gamma _{2}+\beta _{02}}{\lambda -\beta _{00}-\beta
_{01}\gamma _{0}}$.\newline
Then, in both cases, both $y_{0}$ and $y_{1}$ belong to $%
\mathbb{Q}
\left( \lambda \right) $, which is a vectorial space with dimension $2$ over 
$%
\mathbb{Q}
.$ Then the three numbers $y_{0}$, $y_{1}$ and $1$ are rationally dependent.
So are therefore the three numbers $x_{0}^{\left( s\right) },x_{1}^{\left(
s\right) },x_{2}^{\left( s\right) },$ and then the three numbers $%
x_{0}=x_{0}^{\left( 0\right) }$, $x_{1}=x_{1}^{\left( 0\right) }$, $%
x_{2}=x_{2}^{\left( 0\right) },$ because $\mathbf{X}^{\left( 0\right) }=%
\mathbf{B}^{(s)}\mathbf{X}^{\left( s\right) }$. This contradicts our
hypothesis. Then $\lambda $ \emph{cannot }be a quadratic real number.
\end{proof}

\begin{lemma}
\emph{(Converse Statement in Lagrange Theorem)}.\newline
Let $\mathbf{X=~}^{\textsc{T}}%
\left( x_{0},x_{1},x_{2}\right) $ be any rationally independent triplet of
real numbers, with $0<x_{0}<x_{1}<x_{2}$. Let's suppose that the \emph{%
Smallest Vector Algorithm} applied on the triplet $\mathbf{X}$ makes a
"loop\textquotedblright\ i.e. that $\mathbf{X}^{\left( s+p\right) }=\lambda 
\mathbf{X}^{\left( s\right) }$ with $p>0$. Then $\lambda $ is an algebraic 
\emph{integer }of degree 3, and a \emph{unit}. The minimal polynomial of $%
\lambda $ can be easily deduced from the relation $\mathbf{X}^{\left(
s+p\right) }=\lambda \mathbf{X}^{\left( s\right) }$ as also the expressions
of $\ \frac{x_{0}}{x_{2}}$ and $\frac{x_{1}}{x_{2}}$ as rational fractions
of $\lambda $.
\end{lemma}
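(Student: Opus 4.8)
The plan is to build on Lemma 3, which already rules out that $\lambda$ is quadratic (or rational, as a degenerate subcase), and to extract the algebraic nature of $\lambda$ from the eigenvalue equation $\widetilde{\mathbf{B}}\,\mathbf{X}^{(s)} = \lambda\,\mathbf{X}^{(s)}$, where $\widetilde{\mathbf{B}} := \bigl(\mathbf{B}^{(s+p)}\bigr)^{-1}\mathbf{B}^{(s)}$ is an integer matrix with determinant $\pm 1$, exactly as set up in the proof of Lemma 3. First I would observe that $\mathbf{X}^{(s)}$ is a nonzero real eigenvector of the integer matrix $\widetilde{\mathbf{B}}$ for the eigenvalue $\lambda$; hence $\lambda$ is a root of the characteristic polynomial $\chi(t) = \det(t\mathbf{I} - \widetilde{\mathbf{B}})$, which is a monic degree-$3$ polynomial with integer coefficients. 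Therefore $\lambda$ is an algebraic integer of degree $1$, $2$, or $3$. By Lemma 3 the degree is not $2$; and if the degree were $1$ then $\lambda \in \mathbb{Q}$ would itself be a rational eigenvalue, forcing (by the rank argument: $(\widetilde{\mathbf{B}} - \lambda\mathbf{I})$ is a rational matrix with nontrivial kernel) the eigenvector $\mathbf{X}^{(s)}$ to be proportional to a rational vector, so that $x_0^{(s)}, x_1^{(s)}, x_2^{(s)}$ — and hence $x_0, x_1, x_2$ via $\mathbf{X}^{(0)} = \mathbf{B}^{(s)}\mathbf{X}^{(s)}$ — would be rationally dependent, contradicting the hypothesis. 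So $\lambda$ has degree exactly $3$, and $\chi(t)$ is (up to the fact that it is already monic with integer coefficients and irreducible over $\mathbb{Q}$, since $\lambda$ has degree $3$) the minimal polynomial of $\lambda$.

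Next I would show $\lambda$ is a unit. Since $\widetilde{\mathbf{B}}$ is a product of the integer unimodular matrices $\bigl(\mathbf{B}^{(s+p)}\bigr)^{-1}$ and $\mathbf{B}^{(s)}$, it is itself an integer matrix with $\det \widetilde{\mathbf{B}} = \pm 1$. The constant term of $\chi(t)$ is $(-1)^3\det\widetilde{\mathbf{B}} = \mp 1$, so the minimal polynomial of $\lambda$ has constant term $\pm 1$; equivalently, the norm $N_{\mathbb{Q}(\lambda)/\mathbb{Q}}(\lambda) = \pm 1$. Hence $\lambda$ is a unit in the ring of integers of $\mathbb{Q}(\lambda)$. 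This also delivers the promised explicit description: the minimal polynomial is read off directly as $\chi(t) = \det(t\mathbf{I} - \widetilde{\mathbf{B}})$ from the integer entries $\beta_{ij}$ of $\widetilde{\mathbf{B}}$.

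Finally, for the rational expressions of $x_0/x_2$ and $x_1/x_2$: working with $\mathbf{Y} = {}^{\textsc{T}}(y_0, y_1, 1)$ as in Lemma 3, the relation $\widetilde{\mathbf{B}}\mathbf{Y} = \lambda\mathbf{Y}$ gives, from the second and third coordinates, two linear equations
\begin{equation*}
\beta_{10}y_0 + (\beta_{11}-\lambda)y_1 + \beta_{12} = 0, \qquad \beta_{20}y_0 + \beta_{21}y_1 + (\beta_{22}-\lambda) = 0,
\end{equation*}
in the two unknowns $y_0, y_1$ with coefficients in $\mathbb{Q}(\lambda)$. Since $y_0, y_1, 1$ are rationally independent (the hypothesis), this $2\times 2$ system over $\mathbb{Q}(\lambda)$ must have the solution $\mathbf{Y}$ restricted to $\mathbb{P}$; one checks the coefficient determinant is nonzero (otherwise the eigenspace of $\lambda$ would meet the plane $\{z_2 = 0\}$, again forcing rational dependence), so Cramer's rule expresses $y_0 = x_0^{(s)}/x_2^{(s)}$ and $y_1 = x_1^{(s)}/x_2^{(s)}$ as explicit elements of $\mathbb{Q}(\lambda)$, i.e. as rational fractions of $\lambda$. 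Pulling back through $\mathbf{X}^{(0)} = \mathbf{B}^{(s)}\mathbf{X}^{(s)}$, which is a rational-coefficient (indeed integer) invertible transformation, yields $x_0/x_2$ and $x_1/x_2$ as rational fractions of $\lambda$ as well. The main obstacle, and the only point requiring genuine care, is the exclusion of degrees $1$ and $2$: the degree-$2$ case is precisely Lemma 3, and the degree-$1$ case must be handled by the rank/rational-dependence argument sketched above rather than taken for granted.
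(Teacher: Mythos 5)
Your proof follows essentially the same route as the paper: you set up $\widetilde{\mathbf{B}} = \bigl(\mathbf{B}^{(s+p)}\bigr)^{-1}\mathbf{B}^{(s)}$, observe that $\lambda$ is a root of its characteristic polynomial (monic, integer coefficients, constant term $\mp 1$), conclude algebraic integer and unit, rule out degree $2$ via Lemma 3, then solve the eigenvector equation for $y_0, y_1$. Two places where your argument differs and would need tightening. First, for the degree-$1$ exclusion: your ``rank argument'' works if $\ker(\widetilde{\mathbf{B}} - \lambda\mathbf{I})$ has dimension $1$ (then $\mathbf{X}^{(s)}$ is proportional to a rational vector) or dimension $2$ (then the three coordinates satisfy a rational linear relation), but if the kernel is all of $\mathbb{R}^3$ --- i.e. $\widetilde{\mathbf{B}} = \pm\mathbf{I}$ --- it imposes no constraint on $\mathbf{X}^{(s)}$ at all. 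The paper avoids this by a different, shorter route: a rational algebraic unit must be $\pm 1$, whence $\mathbf{X}^{(s+p)} = \pm\mathbf{X}^{(s)}$, which is impossible by the strictly subtractive recursion (the sum of components strictly decreases). You need to add this subtractive argument to cover the $\widetilde{\mathbf{B}} = \pm\mathbf{I}$ case. Second, your justification that the $2\times 2$ coefficient determinant is nonzero --- ``otherwise the eigenspace of $\lambda$ would meet the plane $\{z_2 = 0\}$'' --- does not quite say the right thing (the one-dimensional eigenspace $\mathbb{R}\mathbf{Y}$ meets that plane only at the origin in any case). The clean reason, which is what the paper implicitly invokes, is that vanishing of the $2\times 2$ determinant would give a polynomial equation of degree at most $2$ with rational coefficients satisfied by $\lambda$, contradicting that its minimal polynomial has degree $3$. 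With those two local repairs your proof is correct and is the paper's proof in all essentials.
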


\begin{proof}
By hypothesis,we have: $\mathbf{X}^{\left( s+p\right) }=\lambda \mathbf{X}%
^{\left( s\right) }$, or equivalently

$\left( \mathbf{B}^{(s+p)}\right) ^{-1}\mathbf{X}=\lambda \left( \mathbf{B}%
^{(s)}\right) ^{-1}\mathbf{X}$ \ or $\left( \mathbf{B}^{(s+p)}\right)
^{-1}\left( \mathbf{B}^{(s)}\right) \mathbf{X}^{\left( s\right) }=\lambda 
\mathbf{X}^{\left( s\right) }$.

Let's denote: $\widetilde{\mathbf{B}}:=\left( \mathbf{B}^{(s+p)}\right)
^{-1}\left( \mathbf{B}^{(s)}\right) $. Then: $\widetilde{\mathbf{B}}$\ $%
\mathbf{X}^{\left( s\right) }=\lambda \mathbf{X}^{\left( s\right) }$. Let $%
F\left( \xi \right) $ be the polynomial: $F\left( \xi \right) =\det \left( 
\widetilde{\mathbf{B}}-\xi \mathbf{I}\right) $, we have: $F\left( \lambda
\right) =0$. But $\widetilde{\mathbf{B}}$ \ is an integer matrix with
determinant $\pm 1$; then we have a relation of the shape: $\lambda
^{3}+m\lambda ^{2}+n\lambda \pm 1=0$, with $m$ and $n$ natural integers.

Then $\lambda $ is an algebraic integer and a unit. Then, if $\lambda $ is a
rational number, it has to be: $\lambda =\pm 1$. But that's impossible
because we should have $\mathbf{X}^{\left( s+p\right) }=\pm \mathbf{X}%
^{\left( s\right) }$, which is impossible by the subtractive form of the
recursive relation on the sequence $\left( \mathbf{X}^{\left( s\right)
}\right) $. By the previous Lemma $\lambda $ cannot be a root of a second
degree polynomial. Then $F\left( \xi \right) $ is the minimal polynomial of $%
\lambda $ over $\mathbb{Q}$, and $\lambda $ is a cubic algebraic integer.
Its norm is 1, and the relation $\lambda \left( \lambda ^{2}+m\lambda
+n\right) =\mp 1$ shows that $\lambda $ is a unit.

Now, we have to solve the equation $\left( \widetilde{\mathbf{B}}-\lambda 
\mathbf{I}\right) \mathbf{X}^{\left( s\right) }=0$, or $\left( \widetilde{%
\mathbf{B}}-\lambda \mathbf{I}\right) \mathbf{Y}=\mathbf{0}$, $\mathbf{Y}$
being the unknown vector, with the classic method of linear algebra. Because 
$\mathbf{Y}=\left( \mathbf{B}^{(s)}\right) ^{-1}\mathbf{X},$ the triplet $%
\mathbf{Y}$ is rationally independent, then $y_{3}\neq 0$. If we just want
to obtain one vector solution, we may even suppose that $y_{3}=1$. Let $%
\mathbf{A}$ be the matrix $\left( \widetilde{\mathbf{B}}-\lambda \mathbf{I}%
\right) $ without its third row, let $\mathbf{a}_{2}$ be the third column of
the matrix $\mathbf{A}$, and let $\mathbf{A}^{\prime }$ be the matrix $%
\mathbf{A}$ without its third column. Let also $\mathbf{Y}^{\prime }$ be the
vector $\mathbf{Y}$ without its third coordinate. With block submatrices, we
have to solve in $\mathbf{Y}^{\prime }$ the equation: $%
\begin{bmatrix}
\mathbf{A}^{\prime } & \text{;\ }\mathbf{a}_{2}\text{ }%
\end{bmatrix}%
\times 
\begin{bmatrix}
\mathbf{Y}^{\prime } \\ 
1%
\end{bmatrix}%
=\mathbf{0}$. This gives $\mathbf{A}^{\prime }\times \mathbf{Y}^{\prime }+%
\mathbf{a}_{2}\times 1=\mathbf{0}$, or

$\mathbf{Y}^{\prime }=-\left( \mathbf{A}^{\prime }\right) ^{-1}\mathbf{a}%
_{2} $. We know that $\det \left( \mathbf{A}^{\prime }\right) \neq 0$,
because $F\left( \xi \right) $ is the minimal polynomial of $\lambda $. Then
we have $\ \mathbf{X}^{\left( s\right) }=%
\begin{bmatrix}
-\alpha \left( \mathbf{A}^{\prime }\right) ^{-1}\mathbf{a}_{2} \\ 
\alpha%
\end{bmatrix}%
$, for some $\alpha $, and then:

$\mathbf{X}=\mathbf{B}^{\left( s\right) }\mathbf{X}^{\left( s\right)
}=\alpha \mathbf{B}^{\left( s\right) }\times 
\begin{bmatrix}
-\left( \mathbf{A}^{\prime }\right) ^{-1}\mathbf{a}_{2} \\ 
1%
\end{bmatrix}%
$. Of course, $\mathbf{X}$ is defined up to a multiplicative coefficient.
Note that $\mathbf{A}^{\prime }$ and $\mathbf{a}_{2}$ are rational fractions
of the unit $\lambda $. Then the vector $\mathbf{Z:=}%
\begin{bmatrix}
-\left( \mathbf{A}^{\prime }\right) ^{-1}\mathbf{a}_{2} \\ 
1%
\end{bmatrix}%
$ can be also obtained as an expression of $\lambda $. Let $\widehat{\mathbf{%
b}}_{0}$, $\widehat{\mathbf{b}}_{1}$, $\widehat{\mathbf{b}}_{2}$ be the
three rows of the matrix $\mathbf{B}^{\left( s\right) }$; then we have $%
\left( 
\begin{array}{c}
x_{0}/\alpha \\ 
x_{1}/\alpha \\ 
x_{2}/\alpha%
\end{array}%
\right) =%
\begin{pmatrix}
\widehat{\mathbf{b}}_{0} \\ 
\widehat{\mathbf{b}}_{1} \\ 
\widehat{\mathbf{b}}_{2}%
\end{pmatrix}%
\mathbf{Z=}%
\begin{pmatrix}
\widehat{\mathbf{b}}_{0}\mathbf{Z} \\ 
\widehat{\mathbf{b}}_{1}\mathbf{Z} \\ 
\widehat{\mathbf{b}}_{2}\mathbf{Z}%
\end{pmatrix}%
$. Then $\frac{x_{0}}{x_{2}}=\frac{\widehat{\mathbf{b}}_{0}\mathbf{Z}}{%
\widehat{\mathbf{b}}_{2}\mathbf{Z}}$ and $\frac{x_{1}}{x_{2}}=\frac{\widehat{%
\mathbf{b}}_{1}\mathbf{Z}}{\widehat{\mathbf{b}}_{2}\mathbf{Z}}$ and we can
express$\ \frac{x_{0}}{x_{2}}$ and $\frac{x_{1}}{x_{2}}$ as rational
fractions of $\lambda $.
\end{proof}

\subsection{Demonstration of the Lagrange Property (particular case)}

In this subsection, we prove the Theorem on the Lagrange Property, admitting
the Dirichlet Theorem which is proved in other section; we're doing this
proof \textbf{only in the special case }$\mathbf{X}=~^{\textsc{T}}\left(
1,\theta ,\theta ^{2}\right) $, with $\ \theta =\sqrt[3]{N}$, $N$ \ being an
natural number, but not $\theta $. \ \textbf{For the complete proof, see
Section 5.}. We're going to prove that the Smallest Vector Algorithm in this
case makes a loop.

It is a basic result in Diophantine Approximation that there exists a real
number $e>0$ such that for any non-null integer point $\mathbf{h=\ }^{\text{T%
}}\left( m,n,p\right) ,$ the inequality $\left( \max \left( \left\vert
m\right\vert ,\left\vert n\right\vert ,\left\vert p\right\vert \right)
\right) ^{2}\times \left\vert \mathbf{h}\bullet \mathbf{X}\right\vert >e$ \
holds. See for instance \cite{CasselsDA} (Cassels), Theorem III, page 79,
statement (2), which is much stronger. But in finite dimension, all the
norms are equivalent. Then there exists $d>0$ such that for any non-null
integer point $\mathbf{h,}$ the inequality $\left\Vert \mathbf{h}\right\Vert
^{2}\left\vert \mathbf{h}\bullet \mathbf{X}\right\vert >d$ holds, id est $%
\left\Vert \mathbf{h}\right\Vert ^{2}\left\Vert \mathbf{h}^{\prime \prime
}\right\Vert \times \left\Vert \mathbf{X}\right\Vert >d.$ 
\newline
Then, by our Dirichlet Property Theorem 2, which we admit temporarily, there exists an infinite set of integers $S$ such that: 
\newline
$\underset{s\in S}{\sup }\left[ \left( \underset{i=0,1,2}{\max }\left\Vert 
\mathbf{b}_{i}^{\prime (s)}\right\Vert \right) ^{2}\left( \underset{i=0,1,2}{%
\max }\left\Vert \mathbf{b}^{\prime \prime }{}_{i}^{(s)}\right\Vert \right) %
\right] =L<+\infty $, with in addition:

$\underset{s\rightarrow +\infty ,\text{ }s\in S}{\lim }\left( \underset{%
i=0,1,2}{\max }\left\Vert \mathbf{b}_{i}^{\prime }{}^{(s)}\right\Vert
\right) =0.$

Let $s$ be any element of $S$, and let $\left( \mathbf{b}_{0}^{(s)},\mathbf{b%
}_{1}^{(s)},\mathbf{b}_{2}^{(s)}\right) $ be the column vectors of $\left( 
\mathbf{B}^{(s)}\right)$. We choose one of those three vectors, say $%
\mathbf{b}_{0}^{(s)}$, which will be more simply denoted: $\mathbf{b}^{(s)}=%
\mathbf{b}_{0}^{(s)}$. Let's define its coordinates: $\mathbf{b}^{(s)}=~^{%
\text{T}}\left( b_{x}^{(s)},b_{y}^{(s)},b_{z}^{(s)}\right) $. From now on
and for a while, we may omit the indices $^{(s)}$.

\begin{notation}
The notation $\mathbf{M}_{\mathbf{b}}^{(s)}=\mathbf{M}_{\mathbf{b}}$ or $%
\mathbf{M}\left[ \mathbf{b}^{(s)}\right] $ will denote the integer matrix: $%
\mathbf{M}_{\mathbf{b}}=\mathbf{M}\left[ \mathbf{b}^{(s)}\right] =\left( 
\begin{array}{ccc}
b_{z} & b_{y} & b_{x} \\ 
N\cdot b_{x} & b_{z} & b_{y} \\ 
N\cdot b_{y} & N\cdot b_{x} & b_{z}%
\end{array}%
\right) $, which has the interesting property: $\mathbf{M}_{\mathbf{b}}%
\mathbf{X=M}_{\mathbf{b}}\left( 
\begin{array}{l}
1 \\ 
\theta \\ 
\theta ^{2}%
\end{array}%
\right) =\left( b_{z}+b_{y}\theta +b_{x}\theta ^{2}\right) \left( 
\begin{array}{l}
1 \\ 
\theta \\ 
\theta ^{2}%
\end{array}%
\right) $. Let's denote by $\lambda _{\mathbf{b}}$ or $\lambda \left[ 
\mathbf{b}^{(s)}\right] $ the following element of $\mathbb{Z}\left[ \theta %
\right] $:\newline
 $\lambda _{\mathbf{b}}:=\left( b_{z}+b_{y}\theta +b_{x}\theta
^{2}\right) $.
Then we have: $\mathbf{M}_{\mathbf{b}}\mathbf{X}=\lambda _{\mathbf{b}}%
\mathbf{X}$; i.e. $\lambda _{\mathbf{b}}$ is an eigenvalue of $\mathbf{M}_{%
\mathbf{b}}$ with eigenvector $\mathbf{X}$.
\end{notation}

We consider the sequence of the matrices $\mathbf{\Pi }^{(s)}=\mathbf{\Pi }=$
$^{\textsc{T}}\mathbf{M}_{\mathbf{b}}\mathbf{G}$. Let $\left( \mathbf{b}%
^{\#\#},\mathbf{b}^{\#},\mathbf{b}\right) $ be the three column vectors of $%
\mathbf{M}_{\mathbf{b}}$.

Then $\mathbf{\Pi }=$ $^{\textsc{T}}\mathbf{M}_{\mathbf{b}}\mathbf{G=}%
\left( 
\begin{array}{ccc}
\mathbf{b}^{\#\#}\bullet \mathbf{g}_{0} & \mathbf{b}^{\#\#}\bullet \mathbf{g}%
_{1} & \mathbf{b}^{\#\#}\bullet \mathbf{g}_{2} \\ 
\mathbf{b}^{\#}\bullet \mathbf{g}_{0} & \mathbf{b}^{\#}\bullet \mathbf{g}_{1}
& \mathbf{b}^{\#}\bullet \mathbf{g}_{2} \\ 
\mathbf{b}\bullet \mathbf{g}_{0} & \mathbf{b}\bullet \mathbf{g}_{1} & 
\mathbf{b}\bullet \mathbf{g}_{1}%
\end{array}%
\right):=\left( \pi _{i,j}\right) $, $i=0,1,2$; $j=0,1,2$.

\begin{lemma}[Main Lemma for Lagrange]
The matrices $\Pi ^{(s)}$are bounded independently from $s$.
\end{lemma}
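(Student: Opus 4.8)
The plan is to estimate the entries $\pi_{i,j}^{(s)} = \mathbf{b}_i^{\#}\bullet\mathbf{g}_j^{(s)}$ (with the obvious notation for the three columns of $\mathbf{M}_{\mathbf{b}}$) by splitting each vector into its projection on $\mathbb{D}$ and on $\mathbb{P}$. First I would observe that the three column vectors $\mathbf{b}^{\#\#}, \mathbf{b}^{\#}, \mathbf{b}$ of $\mathbf{M}_{\mathbf{b}}$ all have norm comparable to $\|\mathbf{b}\|=\|\mathbf{b}^{(s)}\|$, up to a constant depending only on $N$ (each entry is, up to a factor $N$, one of $b_x, b_y, b_z$). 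Moreover, since $\mathbf{M}_{\mathbf{b}}\mathbf{X} = \lambda_{\mathbf{b}}\mathbf{X}$, the vector $\mathbf{X}$ is an eigenvector, so $\mathbf{M}_{\mathbf{b}}$ maps $\mathbb{D}=\mathbb{R}\mathbf{X}$ to itself, and hence (since $\mathbf{M}_{\mathbf{b}}$ has the special circulant-type shape, one checks ${}^{\textsc{T}}\mathbf{M}_{\mathbf{b}}$ also has $\mathbf{X}$-related structure — in fact the relevant point is) the orthogonal complement behaviour is controlled: writing $\mathbf{g}_j^{(s)} = \mathbf{g}_j^{\prime(s)} + \mathbf{g}_j^{\prime\prime(s)}$ with $\mathbf{g}_j^{\prime(s)}\in\mathbb{P}$, $\mathbf{g}_j^{\prime\prime(s)}\in\mathbb{D}$.

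The key computation is to expand $\pi_{i,j}^{(s)}$ using this decomposition on both factors. The ``diagonal'' contribution $\mathbf{b}^{\#\prime\prime}_i\bullet\mathbf{g}_j^{\prime\prime(s)}$ involves only lengths along $\mathbb{D}$: here $\|\mathbf{g}_j^{\prime\prime(s)}\| \le \max_i\|\mathbf{g}_i^{\prime\prime(s)}\|$ is small and, crucially, is controlled by the cofactor relation together with the Dirichlet bound on $\|\mathbf{b}_i^{\prime\prime(s)}\|$ from Theorem 2(c): one has $\|\mathbf{g}_j^{\prime\prime(s)}\|\cdot\|\mathbf{b}_k^{\prime\prime(s)}\|$ bounded in the right way because $(\mathbf{X}\bullet\mathbf{g}_j^{(s)})\mathbf{b}_j^{(s)}$ reconstructs $\mathbf{X}$ (Lemma 1b). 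The ``transverse'' contribution $\mathbf{b}^{\#\prime}_i\bullet\mathbf{g}_j^{\prime(s)}$ is where the Dirichlet product $\big(\max\|\mathbf{b}_i^{\prime(s)}\|\big)^2\big(\max\|\mathbf{b}^{\prime\prime}_i{}^{(s)}\|\big) = L < \infty$ enters: I would show $\|\mathbf{b}^{\#\prime}_i\|$ is comparable to $\max_i\|\mathbf{b}_i^{\prime(s)}\|$, which tends to $0$, while $\|\mathbf{g}_j^{\prime(s)}\| = \max\|\mathbf{g}_i^{\prime(s)}\|$ grows, but the two growths are reciprocal because $\mathbf{G}^{(s)}$ and $\mathbf{B}^{(s)}$ are polar (unimodular, mutually inverse-transpose): the product of a ``large'' $\mathbf{g}$-direction length with the corresponding ``small'' $\mathbf{b}$-direction length is bounded by a constant. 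Assembling the four cross terms for each entry, every piece is $O(1)$ with constant depending only on $N$, $L$, and $\|\mathbf{X}\|$.

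The main obstacle I anticipate is making precise the reciprocity between the $\mathbf{g}$-lengths and the $\mathbf{b}$-lengths inside the plane $\mathbb{P}$ and the line $\mathbb{D}$ respectively — that is, turning the algebraic fact $\mathbf{B}^{(s)} = (\mathbf{G}^{(s)})^{\ast}$ into the metric statement that $\|\mathbf{b}_i^{\prime(s)}\|$ is essentially $1/\|\mathbf{g}_i^{\prime(s)}\|$ (up to bounded factors), and similarly on $\mathbb{D}$. This requires a determinant/volume argument: the parallelepiped spanned by $\mathbf{g}_0^{(s)},\mathbf{g}_1^{(s)},\mathbf{g}_2^{(s)}$ has volume $1$, so the area of its projection onto $\mathbb{P}$ times the height along $\mathbb{D}$ is $1$, and the balanced property (Geometrical Theorem / Theorem 3) guarantees this projected hexagon is not too eccentric along the subsequence $S$, so individual lengths are pinned down. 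Once that reciprocity is in hand, combined with Lemma 1(a)--(b) and the three Dirichlet estimates of Theorem 2, the boundedness of $\mathbf{\Pi}^{(s)}$ over $s\in S$ follows by collecting the bounded contributions; this is what makes the Lagrange loop argument work in the next subsection, since a bounded sequence of integer matrices takes only finitely many values.
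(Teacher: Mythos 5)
Your overall plan — decompose each entry of $\mathbf{\Pi}^{(s)}$ by projecting on $\mathbb{D}$ and $\mathbb{P}$, note that the columns of $\mathbf{M}_{\mathbf{b}}$ are controlled by $\|\mathbf{b}^{(s)}\|$ via the circulant structure, and appeal to a Dirichlet product — is the right shape of argument, and you correctly single out the subsequence $S$ and the cofactor relation as relevant. But the proposal has a genuine gap at its load-bearing step.

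You propose to close the estimate by a \emph{per-index reciprocity}, asserting that $\|\mathbf{b}_i^{\prime(s)}\|$ is ``essentially $1/\|\mathbf{g}_i^{\prime(s)}\|$'', and you invoke the Geometrical Theorem (balancedness) to ``pin down individual lengths''. This is not available and is not needed. The unimodularity of $\mathbf{G}^{(s)}$ gives a volume identity, not a length-by-length inverse relation for matching indices, and even along $S$ the hexagon being balanced does not make the three $\|\mathbf{g}_i^{\prime}\|$ comparable to the reciprocals of the three $\|\mathbf{b}_i^{\prime}\|$ one at a time. Similarly, your treatment of the diagonal term $\mathbf{b}^{\#\prime\prime}\bullet\mathbf{g}_j^{\prime\prime}$ is backwards: $\|\mathbf{b}^{\#\prime\prime}\|$ is \emph{large} (the $\mathbf{b}$'s are close to the line $\mathbb{D}$), and the smallness must come from $\|\mathbf{g}_j^{\prime\prime}\|$, which you then try to couple to $\|\mathbf{b}_k^{\prime\prime}\|$ via the cofactor relation — but that relation gives a linear combination, not a per-term bound.

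The step you are missing is to \emph{rewrite the $\mathbf{g}$'s as wedge products of the $\mathbf{b}$'s}. Since $\mathbf{G}^{(s)}=(\mathbf{B}^{(s)})^{\ast}$ with $\det=\pm1$, one has (up to sign and for a circular permutation $(i,j,k)$)
\begin{equation*}
\mathbf{g}_i^{\prime}=\pm\bigl(\mathbf{b}_j^{\prime\prime}\wedge\mathbf{b}_k^{\prime}+\mathbf{b}_j^{\prime}\wedge\mathbf{b}_k^{\prime\prime}\bigr),\qquad
\mathbf{g}_i^{\prime\prime}=\pm\,\mathbf{b}_j^{\prime}\wedge\mathbf{b}_k^{\prime}.
\end{equation*}
Feeding this into $\mathbf{b}^{\#}\bullet\mathbf{g}_i=\mathbf{b}^{\#\prime}\bullet\mathbf{g}_i^{\prime}+\mathbf{b}^{\#\prime\prime}\bullet\mathbf{g}_i^{\prime\prime}$ turns every term into a product of $\mathbf{b}$-lengths only, namely at most $\|\mathbf{b}^{\#\prime}\|\,\beta^{\prime}\beta^{\prime\prime}$ and $\|\mathbf{b}^{\#\prime\prime}\|\,(\beta^{\prime})^2$ with $\beta^{\prime}=\max_i\|\mathbf{b}_i^{\prime}\|$, $\beta^{\prime\prime}=\max_i\|\mathbf{b}_i^{\prime\prime}\|$. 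Combined with $\|\mathbf{b}^{\#\prime}\|\le\|\mathbf{Q}^{\#}\|\,\beta^{\prime}$ and $\|\mathbf{b}^{\#\prime\prime}\|\le\theta\beta^{\prime\prime}+2\|\mathbf{Q}^{\#}\|\beta^{\prime}$ (obtained from $\mathbf{b}^{\#}=\mathbf{Q}^{\#}\mathbf{b}_0^{\prime}+\|\mathbf{b}_0^{\prime\prime}\|\theta\mathbf{n}$), each entry is bounded by a constant times $(\beta^{\prime})^2\beta^{\prime\prime}\le L$ plus a term $(\beta^{\prime})^3\to0$. Only Theorem~2(c) is used; the Geometrical Theorem is not re-invoked at this stage (it was already consumed in establishing Theorem~2). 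Without the wedge-product rewriting, the per-index reciprocity you rely on does not follow from the hypotheses, so the proposal as written does not close.
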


\begin{proof}
We have to find an upper bound for each of the $\left\vert \pi
_{i,j}\right\vert $, $i=0,1,2$; $j=0,1,2$. Let's consider first the $\left\vert \mathbf{b}^{\#}\bullet \mathbf{g}%
_{i}\right\vert $.\newline
We have: $\mathbf{b}^{\#}=\mathbf{Q}^{\#}\left( 
\begin{array}{c}
b_{x} \\ 
b_{y} \\ 
b_{z}%
\end{array}%
\right) =\mathbf{Q}^{\#}\mathbf{b}$, with $\mathbf{Q}^{\#}=\left( 
\begin{array}{ccc}
0 & 1 & 0 \\ 
0 & 0 & 1 \\ 
N & 0 & 0%
\end{array}%
\right) $. We also have $\mathbf{Q}^{\#}\mathbf{X}=\theta \mathbf{X}$. Let $%
\mathbf{n}$ be: $\mathbf{n=}\dfrac{\mathbf{X}}{\left\Vert \mathbf{X}%
\right\Vert }$. Then also $\mathbf{Q}^{\#}\mathbf{n=}\theta \mathbf{n}$.

We have, with always the same kind of notations, $\mathbf{b}^{\#}=\mathbf{b}%
^{\#\prime }+\mathbf{b}^{\#\prime \prime }$.\newline
$\mathbf{b}^{\#}\bullet \mathbf{g}_{i}=\left( \mathbf{b}^{\#\prime }+\mathbf{%
b}^{\#\prime \prime }\right) \bullet \left( \mathbf{g}_{i}^{\prime }+\mathbf{%
g}_{i}^{\prime \prime }\right) =\mathbf{b}^{\#\prime }\bullet \mathbf{g}%
_{i}^{\prime }+\mathbf{b}^{\#\prime \prime }\bullet \mathbf{g}_{i}^{\prime
\prime }$,

with $\mathbf{g}_{i}^{\prime }=\pm \left( \mathbf{b}_{j}^{\prime \prime
}\wedge \mathbf{b}_{k}^{\prime }+\mathbf{b}_{j}^{\prime }\wedge \mathbf{b}%
_{k}^{\prime \prime }\right) $; $\mathbf{g}_{i}^{\prime \prime }=\pm \mathbf{%
b}_{j}^{\prime }\wedge \mathbf{b}_{k}^{\prime }$. Then:%
\begin{equation*}
\left\vert \mathbf{b}^{\#}\bullet \mathbf{g}_{i}\right\vert \leq \left\Vert 
\mathbf{b}^{\#\prime }\right\Vert \left\Vert \mathbf{b}^{\prime \prime
}{}_{j}\right\Vert \left\Vert \mathbf{b}_{k}^{\prime }\right\Vert
+\left\Vert \mathbf{b}^{\#\prime }\right\Vert \left\Vert \mathbf{b}%
_{j}^{\prime }\right\Vert \left\Vert \mathbf{b}^{\prime \prime
}{}_{k}\right\Vert +\left\Vert \mathbf{b}^{\#\prime \prime }\right\Vert
\left\Vert \mathbf{b}_{j}^{\prime }\right\Vert \left\Vert \mathbf{b}%
_{k}^{\prime }\right\Vert .
\end{equation*}%
Let's denote: $\beta ^{\prime }=\underset{i=0,1,2}{\max }\left\Vert \mathbf{b%
}_{i}^{\prime }\right\Vert $ and $\beta ^{\prime \prime }=$ $\underset{%
i=0,1,2}{\max }\left\Vert \mathbf{b}_{i}^{^{\prime \prime }}\right\Vert $,
so that: $\left( \beta ^{\prime }\right) ^{2}\beta ^{\prime \prime }\leq L$.
Then: 
\begin{equation}
\left\vert \mathbf{b}^{\#}\bullet \mathbf{g}_{i}\right\vert \leq \left\Vert 
\mathbf{b}^{\#\prime }\right\Vert \beta ^{\prime \prime }\beta ^{\prime
}+\left\Vert \mathbf{b}^{\#\prime }\right\Vert \beta ^{\prime }\beta
^{\prime \prime }+\left\Vert \mathbf{b}^{\#\prime \prime }\right\Vert \left(
\beta ^{\prime }\right) ^{2}.\label{IneqDiese}
\end{equation}%
But we also have: $\mathbf{b}^{\#}=\mathbf{Q}^{\#}\mathbf{b}=\mathbf{Q}%
^{\#}\left( \mathbf{b}_{0}^{\prime }+\left\Vert \mathbf{b}_{0}^{\prime
\prime }\right\Vert \mathbf{n}\right) =\mathbf{Q}^{\#}\mathbf{b}_{0}^{\prime
}+\left\Vert \mathbf{b}_{0}^{\prime \prime }\right\Vert \theta \mathbf{n}$. %
This proves first that the distance $\left\Vert \mathbf{b}^{\#\prime
}\right\Vert$ between $\mathbf{b}^{\#}$ and $\mathbb{D}$ is less than $%
\left\Vert \mathbf{Q}^{\#}\mathbf{b}_{0}^{\prime }\right\Vert $: 
\begin{equation}
\left\Vert \mathbf{b}^{\#\prime }\right\Vert \leq \left\Vert \mathbf{Q}^{\#}%
\mathbf{b}_{0}^{\prime }\right\Vert \leq \left\Vert \mathbf{Q}%
^{\#}\right\Vert \times \left\Vert \mathbf{b}_{0}^{\prime }\right\Vert \leq
\left\Vert \mathbf{Q}^{\#}\right\Vert \beta ^{\prime }  \label{InegPrim}
\end{equation}%
(this using the norm of the matrix). 
Furthermore, we have the following equality: $\mathbf{b}^{\#\prime }+\mathbf{b}^{\#\prime \prime }=%
\mathbf{b}^{\#}=\mathbf{Q}^{\#}\mathbf{b}_{0}^{\prime }+\left\Vert \mathbf{b}%
_{0}^{\prime \prime }\right\Vert ~\theta \mathbf{n}$, and we deduce:\newline
$\mathbf{b}^{\#\prime \prime }$ $=\left\Vert \mathbf{b}_{0}^{\prime \prime
}\right\Vert ~\theta \mathbf{n+\mathbf{Q}^{\#}\mathbf{b}_{0}^{\prime }-b}%
^{\#\prime }$. Then:\begin{equation}
\left\Vert \mathbf{b}^{\#\prime \prime }\right\Vert \leq \left\Vert \mathbf{b%
}_{0}^{\prime \prime }\right\Vert \theta +2\left\Vert \mathbf{Q}%
^{\#}\right\Vert ~\left\Vert \mathbf{b}_{0}^{\prime }\right\Vert \leq \beta
^{\prime \prime }\theta +2\left\Vert \mathbf{Q}^{\#}\right\Vert \beta
^{\prime }  \label{Ineq Sec}
\end{equation}%
Putting \ref{InegPrim} and \ref{Ineq Sec} in \ref{IneqDiese}, we obtain:

$\left\vert \mathbf{b}^{\#(s)}\bullet \mathbf{g}_{i}^{(s)}\right\vert \leq
\left( \beta ^{\prime (s)}\right) ^{2}\beta ^{\prime \prime (s)}\left(
2\left\Vert \mathbf{Q}^{\#}\right\Vert +\theta \right) +2\left\Vert \mathbf{Q%
}^{\#}\right\Vert \left( \beta ^{\prime (s)}\right) ^{3}$

and then: $\left\vert \mathbf{b}^{\#(s)}\bullet \mathbf{g}%
_{i}^{(s)}\right\vert \leq L\left( 2\left\Vert \mathbf{Q}^{\#}\right\Vert
+\theta \right) +2\left\Vert \mathbf{Q}^{\#}\right\Vert \left( \beta
^{\prime (s)}\right) ^{3}$.

The limit of the last term is $0$, by the \textbf{c)} of the Dirichlet
Properties Theorem. Then the set of the $\left\vert \mathbf{b}%
^{\#(s)}\bullet \mathbf{g}_{i}^{(s)}\right\vert $, with $s$ in $S$, is
bounded. By a similar demonstration, we prove that the numbers $\left\vert 
\mathbf{b}^{\#\#(s)}\bullet \mathbf{g}_{i}^{(s)}\right\vert $ are also
bounded and so are, in an obvious way, the numbers $\left\vert \mathbf{b}%
_{0}^{(s)}\bullet \mathbf{g}_{i}^{(s)}\right\vert $. Then the set of the $%
\mathbf{\Pi }^{(s)}$ is bounded, and the Lemma is proved.
\end{proof}

With this Lemma, the demonstration of the Lagrange result is easy.

\begin{proof}[Proof of the Lagrange Result]
The set of the $\mathbf{\Pi }^{(s)}$ with $s$ in $S$ is bounded; but these
matrices have integral coefficients. Then the number of all the $\mathbf{\Pi 
}^{(s)}$ is finite. Then there exist $s$ and $t$, with $s<t$, such that $%
\mathbf{\Pi }^{(t)}=\mathbf{\Pi }^{(s)}$. This means: $^{\textsc{T}}\mathbf{M}\left[ \mathbf{b}^{(t)}\right] \times \mathbf{G}%
^{(t)}=$~$^{\textsc{T}}\mathbf{M}\left[ \mathbf{b}^{(s)}\right] \times 
\mathbf{G}^{(s)}$. By transposition:

$\left( \mathbf{B}^{(t)}\right) ^{-1}\times \mathbf{M}\left[ \mathbf{b}^{(t)}%
\right] =\left( \mathbf{B}^{(s)}\right) ^{-1}\times \mathbf{M}\left[ \mathbf{%
b}^{(s)}\right] $. We apply that to the column vector $\mathbf{X}$; we
obtain $\left( \mathbf{B}^{(t)}\right) ^{-1}\times \mathbf{M}\left[ \mathbf{b%
}^{(t)}\right] \times \mathbf{X}=\left( \mathbf{B}^{(s)}\right) ^{-1}\times 
\mathbf{M}\left[ \mathbf{b}^{(s)}\right] \times \mathbf{X}$; then, by
\textquotedblright eigenvalue\textquotedblright , see Notation above:

$\left( \mathbf{B}^{(t)}\right) ^{-1}\times $ $\left( \lambda \left[ \mathbf{%
b}^{(t)}\right] \cdot \mathbf{X}\right) \mathbf{=}\left( \mathbf{B}%
^{(s)}\right) ^{-1}\times $ $\left( \lambda \left[ \mathbf{b}^{(s)}\right]
\cdot \mathbf{X}\right) $.\newline
We recall that $\lambda \left[ \mathbf{b}^{(s)}%
\right] $ is a real number in $\mathbb{Z}\left[ \theta \right] $.

Then $\left( \mathbf{B}^{(t)}\right) ^{-1}\mathbf{X=}\dfrac{\lambda \left[ 
\mathbf{b}^{(s)}\right] }{\lambda \left[ \mathbf{b}^{(t)}\right] }\cdot
\left( \mathbf{B}^{(s)}\right) ^{-1}\mathbf{X}$.

This reads $\mathbf{X}^{\left( t\right) }=\lambda \mathbf{X}^{\left(
s\right) }$, with $\lambda =\frac{\lambda \left[ \mathbf{b}^{(s)}\right] }{%
\lambda \left[ \mathbf{b}^{(t)}\right] }$, and the first part of the Theorem
is obtained.

We have already proved the Second Part of the Theorem, by Lemma 4 of the
previous subsection. With this Second Part, we also obtain the final
assertions of the First Part.

In particular, $x_{0},x_{1},x_{2},$ are rationally independent, then so are
also $\frac{x_{0}}{x_{2}}$ , $\frac{x_{1}}{x_{2}}$ and $1$. Then $\left( 
\frac{x_{0}}{x_{2}},\frac{x_{1}}{x_{2}},1\right) $ is a basis of $%
\mathbb{Q}
\left( \rho \right) $ over $%
\mathbb{Q}
$. But we know by the Lemma 4 that $\frac{x_{0}}{x_{2}}$ and $\frac{x_{1}}{%
x_{2}}$ are rational fractions of $\lambda $, therefore they belong to $%
\mathbb{Q}
\left( \lambda \right) $ and then $%
\mathbb{Q}
\left( \rho \right) =%
\mathbb{Q}
\left( \lambda \right) $.
\end{proof}

\section{From the Geometrical Property to the Dirichlet Properties}

In this section, the Geometrical Theorem is admitted, and we prove the 
\textbf{Theorem 2} on Dirichlet Properties.

\subsection{Proof of Part a) of Theorem 2}

\begin{lemma}[Prism Lemma]
Let $\mathbf{g}_{0}^{(s)},\mathbf{g}_{1}^{(s)},$ $\mathbf{g}_{2}^{(s)}$be
the column vectors of the matrix $\mathbf{G}^{(s)}$ generated at the $s$-th
stage by the Smallest Vector Algorithm. Let the sets $\mathbf{H}^{\prime
(s)} $ be the convex hulls:

$\mathbf{H}^{\prime (s)}=\textsc{conv}\left( \mathbf{g}_{0}^{\prime (s)},%
\mathbf{g}_{1}^{\prime (s)},\mathbf{g}_{2}^{\prime (s)},-\mathbf{g}%
_{0}^{\prime (s)},-\mathbf{g}_{1}^{\prime (s)},-\mathbf{g}_{2}^{\prime
(s)}\right) $ in $\mathbb{P}$ .

We shall omit the indices $^{(s)}$. Let $\mathbf{H}$ be the prism: $\mathbf{H%
}:=\mathbf{H}^{\prime }+\mathbb{D}$. Then, with the usual notation $\mathbf{h%
}^{\prime \prime }$ for the orthogonal projection of $\mathbf{h}$ on $%
\mathbb{D}$:

-For each non zero integer point $\mathbf{h}$ in $\mathbf{H}$ , $\left\| 
\mathbf{h}^{\prime \prime }\right\| \geq \left\| \mathbf{g}_{0}^{\prime
\prime }\right\| $ holds.

-For each integer point $\mathbf{h}$ in $\mathbf{H}$ which is not of the
form $n_{0}\mathbf{g}_{0}$, with $n_{0}$ integer, $\left\Vert \mathbf{h}%
^{\prime \prime }\right\Vert \geq \left\Vert \mathbf{g}_{1}^{\prime \prime
}\right\Vert $ holds.

-For each integer point $\mathbf{h}$ in $\mathbf{H}$ \ which is not of the
form $n_{0}\mathbf{g}_{0}+n_{1}\mathbf{g}_{1}$, with $n_{0}$ and $n_{1}$
integers, $\left\Vert \mathbf{h}^{\prime \prime }\right\Vert \geq \left\Vert 
\mathbf{g}_{2}^{\prime \prime }\right\Vert $ holds.

- This implies that, if $\left( \mathbf{h}_{0},\mathbf{h}_{1},\mathbf{h}%
_{2}\right) $ is a free triplet of integer points in $\mathbf{H}$, then $%
\underset{i=0,1,2}{\max }\left( \left\Vert \mathbf{h}_{i}^{\prime \prime
}\right\Vert \right) \geq \underset{i=0,1,2}{\max }\left( \left\Vert \mathbf{%
g}_{i}^{\prime \prime }\right\Vert \right) =\left\Vert \mathbf{g}%
_{2}^{\prime \prime }\right\Vert $.
\end{lemma}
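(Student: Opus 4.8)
The plan is to prove the three "staircase" inequalities for the projection onto $\mathbb{D}$ by analyzing which integer points can lie in the prism $\mathbf{H}=\mathbf{H}'+\mathbb{D}$, and then deduce the final statement about free triplets by a counting/pigeonhole argument. The geometric heart of the matter is that $\mathbf{H}'$ is a centrally symmetric hexagon in $\mathbb{P}$ whose vertices are exactly $\pm\mathbf{g}_0',\pm\mathbf{g}_1',\pm\mathbf{g}_2'$; by the way the Smallest Vector Algorithm constructs $\mathbf{G}^{(s)}$, the vectors $\mathbf{g}_0,\mathbf{g}_1,\mathbf{g}_2$ form a basis of $\mathbb{Z}^3$ (each step multiplies $\mathbf{G}$ by a unimodular matrix), and moreover $\mathbf{g}_0',\mathbf{g}_1'$ are, up to sign, the two shortest vectors one can reach in the lattice $\Lambda'$ obtained by projecting $\mathbb{Z}^3$ onto $\mathbb{P}$ along the relevant sublattices — this is precisely the "smallest vector" selection rule encoded in $\Delta_{\min}$.

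First I would set up the lattice picture: let $\mathbf{h}=n_0\mathbf{g}_0+n_1\mathbf{g}_1+n_2\mathbf{g}_2$ be a nonzero integer point, with projection $\mathbf{h}'=n_0\mathbf{g}_0'+n_1\mathbf{g}_1'+n_2\mathbf{g}_2'$ onto $\mathbb{P}$ and $\mathbf{h}''=(\mathbf{h}\bullet\mathbf{X}/\|\mathbf{X}\|^2)\,\mathbf{X}$ onto $\mathbb{D}$. By Lemma 1 a), $\|\mathbf{X}\|\|\mathbf{h}''\|=|\mathbf{h}\bullet\mathbf{X}|=|n_0(\mathbf{X}\bullet\mathbf{g}_0)+n_1(\mathbf{X}\bullet\mathbf{g}_1)+n_2(\mathbf{X}\bullet\mathbf{g}_2)|$ where the three cofactors $\mathbf{X}\bullet\mathbf{g}_i\geq 0$ are nondecreasing in $i$. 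The condition "$\mathbf{h}\in\mathbf{H}$" means $\mathbf{h}'\in\mathbf{H}'$, i.e. $\mathbf{h}'$ is in the hexagon. The first bullet is immediate: if $\mathbf{h}\neq 0$ then since $\mathbf{g}_0,\mathbf{g}_1,\mathbf{g}_2$ is a $\mathbb{Z}$-basis, $\mathbf{h}$ is not the zero vector, so at least one $n_i\neq 0$; I must show $|\mathbf{h}\bullet\mathbf{X}|\geq\mathbf{X}\bullet\mathbf{g}_0$. The subtle point is ruling out cancellation among the $n_i(\mathbf{X}\bullet\mathbf{g}_i)$: here is where membership in $\mathbf{H}'$ is used — the hexagon condition forces the "coordinates" $(n_0,n_1,n_2)$ to be constrained (e.g. one cannot have all three large with alternating signs), and combined with the ordering of the cofactors one gets that $|\mathbf{h}\bullet\mathbf{X}|$ cannot be a small nonzero combination. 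For the second bullet, if additionally $\mathbf{h}$ is not a multiple of $\mathbf{g}_0$, then $(n_1,n_2)\neq(0,0)$, and I would show $|\mathbf{h}\bullet\mathbf{X}|\geq\mathbf{X}\bullet\mathbf{g}_1$; for the third, if $\mathbf{h}$ is not in the sublattice spanned by $\mathbf{g}_0,\mathbf{g}_1$ then $n_2\neq 0$ and I would show $|\mathbf{h}\bullet\mathbf{X}|\geq\mathbf{X}\bullet\mathbf{g}_2$.

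Granting the three bullets, the final statement follows by a short argument: let $(\mathbf{h}_0,\mathbf{h}_1,\mathbf{h}_2)$ be a free (linearly independent) triplet of integer points in $\mathbf{H}$. If all three of $\|\mathbf{h}_i''\|<\|\mathbf{g}_2''\|$, then by the contrapositive of the third bullet each $\mathbf{h}_i$ lies in the rank-$2$ sublattice $\mathbb{Z}\mathbf{g}_0+\mathbb{Z}\mathbf{g}_1$ — contradicting that $\mathbf{h}_0,\mathbf{h}_1,\mathbf{h}_2$ are linearly independent (three independent vectors cannot all live in a rank-$2$ sublattice). Hence $\max_i\|\mathbf{h}_i''\|\geq\|\mathbf{g}_2''\|=\max_i\|\mathbf{g}_i''\|$, which is the claim.

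The main obstacle I expect is the cancellation issue in the bulleted inequalities: one must genuinely exploit the hexagonal shape of $\mathbf{H}'$ (not just that $\mathbf{h}\in\mathbb{D}+\text{bounded set}$) to control the integer coefficients $n_i$. Concretely, the plan is to observe that the only integer points of the lattice $\Lambda'=\text{proj}_{\mathbb{P}}(\mathbb{Z}^3)$ lying inside the hexagon $\mathbf{H}'$ are the "small" ones — and because $\mathbf{g}_0',\mathbf{g}_1'$ were chosen by the algorithm as (projections of) shortest lattice vectors via the $\Delta_{\min}$ rule, any integer point in $\mathbf{H}'$ decomposes in a controlled way over $\mathbf{g}_0',\mathbf{g}_1'$, with the $\mathbf{g}_2'$-coefficient forced to a limited range; feeding that decomposition into $\mathbf{h}\bullet\mathbf{X}=\sum n_i(\mathbf{X}\bullet\mathbf{g}_i)$ and using the monotonicity of the cofactors then yields each inequality. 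I would make this precise hexagon-by-hexagon, treating the three cases of the algorithm's construction ($\Delta_{\min}$ attained by the $01$-, $12$- or $02$-edge) uniformly, since in each case the new basis is a unimodular modification that keeps the hexagon and the cofactor ordering compatible.
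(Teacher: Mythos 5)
Your proposal correctly identifies cancellation among the $n_i(\mathbf{X}\bullet\mathbf{g}_i)$ as the crux of the matter, and your deduction of the fourth bullet from the third (three independent vectors cannot all lie in $\mathbb{Z}\mathbf{g}_0+\mathbb{Z}\mathbf{g}_1$) is exactly right. But the core step is not actually carried out, and the route you sketch for it is not the one that works.

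The paper's resolution does not use the $\Delta_{\min}$ selection rule or any ``shortest-vector'' property of $\mathbf{g}_0',\mathbf{g}_1'$ at all; in fact the Prism Lemma holds for any unimodular $\mathbf{G}^{(s)}$ satisfying the cofactor positivity of Lemma 1. The key observation is a \emph{sign} constraint, not a \emph{size} constraint on the $n_i$: write $\mathbf{h}'\in\mathbf{H}'$ as a convex combination of three of the six vertices, i.e.\ $\mathbf{h}'=y_0\varepsilon_0\mathbf{g}_0'+y_1\varepsilon_1\mathbf{g}_1'+y_2\varepsilon_2\mathbf{g}_2'$ with $y_i\geq 0$, $\sum y_i=1$, $\varepsilon_i=\pm 1$. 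Then $(n_i-y_i\varepsilon_i)_i$ annihilates $(\mathbf{g}_0',\mathbf{g}_1',\mathbf{g}_2')$, but so does the cofactor vector $(z_0,z_1,z_2)$ with all $z_i>0$, by projecting Lemma 1 b) onto $\mathbb{P}$. The linear-dependence relation among three vectors in a plane is unique up to scale, so $n_i-y_i\varepsilon_i=\lambda z_i$ for a single real $\lambda$; hence all three quantities $n_i-y_i\varepsilon_i$ have the same sign, and a short case check (using $y_i\varepsilon_i\in[-1,1]$) shows $n_0,n_1,n_2$ themselves all have the same sign. That kills cancellation: $\|\mathbf{h}''\|=|n_0|\,\|\mathbf{g}_0''\|+|n_1|\,\|\mathbf{g}_1''\|+|n_2|\,\|\mathbf{g}_2''\|$, from which all three bullets follow immediately.

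Your proposed alternative — controlling the \emph{magnitudes} of the $n_i$ via the hexagon and the algorithm's shortest-vector heuristics — would not, even if carried out, suffice: bounded coefficients can still cancel to produce a small $|\mathbf{h}\bullet\mathbf{X}|$. And the factual claim that $\mathbf{g}_0',\mathbf{g}_1'$ are shortest vectors of $\mathrm{proj}_{\mathbb{P}}(\mathbb{Z}^3)$ is not justified by the $\Delta_{\min}$ rule, which only compares pairwise differences within the current triple. The missing ingredient is precisely the cofactor relation $z_0\mathbf{g}_0'+z_1\mathbf{g}_1'+z_2\mathbf{g}_2'=\mathbf{0}$, $z_i>0$, which places the origin in the open triangle $\mathbf{g}_0'\mathbf{g}_1'\mathbf{g}_2'$ and forces the same-sign conclusion.
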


\begin{proof}
Let $\mathbf{h}$ be any non-null integer vector in $\mathbf{H}$. Since $\det
\left( \mathbf{G}^{(s)} \right) =\pm 1$, there exist three relative integers $n_{0}$, $n_{1}$%
, $n_{2}$ such that $\mathbf{h=}n_{0}\mathbf{g}_{0}+n_{1}\mathbf{g}_{1}+n_{2}%
\mathbf{g}_{2}$. Let $\mathbf{h}^{\prime }$ be the orthogonal projection of $%
\mathbf{h}$ on $\mathbb{P}$. We have $\mathbf{h}^{\prime }\mathbf{=}n_{0}%
\mathbf{g}_{0}^{\prime }+n_{1}\mathbf{g}_{1}^{\prime }+n_{2}\mathbf{g}%
_{2}^{\prime }$ and also $\mathbf{h}^{\prime }\in \mathbf{H}^{\prime }$,
which is the convex hull of six points. But an easy geometrical study shows
that, in the plane, if $\mathbf{p}$ is in the convex hulls of six points, it
has to be in the convex hull of three among these six points. In this proof,
"positive" will mean: "$\geq 0$", and not "$>0".$

Then $\mathbf{h}^{\prime }$ is the positive barycenter of three points among 
$\mathbf{g}_{0}^{\prime }$, $\mathbf{g}_{1}^{\prime },$ $\mathbf{g}%
_{2}^{\prime },$ $-\mathbf{g}_{0}^{\prime }$, $-\mathbf{g}_{1}^{\prime }$, $-%
\mathbf{g}_{2}^{\prime }.$ This means that $\mathbf{h}^{\prime }$ is the
positive barycenter of three points of the shape $\varepsilon _{0}\mathbf{g}%
_{0}^{\prime }$, $\varepsilon _{1}\mathbf{g}_{1}^{\prime }$, $\varepsilon
_{2}\mathbf{g}_{2}^{\prime }$, with $\varepsilon _{i}\in \left\{
-1,1\right\} $. Then there exist positive real numbers $y_{0}$, $y_{1}$, $%
y_{2}$ such that $y_{0}+y_{1}+y_{2}=1$ and such that

$\mathbf{h}^{\prime }=y_{0}\varepsilon _{0}\mathbf{g}_{0}^{\prime
}+y_{1}\varepsilon _{1}\mathbf{g}_{1}^{\prime }+y_{2}\varepsilon _{2}\mathbf{%
g}_{2}^{\prime }=n_{0}\mathbf{g}_{0}^{\prime }+n\mathbf{g}_{1}^{\prime
}+n_{2}\mathbf{g}_{2}^{\prime }$.

Then $\left( n_{0}-y_{0}\varepsilon _{0}\right) \mathbf{g}_{0}^{\prime
}+\left( n_{1}-y_{1}\varepsilon _{1}\right) \mathbf{g}_{1}^{\prime }+\left(
n_{2}-y_{2}\varepsilon _{2}\right) \mathbf{g}_{2}^{\prime }=\mathbf{0}$. But
we have also $\left\Vert \mathbf{X}\right\Vert \left\Vert \mathbf{b}_{0}^{\prime \prime
}{}^{(s)}\right\Vert \mathbf{g}_{0}^{(s)}+\left\Vert \mathbf{X}\right\Vert
\left\Vert \mathbf{b}_{1}^{\prime \prime }{}^{(s)}\right\Vert \mathbf{g}%
_{1}^{(s)}+\left\Vert \mathbf{X}\right\Vert \left\Vert \mathbf{b}%
_{2}^{\prime \prime }{}^{(s)}\right\Vert \mathbf{g}_{2}^{(s)}=\mathbf{X,}$
like in the first Lemma of the Section 2, and then, with $z_{i}=\left\Vert 
\mathbf{X}\right\Vert \left\Vert \mathbf{b}_{0}^{\prime \prime
}{}^{(s)}\right\Vert $, we have:\newline$%
z_{0}\mathbf{g}_{0}^{\prime }+z_{1}\mathbf{g}_{1}^{\prime }+z_{2}\mathbf{g}%
_{2}^{\prime }=\mathbf{0}$, with $z_{i}>0$, by orthogonal projection on $\mathbb{P}$.

By the uniqueness of the barycentrical coordinates, up to a multiplicative
coefficient, we get: for some real number $\lambda $,

$n_{0}-y_{0}\varepsilon _{0}=\lambda z_{0}$; $n_{1}-y_{1}\varepsilon
_{1}=\lambda z_{1}$; $n_{2}-y_{2}\varepsilon _{2}=\lambda z_{2}$.

Then $n_{0}-y_{0}\varepsilon _{0}$, $n_{1}-y_{1}\varepsilon _{1\text{, }%
}n_{2}-y_{2}\varepsilon _{2}$ have the same sign ($0$ has both signs).
Without loss of generality, this sign may be supposed to be positive. 
Then we have $n_{0}\geq y_{0}\varepsilon _{0}\geq -1$, $n_{1}\geq y_{1}\varepsilon
_{1}\geq -1$, $n_{2}\geq y_{2}\varepsilon _{2}\geq -1$.

- If for every $i=1,2,3$, we have $n_{i}>-1$ h, then $n_{0}$, $n_{1}$, $n_{2}$
have the same sign.

- If now, for some $i$, we have $n_{i}=y_{i}\varepsilon _{i}=-1$, then $%
y_{i}=1$, and $y_{j}=y_{k}=0$, with $\left\{ i,j,k\right\} =\left\{
0,1,2\right\} $. Moreover, $\lambda =0$, and then $n_{i}=-1$, $%
n_{j}=y_{j}\varepsilon _{j}=0$, $n_{k}=y_{k}\varepsilon _{k}=0$. Then, in
every case, $n_{0}$, $n_{1}$, $n_{2}$ have the same sign. Then $\left\Vert \mathbf{h}^{\prime \prime }\right\Vert =\left\Vert n_{0}\mathbf{g%
}_{0}^{\prime \prime }+n_{1}\mathbf{g}_{1}^{\prime \prime }+n_{2}\mathbf{g}%
_{0}^{\prime \prime }\right\Vert =\left\vert n_{0}\right\vert \left\Vert 
\mathbf{g}_{0}^{\prime \prime }\right\Vert $+$\left\vert n_{1}\right\vert
\left\Vert \mathbf{g}_{1}^{\prime \prime }\right\Vert +\left\vert
n_{2}\right\vert \left\Vert \mathbf{g}_{2}^{\prime \prime }\right\Vert $,

and the conclusions of the theorem become obvious.
\end{proof}

Let's notice that the Prism Lemma shows that in some way, our algorithm
gives \emph{best integer approximations} of the plane on $\mathbb{P}$. In
fact, each $\mathbf{g}_{0}^{(s)}$ is a best approximation, and so are, after
the $n_{0}\mathbf{g}_{0}^{(s)}$, the vector $\mathbf{g}_{1}^{(s)}$ and,
after the vectors $n_{0}\mathbf{g}_{0}^{(s)}+n_{1}\mathbf{g}_{1}^{(s)}$, the
vector $\mathbf{g}_{2}^{(s)}$. The vectors $\mathbf{g}_{0}^{(s)},\mathbf{g}%
_{1}^{(s)},\mathbf{g}_{2}^{(s)}$ are not necessarily successive best
approximations with disks of the euclidean norm in $\mathbb{P}$, but so are
they for the "hexagon" (which can be a parallelogram) $\mathbf{H}^{\prime
(s)},$ which depends on $\left( \mathbf{g}_{0}^{(s)},\mathbf{g}_{1}^{(s)},%
\mathbf{g}_{2}^{(s)}\right) $ themselves.

\begin{proof}
Let's now prove the assertion a) of Theorem 2.

We recall that, by convention,
$\left\Vert \mathbf{g}^{\prime \prime }{}_{0}^{(s+1)}\right\Vert \leq
\left\Vert \mathbf{g}^{\prime \prime }{}_{1}^{(s+1)}\right\Vert \leq
\left\Vert \mathbf{g}^{\prime \prime }{}_{2}^{(s+1)}\right\Vert $ and we
denote by $\left( \mathbf{g}_{\mathtt{I}}^{(s)},\mathbf{g}_{\mathtt{II}%
}^{(s)},\mathbf{g}_{\mathtt{III}}^{(s)}\right) $ the permutation of $\left( 
\mathbf{g}_{0}^{(s)},\mathbf{g}_{1}^{(s)},\mathbf{g}_{2}^{(s)}\right) $ such
that $\left\Vert \mathbf{g}_{\mathtt{I}}^{\prime (s)}\right\Vert $ $\leq $ $%
\left\Vert \mathbf{g}_{\mathtt{II}}^{\prime (s)}\right\Vert $ $\leq
\left\Vert \mathbf{g}_{\mathtt{III}}^{\prime (s)}\right\Vert $.

We admit the Geometrical Theorem, namely that there exists an infinite set $%
S $ of natural integers such that: $\underset{s\in S}{\sup }\frac{\left\Vert 
\mathbf{g}_{\mathtt{III}}^{\prime (s)}\right\Vert }{\rho ^{(s)}}=L<+\infty .$
This will be proved in the next section. \newline
For any $s\in S$, we consider the cylinder with center at $\mathbf{0}$, with basis in $\mathbb{P}$, 
radius $\rho ^{(s)}$, and height $\frac{8}{\pi \left( \rho ^{(s)}\right) ^{2}} $,  namely: 
\begin{equation*}
\Gamma ^{(s)}=\text{Disk}~^{\prime }\left( \rho ^{(s)}\right) +\text{%
Disk}~^{\prime \prime }\left( \frac{4}{\pi \left( \rho ^{(s)}\right) ^{2}}%
\right)
\end{equation*}%
(the second disk being in $\mathbb{D}$, and being a segment). The volume of $%
\Gamma ^{(s)}$ is $8$. Then, by Minkowski's first Theorem, there is an
integer point $\mathbf{h\neq 0}$ in $\Gamma ^{(s)}$. For this theorem, see
for instance, \cite{CasselsDA}(Cassels), Theorem IV, page 154. Then we have, by the Prism Lemma for the second inequality,%
\begin{equation*}
\left\Vert \mathbf{g}_{\mathtt{III}}^{\prime (s)}\right\Vert ^{2}\left\Vert 
\mathbf{g}_{0}^{\prime \prime \left( s\right) }\right\Vert \leq L^{2}\left(
\rho ^{(s)}\right) ^{2}\left\Vert \mathbf{g}_{0}^{\prime \prime \left(
s\right) }\right\Vert \leq L^{2}\left( \rho ^{(s)}\right) ^{2}\left\Vert 
\mathbf{h}^{\prime \prime }\right\Vert \leq \dfrac{4L^{2}}{\pi }
\end{equation*}%
or $\left\Vert \mathbf{g}_{\mathtt{III}}^{\prime (s)}\right\Vert
^{2}\left\Vert \mathbf{g}_{0}^{\prime \prime \left( s\right) }\right\Vert
\leq M$ a constant, and the main statement of \textbf{a) }is proved.
This last result, with the help of the Lemma 2 of Section 2, namely $\underset{%
s\rightarrow +\infty ,\text{ }s\in S}{\lim }\left\Vert \mathbf{g}_{\mathtt{%
III}}^{\prime (s)}\right\Vert =+\infty $, implies the last statement of 
\textbf{a), }id est $\underset{s\rightarrow +\infty ,\text{ }s\in S}{\lim }%
\left\Vert \mathbf{g}_{0}^{\prime \prime \left( s\right) }\right\Vert =0.$
\end{proof}

\subsection{Demonstration of assertions b) and c) in Theorem 2}

We shall need some well known results in Diophantine Approximation or
Geometry of Numbers.

\begin{lemma}[Transference Theorem in dim 2]
Let $1,$ $\alpha ,$ $\beta $ be three rationally independent real numbers,
let $\mathbf{X}$ be: $\mathbf{X=}$\ $^{\textsc{T}}\left( 1,\alpha ,\beta
\right) $ and let $\mathbb{D}$ be $\mathbb{D}=\mathbb{R}\mathbf{X}$. Let $%
\mathbf{h:=}\left( m,n,p\right) $ be an integral point in $\mathbb{Z}%
^{3}\setminus \left\{ \mathbf{0}\right\} $. The following six assertions are
equivalent:

\textbf{(a) \ }$\underset{\mathbf{h}\text{ }=\left( m,n,p\right) \text{ with 
}\left( n,p\right) \neq \left( 0,0\right) }{\inf }\left\vert \mathbf{X}%
\bullet \mathbf{h}\right\vert \times \left[ \max \left( \left\vert
n\right\vert ,\left\vert p\right\vert \right) \right] ^{2}>0$

\textbf{(b) \ }$\underset{\mathbf{h}\text{ }=\left( m,n,p\right) \text{ with 
}m\neq 0}{\inf }\left\vert m\right\vert \times \left[ \max \left( \left\vert
\alpha m-n\right\vert ,\left\vert \beta m-p\right\vert \right) \right]
^{2}>0 $

$\left( \text{\textbf{a}}^{\prime }\right) $\textbf{~}$\underset{\mathbf{h}%
=\left( m,n,p\right) \in \mathbb{Z}^{3}\setminus \left\{ \mathbf{0}\right\} }%
{\inf }\left\vert \mathbf{X}\bullet \mathbf{h}\right\vert \times \left[ \max
\left( \left\vert m\right\vert ,\left\vert n\right\vert ,\left\vert
p\right\vert \right) \right] ^{2}>0$

$\left( \text{\textbf{b}}^{\prime }\right) ~\underset{\mathbf{h}=\left(
m,n,p\right) \in \mathbb{Z}^{3}\setminus \left\{ \mathbf{0}\right\} }{\inf }%
\max \left( \left\vert m\right\vert ,\left\vert n\right\vert ,\left\vert
p\right\vert \right) \times \left[ \max \left( \left\vert \alpha
m-n\right\vert ,\left\vert \beta m-p\right\vert \right) \right] ^{2}>0$

\textbf{(A) \ }$\underset{\mathbf{h}\in \mathbb{Z}^{3}\setminus \left\{ 
\mathbf{0}\right\} }{\inf }\left\vert \mathbf{X}\bullet \mathbf{h}%
\right\vert \times \left\Vert \mathbf{h}\right\Vert ^{2}>0$

\textbf{(B) \ }$\underset{\mathbf{h}=\left( m,n,p\right) \in \mathbb{Z}%
^{3}\setminus \left\{ \mathbf{0}\right\} }{\inf }\left\Vert \mathbf{h}%
\right\Vert \times \left( \left( \alpha m-n\right) ^{2}+\left( \beta
m-p\right) ^{2}\right) >0$
\end{lemma}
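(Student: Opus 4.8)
The statement to prove is the Transference Theorem in dimension 2: six assertions are equivalent. This is a classical result in the Geometry of Numbers, so the plan is to prove a chain of implications linking all six, exploiting three kinds of reductions: (i) passing between the ``linear form'' formulation $|\mathbf{X}\bullet\mathbf{h}|$ and the ``simultaneous approximation'' formulation $\max(|\alpha m - n|, |\beta m - p|)$; (ii) passing between the restricted infima (with $m\neq 0$ or $(n,p)\neq(0,0)$) and the unrestricted infima over all of $\mathbb{Z}^3\setminus\{\mathbf{0}\}$; and (iii) passing between the sup-norm $\max(|m|,|n|,|p|)$ and the Euclidean norm $\|\mathbf{h}\|$. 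Reductions of type (iii) are immediate since in finite dimension all norms are equivalent, so $(\mathbf{a}')\Leftrightarrow(\mathbf{A})$ and $(\mathbf{b}')\Leftrightarrow(\mathbf{B})$ are purely formal; I would dispatch these first in one line each.

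\medskip

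Next I would handle the reductions of type (ii). For the implication from a restricted infimum to the corresponding unrestricted one, I would argue that adding the ``missing'' integer points cannot make the infimum drop to $0$: for instance, in $(\mathbf{a})\Rightarrow(\mathbf{a}')$, the points with $(n,p)=(0,0)$ are $\mathbf{h}=(m,0,0)$ with $m\neq 0$, for which $|\mathbf{X}\bullet\mathbf{h}|\cdot\max(|m|,0,0)^2 = |m|^3 \geq 1$, bounded away from $0$; so the infimum over the larger set is the minimum of a positive number and a positive number, hence positive. The reverse implication is trivial since an infimum over a larger set is $\leq$ the infimum over a subset, but actually here we need the infimum over the subset to be positive given that the full one is — which is immediate because the subset infimum is $\geq$ the full infimum only after we handle that the sup-norm on the subset differs; more carefully I would just note $(\mathbf{a}')\Rightarrow(\mathbf{a})$ follows because on the subset $(n,p)\neq(0,0)$ we have $\max(|n|,|p|)>0$ and one can compare $\max(|n|,|p|)$ with $\max(|m|,|n|,|p|)$ using that if $|m|$ is huge relative to $|n|,|p|$ then $|\mathbf{X}\bullet\mathbf{h}|$ is itself large (since $x_1,x_2 > 0$ are fixed, $|\mathbf{X}\bullet\mathbf{h}| \geq |m| x_0 - |n| x_1 - |p| x_2$, forcing the product to be large when $|m|$ dominates). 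The analogous bookkeeping handles $(\mathbf{b})\Leftrightarrow(\mathbf{b}')$.

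\medskip

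The substantive part is the reduction of type (i): the equivalence between a simultaneous-approximation statement and its ``dual'' linear-form statement, i.e. $(\mathbf{a}')\Leftrightarrow(\mathbf{b}')$ (equivalently $(\mathbf{A})\Leftrightarrow(\mathbf{B})$, or $(\mathbf{a})\Leftrightarrow(\mathbf{b})$). This is exactly the classical Khintchine--Jarník transference principle, and I would prove it via Mahler's duality / Minkowski's convex body theorem, or cite Cassels's \emph{An Introduction to Diophantine Approximation} directly (the paper already cites \cite{CasselsDA} for Minkowski's theorems). Concretely: given a putative very good simultaneous rational approximation $(m,n,p)$ to $(\alpha,\beta)$, one builds a dual lattice point realizing a small value of $|\mathbf{X}\bullet\mathbf{h}|$ by applying Minkowski's theorem to an appropriately scaled box in the dual lattice; conversely a small linear-form value produces a good simultaneous approximation. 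The exponents work out because $2 = 2\cdot\tfrac{1}{1}$ with $n=2$ variables beyond the homogeneous one: the transference exponents $d$ (here $=2$) and $1/d$ interact so that ``badly approximable'' in one sense is equivalent to ``badly approximable'' in the dual sense with the matching optimal exponent. I expect this transference step to be the main obstacle, both because it is the only one requiring Minkowski's theorem rather than elementary norm comparisons, and because getting the exponents exactly right (rather than up to $\varepsilon$) requires the sharp form of the transference inequalities; I would either invoke the precise statement from Cassels or reproduce the two-box Minkowski argument carefully. Once $(\mathbf{a}')\Leftrightarrow(\mathbf{b}')$ is in hand, combining with the type-(ii) and type-(iii) reductions closes the cycle through all six assertions.
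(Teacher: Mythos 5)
Your proposal is correct and follows essentially the same route as the paper: cite Cassels's transference theorem (\cite{CasselsDA}, Theorem II and Corollary) for the substantive equivalence between the linear-form and simultaneous-approximation formulations, and dispatch the remaining equivalences by elementary norm comparisons and by handling the excluded integer points. The only (cosmetic) slip is in your lower bound $\left\vert \mathbf{X}\bullet\mathbf{h}\right\vert \geq \left\vert m\right\vert x_0 - \left\vert n\right\vert x_1 - \left\vert p\right\vert x_2$: since $\mathbf{X}=\ ^{\textsc{T}}(1,\alpha,\beta)$ with no sign assumption on $\alpha,\beta$, the correct reverse-triangle estimate is $\left\vert m+\alpha n+\beta p\right\vert \geq \left\vert m\right\vert - \left\vert \alpha\right\vert\left\vert n\right\vert - \left\vert \beta\right\vert\left\vert p\right\vert$, which serves the same purpose.
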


\begin{proof}
The equivalence of (a) with (b) is a classical result. See for instance: 
\cite{CasselsDA} (Cassels), Theorem II and Corollary. The other equivalences are also classical and easy.
\end{proof}

\begin{lemma}[Transference Theorem, geometrical point of view]
Let $1,$ $\alpha ,$ $\beta $ be three rationally independent real numbers,
let $\mathbf{X}$ be: $\mathbf{X=}$~$^{\textsc{T}}\left( 1,\alpha ,\beta
\right) $ and let $\mathbb{D}$ be $\mathbb{D}=\mathbb{R}\mathbf{X}$. As
usual, let $\mathbf{h}^{\prime }$ and $\mathbf{h}^{\prime \prime }$ be the
orthogonal projections of $\mathbf{h}$ on $\mathbb{D}$ and $\mathbb{P=D}%
^{\perp }$. The following three assertions are equivalent, and also are
equivalent to assertions (A) and (B) of the previous Lemma.

\textbf{(C) \ }$\underset{\mathbf{h}\in \mathbb{Z}^{3}\setminus \left\{ 
\mathbf{0}\right\} }{\inf }\left\Vert \mathbf{h}^{\prime \prime }\right\Vert
\times \left\Vert \mathbf{h}\right\Vert ^{2}>0$

\textbf{(D) \ }$\underset{\mathbf{h}\in \mathbb{Z}^{3}\setminus \left\{ 
\mathbf{0}\right\} }{\inf }\left\Vert \mathbf{h}\right\Vert \times
\left\Vert \mathbf{h}^{\prime }\right\Vert ^{2}>0$

\textbf{(E) \ }$\underset{\mathbf{h}\in \mathbb{Z}^{3}\setminus \left\{ 
\mathbf{0}\right\} }{\inf }\left\Vert \mathbf{h}^{\prime \prime }\right\Vert
\times \left\Vert \mathbf{h}^{\prime }\right\Vert ^{2}>0$
\end{lemma}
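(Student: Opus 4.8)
The plan is to deduce all the equivalences from the previous Lemma by means of a short geometric--arithmetic dictionary, every comparison being up to positive multiplicative constants depending only on $\alpha,\beta$. Throughout I write $\mathbf{h}^{\prime\prime}$ for the orthogonal projection of $\mathbf{h}$ on $\mathbb{D}=\mathbb{R}\mathbf{X}$ and $\mathbf{h}^{\prime}$ for the projection on $\mathbb{P}=\mathbf{X}^{\perp}$ (the convention of the Prism Lemma and of Lemma 1 of Section 2). First I would record the dictionary. (i) Since the projection of $\mathbf{h}$ on the line $\mathbb{R}\mathbf{X}$ is $(\mathbf{X}\bullet\mathbf{h})\mathbf{X}/\|\mathbf{X}\|^{2}$, one has the exact equality $\|\mathbf{h}^{\prime\prime}\|=|\mathbf{X}\bullet\mathbf{h}|/\|\mathbf{X}\|$. (ii) $\|\mathbf{h}\|$ is equivalent to $\max(|m|,|n|,|p|)$ by equivalence of norms on $\mathbb{R}^{3}$. (iii) Taking the basis $\mathbf{u}_{1}={}^{\textsc{T}}(\alpha,-1,0)$, $\mathbf{u}_{2}={}^{\textsc{T}}(\beta,0,-1)$ of $\mathbb{P}$ and noting that $\mathbf{h}$ and $\mathbf{h}^{\prime}$ have the same scalar products with $\mathbf{u}_{1},\mathbf{u}_{2}$, namely $\alpha m-n$ and $\beta m-p$ (because $\mathbf{h}^{\prime\prime}\bullet\mathbf{u}_i=0$), a one-line computation with the constant, positive definite Gram matrix $G$ of $(\mathbf{u}_{1},\mathbf{u}_{2})$ gives $\|\mathbf{h}^{\prime}\|^{2}={}^{\textsc{T}}\mathbf{w}\,G^{-1}\mathbf{w}$ with $\mathbf{w}={}^{\textsc{T}}(\alpha m-n,\beta m-p)$; hence $\|\mathbf{h}^{\prime}\|$ is equivalent to $\max(|\alpha m-n|,|\beta m-p|)$.

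With this dictionary, assertion \textbf{(C)} becomes, up to the factor $\|\mathbf{X}\|$, exactly assertion $(\mathbf{a}^{\prime})$ of the previous Lemma, hence $\textbf{(C)}\Leftrightarrow\textbf{(A)}$; and assertion \textbf{(D)} becomes exactly assertion $(\mathbf{b}^{\prime})$ there, hence $\textbf{(D)}\Leftrightarrow\textbf{(B)}$. Since that Lemma already establishes the chain joining $(\mathbf{a}^{\prime})$, $(\mathbf{b}^{\prime})$, \textbf{(A)}, \textbf{(B)}, we get $\textbf{(C)}\Leftrightarrow\textbf{(D)}\Leftrightarrow\textbf{(A)}\Leftrightarrow\textbf{(B)}$ at once.

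It then remains only to fit in \textbf{(E)}, and for this no arithmetic is needed, just the Pythagorean identity $\|\mathbf{h}\|^{2}=\|\mathbf{h}^{\prime}\|^{2}+\|\mathbf{h}^{\prime\prime}\|^{2}$. It gives $\|\mathbf{h}^{\prime}\|\le\|\mathbf{h}\|$ and $\|\mathbf{h}^{\prime\prime}\|\le\|\mathbf{h}\|$, so $\|\mathbf{h}^{\prime\prime}\|\,\|\mathbf{h}^{\prime}\|^{2}$ is at most both $\|\mathbf{h}^{\prime\prime}\|\,\|\mathbf{h}\|^{2}$ and $\|\mathbf{h}\|\,\|\mathbf{h}^{\prime}\|^{2}$; consequently \textbf{(E)} implies \textbf{(C)} and implies \textbf{(D)}. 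For the reverse implication I would use that \textbf{(C)} and \textbf{(D)} hold simultaneously (by the chain just established) and split each nonzero integer $\mathbf{h}$ according to whether $\|\mathbf{h}^{\prime}\|\ge\|\mathbf{h}\|/\sqrt{2}$ or $\|\mathbf{h}^{\prime\prime}\|\ge\|\mathbf{h}\|/\sqrt{2}$; one of the two must hold, again by Pythagoras. In the first case $\|\mathbf{h}^{\prime\prime}\|\,\|\mathbf{h}^{\prime}\|^{2}\ge\frac{1}{2}\|\mathbf{h}^{\prime\prime}\|\,\|\mathbf{h}\|^{2}$, which is bounded below by \textbf{(C)}; in the second $\|\mathbf{h}^{\prime\prime}\|\,\|\mathbf{h}^{\prime}\|^{2}\ge\frac{1}{\sqrt{2}}\|\mathbf{h}\|\,\|\mathbf{h}^{\prime}\|^{2}$, bounded below by \textbf{(D)}. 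In both cases the infimum in \textbf{(E)} is positive, so \textbf{(E)} holds, which closes the circle.

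I do not expect any genuine obstacle: the whole proof is the dictionary, the previous Lemma, and the two-case Pythagorean estimate. The only point asking for a moment's attention is item (iii) of the dictionary, i.e. the equivalence of $\|\mathbf{h}^{\prime}\|$ with $\max(|\alpha m-n|,|\beta m-p|)$, which rests entirely on the positive definiteness of the fixed $2\times 2$ matrix $G^{-1}$; should one prefer, the same conclusion can be read off directly from $\|\mathbf{h}^{\prime}\|^{2}=\|\mathbf{h}\|^{2}-|\mathbf{X}\bullet\mathbf{h}|^{2}/\|\mathbf{X}\|^{2}$ together with the already-proved equivalence $\textbf{(D)}\Leftrightarrow\textbf{(B)}$.
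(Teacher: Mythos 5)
Your argument is correct and is precisely the elaboration the paper leaves implicit when it writes only ``the proofs are easy'': the exact identity $\left\Vert \mathbf{h}^{\prime \prime }\right\Vert =\left\vert \mathbf{X}\bullet \mathbf{h}\right\vert /\left\Vert \mathbf{X}\right\Vert $, the norm equivalence $\left\Vert \mathbf{h}^{\prime }\right\Vert \asymp \max \left( \left\vert \alpha m-n\right\vert ,\left\vert \beta m-p\right\vert \right) $ via the fixed positive definite Gram matrix, and the Pythagorean two-case split are exactly what is needed to reduce (C), (D), (E) to $(\mathbf{a}^{\prime })$, $(\mathbf{b}^{\prime })$ of the previous lemma. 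No gaps; the only stylistic remark is that your observation at the end — deducing (iii) directly from $\left\Vert \mathbf{h}^{\prime }\right\Vert ^{2}=\left\Vert \mathbf{h}\right\Vert ^{2}-\left\vert \mathbf{X}\bullet \mathbf{h}\right\vert ^{2}/\left\Vert \mathbf{X}\right\Vert ^{2}$ — is not by itself enough to give the equivalence with $\max \left( \left\vert \alpha m-n\right\vert ,\left\vert \beta m-p\right\vert \right) $, so the Gram-matrix computation (or an equivalent coordinate argument) remains the essential step.
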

Again, the proofs are easy.
\begin{definition}
When one of the assertions (a), (b), (a'), (b'), (A), (B), (C), (D), (E) of
Lemmas 7 and 8 is true (id est, all of them), it will be said that \emph{the
couple }$\left( \mathbb{P},\mathbb{D}\right) $\emph{\ is badly approximable. 
}
\end{definition}

We shall also need famous Minkowski's theorem on successive minima.

\begin{lemma}[ Minkowski's Successive Minima Theorem]
For any convex set $E$ in $\mathbb{R}^{3}$ which is symmetric about $\mathbf{%
0}$, let $\Lambda _{i}\left( E\right) $, for $i=0,1$ or $2$, be the lower
bound of the numbers $\lambda $ such that $\lambda E$ contains $\left(
i+1\right) $ linearly independent integer vectors. Then, if the volume of $E$
is $8$, $\frac{1}{6}\leq \Lambda _{0}\left( E\right) \Lambda _{1}\left(
E\right) \Lambda _{2}\left( E\right) \leq 1$ holds. See Cassels \cite%
{Cassels} (Cassels) Ch. VIII, page 201 and following, especially assertions $%
\left\{ 12\right\} $ and $\left\{ 13\right\} $ page 203, or \cite{CasselsDA}
(Cassels), Theorem V page 156.
\end{lemma}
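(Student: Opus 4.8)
The statement is the classical theorem of Minkowski on successive minima, restricted to $\mathbb{R}^{3}$ and to a symmetric convex body of volume $8$; in its general form it reads $\frac{2^{n}}{n!}\le \Lambda_{0}(E)\cdots \Lambda_{n-1}(E)\operatorname{vol}(E)\le 2^{n}$, and for $n=3$, $\operatorname{vol}(E)=8$ this is exactly $\frac{1}{6}\le \Lambda_{0}(E)\Lambda_{1}(E)\Lambda_{2}(E)\le 1$. I would treat the two inequalities separately, since they have very different depth.

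For the lower bound I would argue directly. First choose linearly independent integer vectors $\mathbf{v}_{0},\mathbf{v}_{1},\mathbf{v}_{2}$ realising the minima: let $\mathbf{v}_{i}$ be an integer vector of least Minkowski gauge among those not lying in $\operatorname{span}(\mathbf{v}_{0},\dots,\mathbf{v}_{i-1})$, so that $\mathbf{v}_{i}\in \Lambda_{i}(E)\overline{E}$ (the closed body realises the gauge) and the three vectors are independent. Let $T$ be the linear map with $T\mathbf{v}_{i}=\Lambda_{i}(E)\mathbf{e}_{i}$. Since $\mathbf{v}_{i}/\Lambda_{i}(E)\in\overline{E}$ and $\overline{E}$ is convex and symmetric about $\mathbf{0}$, the body $T(\overline{E})$ contains $\pm\mathbf{e}_{0},\pm\mathbf{e}_{1},\pm\mathbf{e}_{2}$, hence contains the octahedron $\{\mathbf{x}:|x_{0}|+|x_{1}|+|x_{2}|\le 1\}$, whose volume is $2^{3}/3!=4/3$. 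On the other hand $\det T=\Lambda_{0}(E)\Lambda_{1}(E)\Lambda_{2}(E)/\det(\mathbf{v}_{0},\mathbf{v}_{1},\mathbf{v}_{2})$, and $|\det(\mathbf{v}_{0},\mathbf{v}_{1},\mathbf{v}_{2})|$ is a positive integer, so $|\det T|\le \Lambda_{0}(E)\Lambda_{1}(E)\Lambda_{2}(E)$ and $\operatorname{vol}(T(\overline{E}))=8|\det T|\le 8\Lambda_{0}(E)\Lambda_{1}(E)\Lambda_{2}(E)$. Comparing the two estimates of $\operatorname{vol}(T(\overline{E}))$ forces $8\Lambda_{0}(E)\Lambda_{1}(E)\Lambda_{2}(E)\ge 4/3$, i.e.\ the claimed $\Lambda_{0}(E)\Lambda_{1}(E)\Lambda_{2}(E)\ge \frac{1}{6}$.

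For the upper bound $\Lambda_{0}(E)\Lambda_{1}(E)\Lambda_{2}(E)\le 1$ (equivalently $\Lambda_{0}(E)\Lambda_{1}(E)\Lambda_{2}(E)\operatorname{vol}(E)\le 2^{3}$) the same linear-map trick is useless, since it only yields an inequality in the wrong direction; this is where the genuine content of Minkowski's second theorem lies, and it is the step I expect to be the main obstacle. I would prove it by induction on the dimension: after passing to the quotient $\mathbb{R}^{3}/\mathbb{R}\mathbf{v}_{0}$ equipped with its image lattice one must compare the minima and the volume upstairs with those downstairs, and the heart of the matter is a volume-preserving piecewise-linear ``compression'' adapted to the flag $\operatorname{span}(\mathbf{v}_{0})\subset\operatorname{span}(\mathbf{v}_{0},\mathbf{v}_{1})\subset\mathbb{R}^{3}$, which straightens $E$ enough that Minkowski's first (convex-body) theorem can be applied successively in dimensions $1,2,3$ without loss of volume. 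That bookkeeping is delicate but entirely classical; since only the statement of Lemma~10 is used in the sequel, for the paper I would simply cite Cassels (\emph{Geometry of Numbers}, Ch.~VIII, assertions $\{12\}$ and $\{13\}$, p.~203; or \emph{Diophantine Approximation}, Thm.~V, p.~156).
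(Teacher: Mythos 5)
The paper itself offers no proof of this lemma: it is stated purely by citation to Cassels (\emph{Geometry of Numbers}, Ch.~VIII, or \emph{Diophantine Approximation}, Thm.~V). Your decision to cite for the upper bound therefore matches the paper exactly, and your self-contained proof of the lower bound is a correct and welcome supplement. The octahedron argument is right: with $T\mathbf{v}_{i}=\Lambda_{i}\mathbf{e}_{i}$, the image $T(\overline{E})$ contains the cross-polytope of volume $4/3$, while $\operatorname{vol}(T(\overline{E}))=8\,|\det T|\le 8\,\Lambda_{0}\Lambda_{1}\Lambda_{2}$ since $|\det(\mathbf{v}_{0},\mathbf{v}_{1},\mathbf{v}_{2})|$ is a nonzero integer; comparing gives $\Lambda_{0}\Lambda_{1}\Lambda_{2}\ge 1/6$. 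Your candid assessment that the upper bound is the genuine content of Minkowski's second theorem — and that the volume-preserving compression argument is where the real work lies — is accurate, and deferring it to Cassels is exactly what the paper does. One small remark: for the purposes of this paper (the proof of parts~b) and~c) of Theorem~2), only the upper bound $\Lambda_{0}\Lambda_{1}\Lambda_{2}\le 1$ is actually used, so ironically the direction you prove is the one the paper never invokes; but including both bounds as stated is harmless and mirrors the cited source.
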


\begin{proof}[Proof of the\ b ) and c)\textbf{\ }of Th. 2 (Dirichlet
Properties)]
We suppose
\newline
 that the Hypothesis of the \textbf{b)} Property of Theorem 2
holds: there exists $c>0$ such that for any non-null integer point $\mathbf{h%
},$ $\left\Vert \mathbf{h}\right\Vert ^{2}\left\Vert \mathbf{h}^{\prime
\prime }\right\Vert >c$. Then, by our Geometrical Transference Lemma, there
exists $d>0$ such that for any non-null integer point $\mathbf{h},\left\Vert 
\mathbf{h}^{\prime }\right\Vert ^{2}\left\Vert \mathbf{h}^{\prime \prime
}\right\Vert >d$.

Like above, let $\Gamma _{R}$ be the cylinder: $\Gamma _{R}=\text{Disk}%
^{\prime }\left( R\right) +\text{Disk}^{\prime \prime }\left( \dfrac{4}{%
\pi R^{2}}\right) $.

Let's define $K_{0}=\left( \dfrac{\pi d}{4}\right) ^{\frac{1}{3}}$. \ The
inequality $\left\Vert \mathbf{h}^{\prime }\right\Vert ^{2}\left\Vert 
\mathbf{h}^{\prime \prime }\right\Vert >d$ implies that there's no non-null
integer point in $K_{0}\Gamma _{R}$. Then, for any $R$, $\Lambda _{0}\left(
\Gamma _{R}\right) \geq K_{0}$. In addition, we have $\Lambda _{0}\left(
\Gamma _{R}\right) \Lambda _{1}\left( \Gamma _{R}\right) \Lambda _{2}\left(
\Gamma _{R}\right) \leq 1$ and also $\Lambda _{1}\left( \Gamma _{R}\right)
\geq \Lambda _{0}\left( \Gamma _{R}\right) \geq K_{0}$ , so that we get $%
K_{0}^{2}\Lambda _{2}\left( \Gamma _{R}\right) \leq 1$ and then $\Lambda
_{2}\left( \Gamma _{R}\right) <K_{2}$, with $K_{2}=\dfrac{2}{K_{0}^{2}}.$
Then, for each $R$, the cylinder $K_{2}\Gamma _{R}$ contains a free triplet
of integer vectors $\left( \mathbf{h}_{0},\mathbf{h}_{1},\mathbf{h}%
_{2}\right)$. \newline
Let now be $s$ in $S$, verifying $\frac{\left\Vert \mathbf{g}_{\mathtt{III}%
}^{\prime (s)}\right\Vert }{\rho ^{(s)}}<L$. We may suppose $R=\dfrac{\rho
^{(s)}}{K_{2}}$, so that $K_{2}\Gamma _{R}=\text{Disk}^{\prime }\left(
\rho ^{(s)}\right) +\text{Disk}^{\prime \prime }\left( \dfrac{4K_{2}^{3}}{%
\pi \left( \rho ^{(s)}\right) ^{2}}\right) $, which contains a free triplet
of integer vectors $\left( \mathbf{h}_{0},\mathbf{h}_{1},\mathbf{h}%
_{2}\right) $, but the basis of which, $\text{Disk}^{\prime }\left( \rho
^{(s)}\right) $ is contained in the "hexagon" $\mathbf{H}^{\prime (s)}$
generated by our algorithm.  Then, by the Prism Lemma,\newline
$\left\Vert \mathbf{g}_{\mathtt{III}}^{\prime (s)}\right\Vert ^{2}\left\Vert 
\mathbf{g}_{2}^{\prime \prime \left( s\right) }\right\Vert \leq L^{2}\left(
\rho ^{(s)}\right) ^{2}\left\Vert \mathbf{g}_{2}^{\prime \prime \left(
s\right) }\right\Vert \ \leq L^{2}\left( \rho ^{(s)}\right) ^{2}\left( 
\underset{i=0,1,2}{\max }\left\Vert \mathbf{h}_{i}^{\prime \prime \left(
s\right) }\right\Vert \right) \leq$ \newline
$ \dfrac{4K_{2}^{3}L^{2}}{\pi } $
and the main statement of \textbf{b) }is proved. The second statement
follows from this very result and from $\underset{s\rightarrow +\infty ,%
\text{ }s\in S}{\lim }\left\Vert \mathbf{g}_{\mathtt{III}}^{\prime
(s)}\right\Vert =+\infty $ \ from Lemma 2 at the beginning of Section 2.

Let's now verify the \textbf{c)} Property. We've just proved that under the Geometrical Theorem, for some $M$, $\underset{s\in S}{\sup }\left( \left\Vert \mathbf{g}_{\mathtt{III}}^{\prime
(s)}\right\Vert ^{2}\left\Vert \mathbf{g}^{\prime \prime
}{}_{2}^{(s)}\right\Vert \right) \leq M$ \ holds$.$\newline
Now, if $\varepsilon _{s}=\det \left( \mathbf{B}^{(s)}\right) =\det \left( 
\mathbf{G}^{(s)}\right) =\pm 1$, and if $\left( i,j,k\right) $ is a direct circular permutation of $\left( 0,1,2\right) $, forgetting the indices, we have:\newline
$\mathbf{b}_{i}=\varepsilon \left( \mathbf{g}_{j}\wedge \mathbf{g}%
_{k}\right) $; $\mathbf{b}_{i}^{\prime }=\varepsilon \left( \mathbf{g}%
_{j}^{\prime \prime }\wedge \mathbf{g}_{k}^{\prime }+\mathbf{g}_{j}^{\prime
}\wedge \mathbf{g}_{k}^{\prime \prime }\right) $;$\ \mathbf{b}_{i}^{\prime
\prime }=\varepsilon \left( \mathbf{g}_{j}^{\prime }\wedge \mathbf{g}%
_{k}^{\prime }\right) $.

Then for each $i$, $\left\Vert \mathbf{b}_{i}^{\prime }\right\Vert \leq
2\left\Vert \mathbf{g}_{\mathtt{III}}^{\prime }\right\Vert \left\Vert 
\mathbf{g}^{\prime \prime }{}_{2}\right\Vert $, and $\left\Vert \mathbf{b}%
_{i}^{\prime \prime }\right\Vert \leq \left\Vert \mathbf{g}_{\mathtt{III}%
}^{\prime }\right\Vert ^{2}$. Then

$\left( \underset{i=0,1,2}{\max }\left\Vert \mathbf{b}_{i}^{\prime
(s)}\right\Vert \right) ^{2}\left( \underset{i=0,1,2}{\max }\left\Vert 
\mathbf{b}^{\prime \prime }{}_{i}^{(s)}\right\Vert \right) \leq 4\left(
\left\Vert \mathbf{g}_{\mathtt{III}}^{\prime (s)}\right\Vert ^{2}\left\Vert 
\mathbf{g}_{2}^{\prime \prime }{}^{(s)}\right\Vert \right) ^{2}\leq 4M^{2}$,
and the main conclusion of the Part \textbf{c)} is proved. Furthermore, we
have seen that for each $s$ and each $i$, $\left\Vert \mathbf{b}_{i}^{\prime
(s)}\right\Vert \leq 2\left\Vert \mathbf{g}_{\mathtt{III}}^{\prime
(s)}\right\Vert \left\Vert \mathbf{g}_{2}^{\prime \prime (s)}\right\Vert $;
in addition:

$\left\Vert \mathbf{g}_{\mathtt{III}}^{\prime (s)}\right\Vert ^{2}\left\Vert 
\mathbf{g}_{2}^{\prime \prime (s)}\right\Vert \leq M$ holds. Then, for each $%
s\in S$, $\left\Vert \mathbf{b}_{i}^{\prime (s)}\right\Vert \leq \frac{2M}{%
\left\Vert \mathbf{g}_{\mathtt{III}}^{\prime (s)}\right\Vert }$.

But, by the Lemma 2 of \S 2.1, $\underset{s\rightarrow +\infty }{\lim }%
\left\Vert \mathbf{g}_{\mathtt{III}}^{\prime (s)}\right\Vert =+\infty $.

Then $\underset{s\rightarrow +\infty ,s\in S}{\lim }\left( \underset{i=0,1,2}%
{\max }\left\Vert \mathbf{b}_{i}^{\prime (s)}\right\Vert \right) =0$ and the
second part of \textbf{c)} is proved.
\end{proof}

\section{Demonstration of the Geometrical Theorem of \S 1.3.}

The demonstration of the Geometrical Theorem, Theorem 3. in Subsection 1.3.,
involves only very elementary geometry, but is a little long. In order to
prove it, we first need some auxiliary sets and definitions.

\subsection{The area $A^{(s)}$ and the set $T$ (advances of $\left\Vert 
\mathbf{g}_{\mathtt{III}}^{\prime (s)}\right\Vert $)}

\begin{notation}
* We'll denote by $A^{(s)}$ \emph{twice }the area of the triangle 
\newline
$\left( 
\mathbf{g}_{0}^{\prime (s)}\mathbf{g}_{1}^{\prime (s)}\mathbf{g}_{2}^{\prime
(s)}\right) $.\newline
* We'll denote by $\left( \mathbf{g}_{\mathtt{I}}^{\prime (s)},\mathbf{g}_{%
\mathtt{II}}^{\prime (s)},\mathbf{g}_{\mathtt{III}}^{\prime (s)}\right) $
the permutation of $\left( \mathbf{g}_{0}^{\prime (s)},\mathbf{g}%
_{1}^{\prime (s)},\mathbf{g}_{2}^{\prime (s)}\right) $ such that $\left\Vert 
\mathbf{g}_{\mathtt{I}}^{\prime (s)}\right\Vert \leq \left\Vert \mathbf{g}_{%
\mathtt{II}}^{\prime (s)}\right\Vert \leq \left\Vert \mathbf{g}_{\mathtt{III}%
}^{\prime (s)}\right\Vert $.\newline
*In the Smallest Vector Algorithm, let's denote by $T$ the set of all
integers $s$ such that $\left\Vert \mathbf{g}_{\mathtt{III}}^{\prime
(s)}\right\Vert <\left\Vert \mathbf{g}_{\mathtt{III}}^{\prime
(s+1)}\right\Vert $.\newline
* The set $T$ is infinite, because by Lemma 2, we have: $\underset{%
s\rightarrow +\infty }{\lim }\left\Vert \mathbf{g}_{\mathtt{III}}^{\prime
(s)}\right\Vert =+\infty $.
\end{notation}

\begin{remark}
\textbf{a)} $A^{(s)}=\left\Vert \mathbf{g}_{\mathtt{I}}^{\prime (s)}\wedge 
\mathbf{g}_{\mathtt{II}}^{\prime (s)}+\mathbf{g}_{\mathtt{II}}^{\prime
(s)}\wedge \mathbf{g}_{\mathtt{III}}^{\prime (s)}+\mathbf{g}_{\mathtt{III}%
}^{\prime (s)}\wedge \mathbf{g}_{\mathtt{I}}^{\prime (s)}\right\Vert $;

\textbf{b)} It's easy to establish that for some $i$ and $j$ among $\left\{
0,1,2\right\} $, we have $A^{(s+1)}=A^{(s)}+\left\Vert \mathbf{g}%
_{i}^{\prime (s)}\wedge \mathbf{g}_{j}^{\prime (s)}\right\Vert $; \textbf{c) 
}Then the sequence $\left( A^{(s)}\right) _{s\in \mathbb{N}}$ is increasing.
\end{remark}

\begin{notation}
In this paper, for two non null vectors $\mathbf{a}$ and $\mathbf{b}$ in $%
\mathbb{R}
^{3}$, we shall consider the angle of these two vectors corresponding to the
canonical euclidean norm, and the measure of this angle which belongs to $%
\left] -\pi ;\pi \right] $. This measure will be denoted either by "$%
\measuredangle \left( \mathbf{a},\mathbf{b}\right) "$, or, more simply, when
no ambiguity can occur, by "$\left( \mathbf{a},\mathbf{b}\right) "$.
\end{notation}

\begin{lemma}[Geometry on $T$]
In the Smallest Vector Algorithm, for any $s\in T$, $\left\Vert \mathbf{g}_{%
\mathtt{II}}^{\prime (s)}\right\Vert >\dfrac{1}{2}\left\Vert \mathbf{g}_{%
\mathtt{III}}^{\prime (s)}.\right\Vert .$ Moreover, for $i,$ $j\in \left\{
0,1,2\right\} $ we have: $\dfrac{\pi }{3}<\left\vert \measuredangle \left( 
\mathbf{g}_{i}^{\prime (s)},\mathbf{g}_{j}^{\prime (s)}\right) \right\vert $.
\end{lemma}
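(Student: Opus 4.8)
The plan is to first extract from the definition of the Smallest Vector Algorithm an exact description of what $s\in T$ means. Going from step $s$ to step $s+1$, the three columns of $\mathbf{G}^{(s+1)}$ are, in some order, two of the columns $\mathbf{g}_i^{(s)}$ together with a single difference $\mathbf{g}_i^{(s)}-\mathbf{g}_j^{(s)}$; inspecting the three cases of the definition, the difference that is used is exactly the one whose projection on $\mathbb{P}$ has length $\Delta_{\min}$. Projecting on $\mathbb{P}$ (which does not change under permutation of columns), the three lengths $\|\mathbf{g}_k^{\prime(s+1)}\|$ are therefore two of the $\|\mathbf{g}_k^{\prime(s)}\|$ and the number $\Delta_{\min}$; since the two surviving lengths are $\le\|\mathbf{g}_{\mathtt{III}}^{\prime(s)}\|$, we get $\|\mathbf{g}_{\mathtt{III}}^{\prime(s+1)}\|\le\max\bigl(\|\mathbf{g}_{\mathtt{III}}^{\prime(s)}\|,\Delta_{\min}\bigr)$. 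Hence, if $s\in T$, the defining inequality $\|\mathbf{g}_{\mathtt{III}}^{\prime(s+1)}\|>\|\mathbf{g}_{\mathtt{III}}^{\prime(s)}\|$ forces $\Delta_{\min}>\|\mathbf{g}_{\mathtt{III}}^{\prime(s)}\|$, and since $\Delta_{\min}$ is the smallest of the three pairwise differences, \emph{all three} differences $\|\mathbf{g}_i^{\prime(s)}-\mathbf{g}_j^{\prime(s)}\|$ (with $i\neq j$) exceed $\|\mathbf{g}_{\mathtt{III}}^{\prime(s)}\|$. This single fact drives both conclusions.

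The first inequality is then immediate from the triangle inequality. If we had $\|\mathbf{g}_{\mathtt{II}}^{\prime(s)}\|\le\frac{1}{2}\|\mathbf{g}_{\mathtt{III}}^{\prime(s)}\|$, then also $\|\mathbf{g}_{\mathtt{I}}^{\prime(s)}\|\le\frac{1}{2}\|\mathbf{g}_{\mathtt{III}}^{\prime(s)}\|$, whence $\|\mathbf{g}_{\mathtt{I}}^{\prime(s)}-\mathbf{g}_{\mathtt{II}}^{\prime(s)}\|\le\|\mathbf{g}_{\mathtt{I}}^{\prime(s)}\|+\|\mathbf{g}_{\mathtt{II}}^{\prime(s)}\|\le\|\mathbf{g}_{\mathtt{III}}^{\prime(s)}\|$; but $\{\mathtt{I},\mathtt{II}\}$ is a pair of indices in $\{0,1,2\}$, so this contradicts the previous paragraph. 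Therefore $\|\mathbf{g}_{\mathtt{II}}^{\prime(s)}\|>\frac{1}{2}\|\mathbf{g}_{\mathtt{III}}^{\prime(s)}\|$.

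For the angular statement, fix $i\neq j$, set $m=\|\mathbf{g}_{\mathtt{III}}^{\prime(s)}\|$, and write $\mathbf{u}=\mathbf{g}_i^{\prime(s)}$, $\mathbf{v}=\mathbf{g}_j^{\prime(s)}$; these are nonzero, since a nonzero integer vector cannot be proportional to $\mathbf{X}$, which is rationally independent. We have $\|\mathbf{u}\|,\|\mathbf{v}\|\le m<\|\mathbf{u}-\mathbf{v}\|$. Suppose for contradiction that $|\measuredangle(\mathbf{u},\mathbf{v})|\le\pi/3$, i.e. $\cos\measuredangle(\mathbf{u},\mathbf{v})\ge\frac{1}{2}$. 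Then by the law of cosines, together with the elementary bound $x^2+y^2-xy\le\max(x^2,y^2)$ valid for $x,y\ge0$,
\begin{align*}
\|\mathbf{u}-\mathbf{v}\|^2&=\|\mathbf{u}\|^2+\|\mathbf{v}\|^2-2\|\mathbf{u}\|\,\|\mathbf{v}\|\cos\measuredangle(\mathbf{u},\mathbf{v})\\
&\le\|\mathbf{u}\|^2+\|\mathbf{v}\|^2-\|\mathbf{u}\|\,\|\mathbf{v}\|\le\max(\|\mathbf{u}\|^2,\|\mathbf{v}\|^2)\le m^2,
\end{align*}
so $\|\mathbf{u}-\mathbf{v}\|\le m$, a contradiction. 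Hence $|\measuredangle(\mathbf{g}_i^{\prime(s)},\mathbf{g}_j^{\prime(s)})|>\pi/3$ for all $i\neq j$.

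The only step requiring real care is the first one: one must unwind the case distinction in the definition of the algorithm to confirm that, from $s$ to $s+1$, two of the three projected vectors survive unchanged while the third is replaced precisely by the pairwise difference of minimal length. Everything after that is forced by the triangle inequality and the law of cosines, and is essentially routine.
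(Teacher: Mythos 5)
Your proof is correct and follows essentially the same route as the paper: reduce to the observation that on $T$ every pairwise difference $\|\mathbf{g}_i^{\prime(s)}-\mathbf{g}_j^{\prime(s)}\|$ exceeds $\|\mathbf{g}_{\mathtt{III}}^{\prime(s)}\|$, then apply the law of cosines for the angular bound and the triangle inequality for the norm-ratio bound. The one genuine improvement in your write-up is that you actually derive that observation from the algorithm's definition (via $\|\mathbf{g}_{\mathtt{III}}^{\prime(s+1)}\|\le\max(\|\mathbf{g}_{\mathtt{III}}^{\prime(s)}\|,\Delta_{\min})$, which forces $\Delta_{\min}>\|\mathbf{g}_{\mathtt{III}}^{\prime(s)}\|$ on $T$), whereas the paper simply asserts $\|\mathbf{g}_{\mathtt{III}}^{\prime(s+1)}\|\le\|\mathbf{g}_i^{\prime(s)}-\mathbf{g}_j^{\prime(s)}\|$ for all $i\neq j$ without comment — an inequality that is not true at an arbitrary step and holds precisely because $s\in T$, for the reason you supply.
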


\begin{proof}
Let $i,j$ $\in $ $\left\{ 0,1,2\right\} $, $i\neq j$. We have $\left\Vert 
\mathbf{g}_{\mathtt{III}}^{\prime (s+1)}\right\Vert \leq \left\Vert \mathbf{g%
}_{i}^{\prime (s)}-\mathbf{g}_{j}^{\prime (s)}\right\Vert $ and then $%
\left\Vert \mathbf{g}_{\mathtt{III}}^{\prime (s+1)}\right\Vert ^{2}\leq
\left\Vert \mathbf{g}_{i}^{\prime (s)}\right\Vert ^{2}+\left\Vert \mathbf{g}%
_{j}^{\prime (s)}\right\Vert ^{2}\left( 1-2\cos \left( \left\vert \left( 
\mathbf{g}_{i}^{\prime (s)},\mathbf{g}_{j}^{\prime (s)}\right) \right\vert
\right) \frac{\left\Vert \mathbf{g}_{i}^{\prime (s)}\right\Vert }{\left\Vert 
\mathbf{g}_{j}^{\prime (s)}\right\Vert }\right) $.

Without loss of generality, we may suppose $\left\Vert \mathbf{g}%
_{i}^{\prime (s)}\right\Vert \geq \left\Vert \mathbf{g}_{j}^{\prime
(s)}\right\Vert $. Then, by the cos formula above, if $\left\vert
\measuredangle \left( \mathbf{g}_{i}^{\prime (s)},\mathbf{g}_{j}^{\prime
(s)}\right) \right\vert \leq \frac{\pi }{3}$ would hold, we would have

$\left\Vert \mathbf{g}_{\mathtt{III}}^{\prime (s+1)}\right\Vert ^{2}\leq
\left\Vert \mathbf{g}_{i}^{\prime (s)}\right\Vert ^{2}\leq \left\Vert 
\mathbf{g}_{\mathtt{III}}^{\prime (s)}\right\Vert ^{2}$and $s$ wouldn't be
in $T$.

Then $\left\vert \measuredangle \left( \mathbf{g}_{i}^{\prime (s)},\mathbf{g}%
_{j}^{\prime (s)}\right) \right\vert >\frac{\pi }{3}$ \thinspace holds.

Moreover $\left\Vert \mathbf{g}_{\mathtt{III}}^{\prime (s)}\right\Vert
<\left\Vert \mathbf{g}_{\mathtt{III}}^{\prime (s+1)}\right\Vert \leq
\left\Vert \mathbf{g}_{\mathtt{I}}^{\prime (s)}-\mathbf{g}_{\mathtt{II}%
}^{\prime (s)}\right\Vert \leq 2\left\Vert \mathbf{g}_{\mathtt{II}}^{\prime
(s)}\right\Vert $.

Hence $\left\Vert \mathbf{g}_{\mathtt{II}}^{\prime (s)}\right\Vert >\frac{1}{%
2}\left\Vert \mathbf{g}_{\mathtt{III}}^{\prime (s)}\right\Vert $.
\end{proof}

\subsection{The notion of "Needling"}

As A.J. Brentjes has already noted in \cite{Brentj}, if we want our vectors
to have good approximation qualities, their projections $\mathbf{g}%
_{0}^{\prime }$, $\mathbf{g}_{1}^{\prime }$, $\mathbf{g}_{2}^{\prime }$ on $%
\mathbb{P}$ must avoid the \emph{needling}$,$ i.e. flattening phenomenon.

We're going to define and study this phenomenon, but first we need the
following Lemma in elementary geometry. It is very easy and its proof will
be omitted here.

\begin{lemma}
Let $\mathbf{a}$ and $\mathbf{b}$ be two vectors of the plane, with $%
0<\left\Vert \mathbf{b}\right\Vert \leq \left\Vert \mathbf{a}\right\Vert $.
Let's suppose that there exist real numbers $\varepsilon >0$ and $M>0$ such
that:\newline
$\varepsilon \leq \left\vert \measuredangle \left( \mathbf{a},\mathbf{b}%
\right) \right\vert \leq \pi -\varepsilon $ \ and $\frac{\left\Vert \mathbf{b%
}\right\Vert }{\left\Vert \mathbf{a}\right\Vert }\geq M$. Let $\rho $ be the
radius of the greatest disk centered at $\mathbf{0}$ and included in the
parallelogram $\left( \mathbf{a,b,}\left( -\mathbf{a}\right) ,\left( -%
\mathbf{b}\right) \right)$. Then there exists a real number $M^{\prime }>0$%
, depending only on $\varepsilon $ and $M$, such that $\dfrac{\rho }{%
\left\Vert \mathbf{a}\right\Vert }\geq M^{\prime }$.
\end{lemma}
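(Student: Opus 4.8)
The plan is to obtain an explicit formula for $\rho$ and then estimate it from below using the two hypotheses. First I would observe that the quadrilateral with vertices $\mathbf{a},\mathbf{b},-\mathbf{a},-\mathbf{b}$ (taken in this cyclic order) is genuinely a parallelogram centred at $\mathbf{0}$, since $\mathbf{a},-\mathbf{a}$ and $\mathbf{b},-\mathbf{b}$ are its two pairs of opposite vertices; its four sides lie on the lines through $\mathbf{a},\mathbf{b}$; through $\mathbf{b},-\mathbf{a}$; through $-\mathbf{a},-\mathbf{b}$; and through $-\mathbf{b},\mathbf{a}$, and by central symmetry these come in two pairs of parallel lines equidistant from $\mathbf{0}$.

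Since the parallelogram is convex, contains $\mathbf{0}$ in its interior, and is the intersection of the four half-planes bounded by its side-lines, the disk of radius $r$ centred at $\mathbf{0}$ is contained in it if and only if $r$ does not exceed the distance from $\mathbf{0}$ to any of those lines; hence $\rho$ equals the least of these four distances. Writing the area of the triangle $(\mathbf{0},\mathbf{a},\mathbf{b})$ in two ways, as $\tfrac12\|\mathbf{a}\wedge\mathbf{b}\|$ and as $\tfrac12\|\mathbf{b}-\mathbf{a}\|$ times the distance from $\mathbf{0}$ to the line through $\mathbf{a},\mathbf{b}$, and likewise for the line through $\mathbf{b},-\mathbf{a}$, I would obtain
\[
\rho=\frac{\|\mathbf{a}\wedge\mathbf{b}\|}{\max\bigl(\|\mathbf{a}-\mathbf{b}\|,\ \|\mathbf{a}+\mathbf{b}\|\bigr)}.
\]

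Then I would bound the two parts separately. For the denominator, $\|\mathbf{a}\pm\mathbf{b}\|^{2}=\|\mathbf{a}\|^{2}+\|\mathbf{b}\|^{2}\pm 2\langle\mathbf{a},\mathbf{b}\rangle\le(\|\mathbf{a}\|+\|\mathbf{b}\|)^{2}\le 4\|\mathbf{a}\|^{2}$, using $\|\mathbf{b}\|\le\|\mathbf{a}\|$, so $\max(\|\mathbf{a}-\mathbf{b}\|,\|\mathbf{a}+\mathbf{b}\|)\le 2\|\mathbf{a}\|$. For the numerator, $\|\mathbf{a}\wedge\mathbf{b}\|=\|\mathbf{a}\|\,\|\mathbf{b}\|\,\bigl|\sin\measuredangle(\mathbf{a},\mathbf{b})\bigr|\ge\|\mathbf{a}\|\,\|\mathbf{b}\|\,\sin\varepsilon$, because $|\measuredangle(\mathbf{a},\mathbf{b})|\in[\varepsilon,\pi-\varepsilon]$ forces $|\sin\measuredangle(\mathbf{a},\mathbf{b})|\ge\sin\varepsilon$. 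Combining these and then using $\|\mathbf{b}\|\ge M\|\mathbf{a}\|$ yields $\rho\ge\dfrac{\|\mathbf{b}\|\sin\varepsilon}{2}\ge\dfrac{M\sin\varepsilon}{2}\,\|\mathbf{a}\|$, so the lemma holds with $M'=\tfrac12 M\sin\varepsilon$, which depends only on $\varepsilon$ and $M$.

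I do not expect a genuine obstacle here: the only point deserving a careful word is the identification of $\rho$ with the minimum of the distances from $\mathbf{0}$ to the four side-lines (rather than, say, to a vertex), which is immediate from the description of the parallelogram as an intersection of half-planes; everything else is the cosine rule together with the elementary fact that $\sin$ attains its minimum over $[\varepsilon,\pi-\varepsilon]$ at the endpoints.
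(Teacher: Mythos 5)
The paper explicitly omits the proof of this lemma, saying only that it is ``very easy,'' so there is no paper argument to compare against. Your proof is correct and complete: the identification of $\rho$ with the minimum of the distances from $\mathbf{0}$ to the two side-lines is justified by writing the centrally symmetric parallelogram as an intersection of two strips $\{|\ell_i|\le d_i\}$; the formula $\rho=\|\mathbf{a}\wedge\mathbf{b}\|/\max\bigl(\|\mathbf{a}-\mathbf{b}\|,\|\mathbf{a}+\mathbf{b}\|\bigr)$ is right; the triangle-inequality bound $\max\bigl(\|\mathbf{a}-\mathbf{b}\|,\|\mathbf{a}+\mathbf{b}\|\bigr)\le 2\|\mathbf{a}\|$ uses $\|\mathbf{b}\|\le\|\mathbf{a}\|$ correctly; and since the paper's angle measure lies in $(-\pi,\pi]$, one indeed has $|\sin\measuredangle(\mathbf{a},\mathbf{b})|=\sin|\measuredangle(\mathbf{a},\mathbf{b})|\ge\sin\varepsilon$ on $[\varepsilon,\pi-\varepsilon]$. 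You moreover obtain the explicit and sharp-looking constant $M'=\tfrac12 M\sin\varepsilon$, which is more information than the paper records.
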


As an immediate corollary, we have:

\begin{lemma}
\emph{(Needling Sequence of Parallelograms)}\textbf{\ }\newline
Let $\left( \mathbf{a}^{\left( s\right) }\mathbf{,}\text{ }\mathbf{b}%
^{\left( s\right) }\mathbf{,}\text{ }\left( -\mathbf{a}^{\left( s\right)
}\right) ,\text{ }\left( -\mathbf{b}^{\left( s\right) }\right) \right) $ be
a sequence of parallelograms, with:\newline
$0<\left\Vert \mathbf{b}^{\left( s\right) }\right\Vert \leq \left\Vert 
\mathbf{a}^{\left( s\right) }\right\Vert $. If these parallelograms are
needling, id est if \newline
 $\underset{s\rightarrow +\infty }{\lim }\dfrac{\rho
^{(s)}}{\left\Vert \mathbf{a}^{\left( s\right) }\right\Vert }=0$, where $%
\rho $ is defined like in the preceding Lemma, then $\underset{s\rightarrow
+\infty }{\lim }\sin \left( \mathbf{a}^{\left( s\right) }\mathbf{,b}^{\left(
s\right) }\right) \times \frac{\left\Vert \mathbf{b}^{\left( s\right)
}\right\Vert }{\left\Vert \mathbf{a}^{\left( s\right) }\right\Vert }=0$.
\end{lemma}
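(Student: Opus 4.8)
I would prove this by contraposition, reducing it directly to Lemma 13. Suppose the conclusion fails: the real sequence $u_s:=\sin\!\bigl(\mathbf{a}^{(s)},\mathbf{b}^{(s)}\bigr)\cdot\frac{\|\mathbf{b}^{(s)}\|}{\|\mathbf{a}^{(s)}\|}$ does not tend to $0$. Since $u_s\to 0$ is equivalent to $|u_s|\to 0$, there is then a real number $\delta>0$ and an infinite set $U$ of indices such that $|u_s|\ge\delta$ for all $s\in U$. Now recall the two a priori bounds $\bigl|\sin\!\bigl(\mathbf{a}^{(s)},\mathbf{b}^{(s)}\bigr)\bigr|\le 1$ and $\frac{\|\mathbf{b}^{(s)}\|}{\|\mathbf{a}^{(s)}\|}\le 1$, the second of which is precisely the standing hypothesis $0<\|\mathbf{b}^{(s)}\|\le\|\mathbf{a}^{(s)}\|$. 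Dividing the inequality $|u_s|\ge\delta$ by each of these in turn shows that, for every $s\in U$, both factors are separately bounded below: $\frac{\|\mathbf{b}^{(s)}\|}{\|\mathbf{a}^{(s)}\|}\ge\delta$ and $\bigl|\sin\!\bigl(\mathbf{a}^{(s)},\mathbf{b}^{(s)}\bigr)\bigr|\ge\delta$.

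Next I would convert the sine bound into a confinement of the angle. Writing $\theta^{(s)}:=\bigl|\measuredangle\!\bigl(\mathbf{a}^{(s)},\mathbf{b}^{(s)}\bigr)\bigr|\in[0,\pi]$, we have $\bigl|\sin\!\bigl(\mathbf{a}^{(s)},\mathbf{b}^{(s)}\bigr)\bigr|=\sin\theta^{(s)}$, and $\sin\theta^{(s)}\ge\delta$ is equivalent to $\arcsin\delta\le\theta^{(s)}\le\pi-\arcsin\delta$. Hence, putting $\varepsilon:=\arcsin\delta>0$ and $M:=\delta>0$, for each $s\in U$ the pair $\bigl(\mathbf{a}^{(s)},\mathbf{b}^{(s)}\bigr)$ satisfies all the hypotheses of Lemma 13: $0<\|\mathbf{b}^{(s)}\|\le\|\mathbf{a}^{(s)}\|$, the angular condition $\varepsilon\le\bigl|\measuredangle\!\bigl(\mathbf{a}^{(s)},\mathbf{b}^{(s)}\bigr)\bigr|\le\pi-\varepsilon$, and $\frac{\|\mathbf{b}^{(s)}\|}{\|\mathbf{a}^{(s)}\|}\ge M$.

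Applying Lemma 13 then produces a constant $M'>0$ depending only on $\varepsilon$ and $M$, hence only on $\delta$ and in particular independent of $s$, such that $\frac{\rho^{(s)}}{\|\mathbf{a}^{(s)}\|}\ge M'$ for every $s\in U$. Since $U$ is infinite, this contradicts the needling hypothesis $\lim_{s\to+\infty}\frac{\rho^{(s)}}{\|\mathbf{a}^{(s)}\|}=0$, and the Lemma follows. There is no genuine difficulty in this argument: the only point deserving a moment's attention is the elementary observation that a product of two quantities, each bounded above by $1$, can be bounded away from $0$ only if each factor is itself bounded away from $0$ — this is exactly what makes Lemma 13 applicable, and everything else is a direct quotation of Lemma 13 together with the standing assumptions on the parallelograms.
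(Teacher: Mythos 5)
Your proof is correct and follows the same contraposition strategy as the paper: negate the conclusion, note that both factors are bounded above by $1$ so each must be bounded away from $0$ along an infinite subsequence, and invoke Lemma 13 to contradict the needling hypothesis. The only difference is cosmetic — you spell out the passage from the $\sin$ lower bound to the two-sided angular constraint $\varepsilon\le\left\vert\measuredangle(\mathbf{a}^{(s)},\mathbf{b}^{(s)})\right\vert\le\pi-\varepsilon$, which the paper leaves implicit.
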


\begin{proof}
If the conclusion were false, then, for some  $\eta >0$ and for
any $s$ in some infinite set $U$, we should have: $\sin \left( \mathbf{a}%
^{\left( s\right) }\mathbf{,b}^{\left( s\right) }\right) \geq \eta $ and $%
\frac{\left\Vert \mathbf{b}^{\left( s\right) }\right\Vert }{\left\Vert 
\mathbf{a}^{\left( s\right) }\right\Vert }\geq \eta $, and then, by the
preceding Lemma, $\frac{\rho ^{(s)}}{\left\Vert \mathbf{a}^{\left( s\right)
}\right\Vert }\geq M^{\prime }$ for some $M^{\prime }$ and for $s\in U.$ But
this negates the hypothesis of our Lemma. Then the conclusion is true.
\end{proof}

This last Lemma leads to the more important Lemma, which describes the
needling phenomenon on the set $T$.

\begin{lemma}
\emph{(Needling Triangles)} The three following assumptions are
logically equivalent: \ \textbf{a )}$\underset{s\rightarrow +\infty ,s\in T}{%
\lim }\dfrac{A^{(s)}}{\left\Vert \mathbf{g}_{\mathtt{III}}^{\prime
(s)}\right\Vert ^{2}}=0$;\newline
\textbf{b )}$\underset{s\rightarrow +\infty ,s\in T}{\lim }\dfrac{\rho ^{(s)}%
}{\left\Vert \mathbf{g}_{\mathtt{III}}^{\prime (s)}\right\Vert }=0$, where $%
\rho ^{(s)}$ is the radius of the greatest disk centered at $\mathbf{0}$ and
included in the convex hull:\newline
 \ $\mathbf{H}^{\prime (s)}$\textbf{\ }$=\textsc{conv}\left( \mathbf{g}_{0}^{\prime },\mathbf{g}_{1}^{\prime },\mathbf{g}%
_{2}^{\prime },-\mathbf{g}_{0}^{\prime },-\mathbf{g}_{1}^{\prime },-\mathbf{g%
}_{2}^{\prime }\right) $ in $\mathbb{P}$;\newline
\textbf{c )}$\underset{s\rightarrow +\infty ,s\in T}{\lim }\left( \frac{%
\left\Vert \mathbf{g}_{\mathtt{I}}^{\prime (s)}\right\Vert }{\left\Vert 
\mathbf{g}_{\mathtt{II}}^{\prime (s)}\right\Vert }+\left( \pi -\left\vert
\measuredangle \left( \mathbf{g}_{\mathtt{II}}^{\prime (s)},\mathbf{g}_{%
\mathtt{III}}^{\prime (s)}\right) \right\vert \right) \right) =0$.
\end{lemma}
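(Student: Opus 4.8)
The plan is to prove the cycle of implications $(\mathbf{a})\Rightarrow(\mathbf{b})\Rightarrow(\mathbf{c})\Rightarrow(\mathbf{a})$. Fix a step $s$ and abbreviate $\mathbf{g}_{i}=\mathbf{g}_{i}^{\prime(s)}$, $\mathbf{H}'=\mathbf{H}^{\prime(s)}$ and $R=\|\mathbf{g}_{\mathtt{III}}^{\prime(s)}\|=\max_{i}\|\mathbf{g}_{i}\|$. As in the proof of the Prism Lemma, projecting the relevant identity on $\mathbb{P}$ gives $z_{0}\mathbf{g}_{0}+z_{1}\mathbf{g}_{1}+z_{2}\mathbf{g}_{2}=\mathbf{0}$ with all $z_{i}>0$, so $\mathbf{0}$ lies in the interior of the triangle $\textsc{conv}(\mathbf{g}_{0},\mathbf{g}_{1},\mathbf{g}_{2})$; splitting that triangle at $\mathbf{0}$ and comparing with the Remark above, a), one gets $A^{(s)}=\|\mathbf{g}_{0}\wedge\mathbf{g}_{1}\|+\|\mathbf{g}_{1}\wedge\mathbf{g}_{2}\|+\|\mathbf{g}_{2}\wedge\mathbf{g}_{0}\|$, and in particular $\|\mathbf{g}_{i}\wedge\mathbf{g}_{j}\|\le A^{(s)}$ for every pair. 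For $(\mathbf{a})\Rightarrow(\mathbf{b})$ I would argue as follows: splitting the centrally symmetric polygon $\mathbf{H}'=\textsc{conv}(\pm\mathbf{g}_{0},\pm\mathbf{g}_{1},\pm\mathbf{g}_{2})$ at $\mathbf{0}$ writes $\mathrm{area}(\mathbf{H}')$ as a sum of terms $\|\mathbf{g}_{i}\wedge\mathbf{g}_{j}\|$, so $\mathrm{area}(\mathbf{H}')\le 2A^{(s)}$ (in fact $\mathrm{area}(\mathbf{H}')=A^{(s)}$ when $\mathbf{H}'$ is a genuine hexagon); on the other hand $\mathbf{H}'$ contains both the disk of radius $\rho^{(s)}$ centred at $\mathbf{0}$ and the points $\pm\mathbf{g}_{\mathtt{III}}^{\prime(s)}$, hence a triangle of base $2R$ and height $\rho^{(s)}$, whence $R\rho^{(s)}\le\mathrm{area}(\mathbf{H}')\le 2A^{(s)}$ and $\rho^{(s)}/R\le 2A^{(s)}/R^{2}\to 0$ under $(\mathbf{a})$.

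For $(\mathbf{b})\Rightarrow(\mathbf{c})$ the key tool is the Lemma "Needling Sequence of Parallelograms". The parallelogram $\textsc{conv}(\pm\mathbf{g}_{\mathtt{II}}^{\prime(s)},\pm\mathbf{g}_{\mathtt{III}}^{\prime(s)})$ lies in $\mathbf{H}'$, so its inscribed radius is $\le\rho^{(s)}$; under $(\mathbf{b})$ these parallelograms are therefore needling (with $\mathbf{a}^{(s)}=\mathbf{g}_{\mathtt{III}}^{\prime(s)}$, $\mathbf{b}^{(s)}=\mathbf{g}_{\mathtt{II}}^{\prime(s)}$), and that Lemma gives $\sin\bigl(\mathbf{g}_{\mathtt{III}}^{\prime(s)},\mathbf{g}_{\mathtt{II}}^{\prime(s)}\bigr)\,\|\mathbf{g}_{\mathtt{II}}^{\prime(s)}\|/\|\mathbf{g}_{\mathtt{III}}^{\prime(s)}\|\to 0$. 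Since on $T$ we have $\|\mathbf{g}_{\mathtt{II}}^{\prime(s)}\|/\|\mathbf{g}_{\mathtt{III}}^{\prime(s)}\|>\tfrac12$ and all pairwise angles exceed $\pi/3$ in absolute value (Lemma "Geometry on $T$"), this forces $\sin|\measuredangle(\mathbf{g}_{\mathtt{II}}^{\prime(s)},\mathbf{g}_{\mathtt{III}}^{\prime(s)})|\to 0$ and hence $\pi-|\measuredangle(\mathbf{g}_{\mathtt{II}}^{\prime(s)},\mathbf{g}_{\mathtt{III}}^{\prime(s)})|\to 0$. There remains $\|\mathbf{g}_{\mathtt{I}}^{\prime(s)}\|/\|\mathbf{g}_{\mathtt{II}}^{\prime(s)}\|\to 0$: if this failed, then on some infinite subset of $T$ one would have $\|\mathbf{g}_{\mathtt{I}}^{\prime(s)}\|\ge\delta\|\mathbf{g}_{\mathtt{II}}^{\prime(s)}\|>\tfrac{\delta}{2}R$ for a fixed $\delta>0$; there $\mathbf{g}_{\mathtt{II}}^{\prime(s)},\mathbf{g}_{\mathtt{III}}^{\prime(s)}$ are nearly antipodal while $\mathbf{g}_{\mathtt{I}}^{\prime(s)}$ makes an angle $>\pi/3$ with each of them, so a short discussion of the reduced angles shows $\sin|\measuredangle(\mathbf{g}_{\mathtt{I}}^{\prime(s)},\mathbf{g}_{\mathtt{III}}^{\prime(s)})|>\tfrac12$ eventually, and then $\textsc{conv}(\pm\mathbf{g}_{\mathtt{I}}^{\prime(s)},\pm\mathbf{g}_{\mathtt{III}}^{\prime(s)})\subseteq\mathbf{H}'$ has inscribed radius $\|\mathbf{g}_{\mathtt{I}}^{\prime(s)}\wedge\mathbf{g}_{\mathtt{III}}^{\prime(s)}\|/\max\bigl(\|\mathbf{g}_{\mathtt{I}}^{\prime(s)}-\mathbf{g}_{\mathtt{III}}^{\prime(s)}\|,\|\mathbf{g}_{\mathtt{I}}^{\prime(s)}+\mathbf{g}_{\mathtt{III}}^{\prime(s)}\|\bigr)\ge(\delta R/2)(R)(1/2)/(2R)=\delta R/8$, so $\rho^{(s)}\ge\delta R/8$, contradicting $(\mathbf{b})$.

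Finally, $(\mathbf{c})\Rightarrow(\mathbf{a})$ is a direct estimate on $A^{(s)}=\|\mathbf{g}_{\mathtt{I}}^{\prime(s)}\wedge\mathbf{g}_{\mathtt{II}}^{\prime(s)}+\mathbf{g}_{\mathtt{II}}^{\prime(s)}\wedge\mathbf{g}_{\mathtt{III}}^{\prime(s)}+\mathbf{g}_{\mathtt{III}}^{\prime(s)}\wedge\mathbf{g}_{\mathtt{I}}^{\prime(s)}\|$ (Remark above, a)): the middle term is $\le R^{2}\sin|\measuredangle(\mathbf{g}_{\mathtt{II}}^{\prime(s)},\mathbf{g}_{\mathtt{III}}^{\prime(s)})|\le R^{2}\bigl(\pi-|\measuredangle(\mathbf{g}_{\mathtt{II}}^{\prime(s)},\mathbf{g}_{\mathtt{III}}^{\prime(s)})|\bigr)$, and each of the other two is $\le\|\mathbf{g}_{\mathtt{I}}^{\prime(s)}\|R\le R^{2}\|\mathbf{g}_{\mathtt{I}}^{\prime(s)}\|/\|\mathbf{g}_{\mathtt{II}}^{\prime(s)}\|$ (using $\|\mathbf{g}_{\mathtt{II}}^{\prime(s)}\|\le R$), so $A^{(s)}/R^{2}\le\bigl(\pi-|\measuredangle(\mathbf{g}_{\mathtt{II}}^{\prime(s)},\mathbf{g}_{\mathtt{III}}^{\prime(s)})|\bigr)+2\|\mathbf{g}_{\mathtt{I}}^{\prime(s)}\|/\|\mathbf{g}_{\mathtt{II}}^{\prime(s)}\|\to 0$ under $(\mathbf{c})$; this closes the cycle and yields the equivalence. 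The one genuinely delicate step is the second half of $(\mathbf{b})\Rightarrow(\mathbf{c})$ — ruling out that $\|\mathbf{g}_{\mathtt{I}}^{\prime(s)}\|/\|\mathbf{g}_{\mathtt{II}}^{\prime(s)}\|$ stays bounded away from $0$ — where one must combine the $\pi/3$ lower bound on all three pairwise angles on $T$ with the near‑antipodality of $\mathbf{g}_{\mathtt{II}}^{\prime(s)},\mathbf{g}_{\mathtt{III}}^{\prime(s)}$ to trap the direction of $\mathbf{g}_{\mathtt{I}}^{\prime(s)}$ and produce a parallelogram inside $\mathbf{H}'$ incompatible with $(\mathbf{b})$; the angle bookkeeping there, and the harmless possibility that $\mathbf{H}'$ degenerates to a parallelogram rather than a hexagon, are the only fiddly points, everything else being a routine computation.
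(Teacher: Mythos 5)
Your proof is correct, and on the first implication it takes a genuinely different and arguably cleaner route than the paper. For (a)$\Rightarrow$(b) the paper introduces an auxiliary set $\mathbf{K}'$ (the convex hull of the points $2\mathbf{g}_i'-\mathbf{g}_j'$), verifies that $\mathbf{H}'\subset\mathbf{K}'$, decomposes $\mathbf{K}'$ into $13$ triangles isometric to $\mathbf{g}_0'\mathbf{g}_1'\mathbf{g}_2'$, and concludes $\pi(\rho^{(s)})^2\le 13A^{(s)}$. You instead bound $\mathrm{area}(\mathbf{H}')$ from both sides: below by the inscribed triangle of base $2\|\mathbf{g}_{\mathtt{III}}'\|$ and height $\rho^{(s)}$, above by $2A^{(s)}$ via the barycentric decomposition of the hexagon from $\mathbf{0}$. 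This gives $\rho^{(s)}\le 2A^{(s)}/\|\mathbf{g}_{\mathtt{III}}'\|$ directly, avoids the $\mathbf{K}'$ construction, and explicitly covers the degenerate case where $\mathbf{H}'$ is a parallelogram — a point the paper leaves tacit. For (b)$\Rightarrow$(c) your argument rests on the same two ingredients as the paper's (the Needling-Parallelogram Lemma and the Geometry-on-$T$ bounds $\|\mathbf{g}_{\mathtt{II}}'\|>\tfrac12\|\mathbf{g}_{\mathtt{III}}'\|$ and $|\measuredangle|>\pi/3$); the first half is identical, and in the second half the paper deduces $\liminf|\sin(\mathbf{g}_{\mathtt{I}}',\mathbf{g}_{\mathtt{II}}')|>0$ directly from the angular bounds before invoking the needling lemma once more, whereas you run a contradiction through the parallelogram $\textsc{conv}(\pm\mathbf{g}_{\mathtt{I}}',\pm\mathbf{g}_{\mathtt{III}}')$ and its explicit inscribed-radius formula; both routes are sound and of comparable length. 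For (c)$\Rightarrow$(a) the paper declares it obvious; you supply the short estimate $A^{(s)}/\|\mathbf{g}_{\mathtt{III}}'\|^2\le(\pi-|\measuredangle(\mathbf{g}_{\mathtt{II}}',\mathbf{g}_{\mathtt{III}}')|)+2\|\mathbf{g}_{\mathtt{I}}'\|/\|\mathbf{g}_{\mathtt{II}}'\|$, which is a useful explicit form of what the paper omits.
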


\begin{proof}

First, let's prove a)$\Rightarrow $b). Omitting the indices $^{\left(
s\right) },$ we have:

$\mathbf{H}^{\prime }$\textbf{\ }$=\textsc{conv}\left( \mathbf{g}_{0}^{\prime
},\mathbf{g}_{1}^{\prime },\mathbf{g}_{2}^{\prime },-\mathbf{g}_{0}^{\prime
},-\mathbf{g}_{1}^{\prime }-,\mathbf{g}_{2}^{\prime }\right) $ in $\mathbb{P}
$.

Let now $\mathbf{K}^{\prime }$ be\textbf{\ } the convex hull of all the
points $2\mathbf{g}_{i}^{\prime }-\mathbf{g}_{j}^{\prime }$, with $i\neq j$
and $\left\{ i,j\right\} \subset \left\{ 0,1,2\right\} .$ Each $\mathbf{g}%
_{i}^{\prime }$ belongs to $\mathbf{K}^{\prime }$, because

$\mathbf{g}%
_{i}^{\prime }=\frac{2}{3}\left( 2\mathbf{g}_{i}^{\prime }-\mathbf{g}%
_{j}^{\prime }\right) +\frac{1}{3}\left( 2\mathbf{g}_{j}^{\prime }-\mathbf{g}%
_{i}^{\prime }\right) $.

In addition, the projection of the cofactors relation on $\mathbb{P}$ leads
to:

$\left\Vert \mathbf{X}\right\Vert \left\Vert \mathbf{b}_{0}^{\prime \prime
}{}\right\Vert \cdot \mathbf{g}_{0}^{\prime }+\left\Vert \mathbf{X}%
\right\Vert \left\Vert \mathbf{b}_{1}^{\prime \prime }{}\right\Vert \cdot 
\mathbf{g}_{1}^{\prime }+\left\Vert \mathbf{X}\right\Vert \left\Vert \mathbf{%
b}_{2}^{\prime \prime }\right\Vert \cdot \mathbf{g}_{2}^{\prime }=\mathbf{0.}
$

Then, $\mathbf{0}$ is in the triangle $\mathbf{g}_{0}^{\prime }\mathbf{g}%
_{1}^{\prime }\mathbf{g}_{2}^{\prime }$.\ Let $F_{0}$ be the homothety with
center $\mathbf{g}_{0}^{\prime }$ and with scaling $2$.

Then $F_{0}\left( \mathbf{0}\right) =-\mathbf{g}_{0}^{\prime }$ is inside $%
F\left( \mathbf{g}_{0}^{\prime }\mathbf{g}_{1}^{\prime }\mathbf{g}%
_{2}^{\prime }\right) $, which is the triangle with summits $\mathbf{g}%
_{2}^{\prime }$; $2\mathbf{g}_{1}^{\prime }-\mathbf{g}_{0}^{\prime }$; $2%
\mathbf{g}_{2}^{\prime }-\mathbf{g}_{0}^{\prime }.$ Then $\left( -\mathbf{g}%
_{0}^{\prime }\right) $, and, in the same way, $\left( -\mathbf{g}%
_{1}^{\prime }\right) $ and $\left( -\mathbf{g}_{2}^{\prime }\right) $,
belong to $\mathbf{K}^{\prime }$. Then $\mathbf{H}^{\prime }\subset \mathbf{K%
}^{\prime }$; then $\pi \left( \rho ^{\left( s\right) }\right) ^{2}\leq 
\text{area}\left( \mathbf{K}^{\prime \left( s\right) }\right) $. But $%
\mathbf{K}^{\prime \left( s\right) }$ is formed with $13$ triangles, each of
them isometric to the triangle $\mathbf{g}_{0}^{\prime }\mathbf{g}%
_{1}^{\prime }\mathbf{g}_{2}^{\prime }$.  Then: $\pi \left( \rho ^{\left( s\right) }\right) ^{2}\leq \text{area}\left( 
\mathbf{K}^{\prime \left( s\right) }\right) \leq 13A^{(s)}$.

Then, using a), we have $\pi \left( \rho ^{\left( s\right) }\right) ^{2}\leq
\left\Vert \mathbf{g}_{\mathtt{III}}^{\prime (s)}\right\Vert ^{2}\varepsilon
\left( s\right) $, with $\underset{x\rightarrow +\infty ,x\in T}{\lim }%
\varepsilon \left( s\right) =0$. This implies b). Second, let's prove b)$\Rightarrow $c).

If b) is true, we also have $\underset{s\rightarrow +\infty ,s\in T}{\lim }%
\frac{\rho ^{(s)}}{\left\Vert \mathbf{g}_{\mathtt{III}}^{\prime
(s)}\right\Vert }=0$, if $\rho ^{(s)}$ is the radius of the greatest disk
centered at $\mathbf{0}$ and included in the \textbf{parallelogram }with
summits $\mathbf{g}_{\text{II}}^{\prime }$, $\mathbf{g}_{\text{III}}^{\prime
},$ $-\mathbf{g}_{\text{II}}^{\prime }$, $-\mathbf{g}_{\text{III}}^{\prime }$%
. Then, by the last Lemma: \newline
$\underset{s\rightarrow +\infty ,s\in T}{\lim }%
\sin \left( \mathbf{g}_{\text{II}}^{\prime (s)},\mathbf{g}_{\mathtt{III}%
}^{\prime (s)}\right) \times \frac{\left\Vert \mathbf{g}_{\text{II}}^{\prime
(s)}\right\Vert }{\left\Vert \mathbf{g}_{\text{III}}^{\prime (s)}\right\Vert 
}=0$. But, by the Lemma "Geometry on $T$" of the last subsection above, $%
\frac{\left\Vert \mathbf{g}_{\mathtt{II}}^{\prime (s)}\right\Vert }{%
\left\Vert \mathbf{g}_{\mathtt{III}}^{\prime (s)}\right\Vert }\geq \frac{1}{2%
}$ holds. \newline
 Then $\underset{s\rightarrow +\infty ,s\in T}{\lim }\sin \left( 
\mathbf{g}_{\text{II}}^{\prime (s)},\mathbf{g}_{\mathtt{III}}^{\prime
(s)}\right) =0$. By the same Lemma: \newline
 $\left\vert \measuredangle \left( 
\mathbf{g}_{\text{II}}^{\prime (s)},\mathbf{g}_{\mathtt{III}}^{\prime
(s)}\right) \right\vert >\frac{\pi }{3}.$ Then:$\underset{s\rightarrow
+\infty ,s\in T}{\lim }\left( \pi -\left\vert \measuredangle \left( \mathbf{g%
}_{\text{II}}^{\prime (s)},\mathbf{g}_{\mathtt{III}}^{\prime (s)}\right)
\right\vert \right) =0$.

This last result, with the help of $\left\vert \measuredangle \left( \mathbf{%
g}_{\text{I}}^{\prime (s)},\mathbf{g}_{\mathtt{III}}^{\prime (s)}\right)
\right\vert >\frac{\pi }{3},$ by the same Lemma, leads to: $\underset{%
s\rightarrow +\infty ,s\in T}{\lim \sup }\left\vert \measuredangle \left( 
\mathbf{g}_{\text{I}}^{\prime (s)},\mathbf{g}_{\mathtt{II}}^{\prime
(s)}\right) \right\vert <\frac{2\pi }{3}$. We also have by the same Lemma, $%
\left\vert \measuredangle \left( \mathbf{g}_{\text{I}}^{\prime (s)},\mathbf{g%
}_{\mathtt{II}}^{\prime (s)}\right) \right\vert >\frac{\pi }{3}$. Then: $%
\underset{s\rightarrow +\infty ,s\in T}{\lim \inf }\left\vert \sin \left( 
\mathbf{g}_{\text{I}}^{\prime (s)},\mathbf{g}_{\mathtt{II}}^{\prime
(s)}\right) \right\vert >0$.

But, if b) is true, $\underset{s\rightarrow +\infty ,s\in T}{\lim }\frac{%
\rho ^{(s)}}{\left\Vert \mathbf{g}_{\mathtt{III}}^{\prime (s)}\right\Vert }%
=0 $ holds, if $\rho ^{(s)}$ is the radius of the greatest disk centered at $%
\mathbf{0}$ and included in the other parallelogram, with summits $\mathbf{g}%
_{\text{I}}^{\prime }$, $\mathbf{g}_{\text{III}}^{\prime },$ $-\mathbf{g}_{%
\text{I}}^{\prime }$, $-\mathbf{g}_{\text{III}}^{\prime }$, and then, by $%
\frac{\left\Vert \mathbf{g}_{\mathtt{II}}^{\prime (s)}\right\Vert }{%
\left\Vert \mathbf{g}_{\mathtt{III}}^{\prime (s)}\right\Vert }\geq \frac{1}{2%
}$, we also have: $\underset{s\rightarrow +\infty ,s\in T}{\lim }\frac{\rho
^{(s)}}{\left\Vert \mathbf{g}_{\mathtt{II}}^{\prime (s)}\right\Vert }=0$.
Then, by the last Lemma on the needling parallelograms: $\underset{s\rightarrow +\infty ,s\in T}{\lim }\sin \left( \mathbf{g}_{\text{%
I}}^{\prime (s)},\mathbf{g}_{\mathtt{II}}^{\prime (s)}\right) \times \frac{%
\left\Vert \mathbf{g}_{\text{I}}^{\prime (s)}\right\Vert }{\left\Vert 
\mathbf{g}_{\text{II}}^{\prime (s)}\right\Vert }=0$, and using the $\lim
\inf $ above, we obtain: $\underset{s\rightarrow +\infty ,s\in T}{\lim }%
\frac{\left\Vert \mathbf{g}_{\mathtt{I}}^{\prime (s)}\right\Vert }{%
\left\Vert \mathbf{g}_{\mathtt{II}}^{\prime (s)}\right\Vert }=0$. The proof
of the part b)$\Rightarrow $c$)$ is done.\newline
Finally, the implication c)$%
\Rightarrow $a$)$ is obvious.
\end{proof}

In order to refute the needling phenomenon, we're going to study what
happens when the projections on $\mathbb{P}$\ are "almost flat". This will
allow us to prove that the needling CANNOT happen.

\subsection{Almost Flat Triangles. Set $T^{\ast }$ of indices.}

\begin{notation}
$T^{\ast }$ will denote the set of all integers $s\in T$
such that the
triangle $\mathbf{g}_{0}^{\prime (s)}\mathbf{g}_{1}^{\prime (s)}\mathbf{g}%
_{2}^{\prime (s)}$is "almost flat", namely such that:
$\frac{\left\Vert \mathbf{g}_{\mathtt{I}}^{\prime (s)}\right\Vert }{%
\left\Vert \mathbf{g}_{\mathtt{II}}^{\prime (s)}\right\Vert }\leq 0.1$ and $%
\left\vert \measuredangle \left( \mathbf{g}_{\mathtt{III}}^{\prime (s)},%
\mathbf{g}_{\mathtt{II}}^{\prime (s)}\right) \right\vert \geq \dfrac{30\pi }{%
31}$.
\end{notation}

\begin{lemma}[Flat Triangle Lemma]
If $s\in T^{\ast }$ (i.e. if the triangle \newline
$\mathbf{g}_{0}^{\prime (s)}%
\mathbf{g}_{1}^{\prime (s)}\mathbf{g}_{2}^{\prime (s)}$ is "almost flat"),
then we also have the following relations:\newline
$\dfrac{15\pi }{31}\leq \left\vert \measuredangle \left( \mathbf{g}_{\mathtt{%
III}}^{\prime (s)},\mathbf{g}_{\mathtt{I}}^{\prime (s)}\right) \right\vert
\leq \dfrac{17\pi }{31}$ and $\dfrac{15\pi }{31}\leq \left\vert
\measuredangle \left( \mathbf{g}_{\mathtt{II}}^{\prime (s)},\mathbf{g}_{%
\mathtt{I}}^{\prime (s)}\right) \right\vert \leq \dfrac{17\pi }{31}$,  \newline
and also:
$\frac{\left\Vert \mathbf{g}_{\mathtt{II}}^{\prime (s)}\right\Vert }{%
\left\Vert \mathbf{g}_{\mathtt{III}}^{\prime (s)}\right\Vert }\geq 0.979$, $%
\left\Vert \mathbf{g}_{\mathtt{II}}^{\prime (s)}+\mathbf{g}_{\mathtt{III}%
}^{\prime (s)}\right\Vert \leq 0.23\left\Vert \mathbf{g}_{\mathtt{III}%
}^{\prime (s)}\right\Vert $ and:\newline
$\left\Vert \mathbf{g}_{\mathtt{I}}^{\prime (s)}-\mathbf{g}_{\mathtt{II}%
}^{\prime (s)}-\mathbf{g}_{\mathtt{III}}^{\prime (s)}\right\Vert \leq
0.33\left\Vert \mathbf{g}_{\mathtt{III}}^{\prime (s)}\right\Vert $.
\end{lemma}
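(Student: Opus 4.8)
The plan is to turn everything into plane trigonometry in $\mathbb{P}$ and to combine, besides the ``almost flat'' hypothesis, two further inputs: the information coming from $s\in T$ about pairwise differences, and the fact that $\mathbf{0}$ lies strictly inside the triangle $\mathbf{g}_{0}^{\prime(s)}\mathbf{g}_{1}^{\prime(s)}\mathbf{g}_{2}^{\prime(s)}$. Fix $s\in T^{\ast}$, drop the superscripts $^{(s)}$, scale so that $\|\mathbf{g}_{\mathtt{III}}^{\prime}\|=1$, and set $r=\|\mathbf{g}_{\mathtt{II}}^{\prime}\|\le 1$, $\rho=\|\mathbf{g}_{\mathtt{I}}^{\prime}\|\le 0.1\,r\le 0.1$, $\theta_{1}=|\measuredangle(\mathbf{g}_{\mathtt{III}}^{\prime},\mathbf{g}_{\mathtt{I}}^{\prime})|$, $\theta_{2}=|\measuredangle(\mathbf{g}_{\mathtt{II}}^{\prime},\mathbf{g}_{\mathtt{I}}^{\prime})|$, $\theta_{3}=|\measuredangle(\mathbf{g}_{\mathtt{III}}^{\prime},\mathbf{g}_{\mathtt{II}}^{\prime})|\ge 30\pi/31$; by the Lemma ``Geometry on $T$'' we already have $r>1/2$ and all pairwise angles $>\pi/3$. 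Two preliminary facts. \textbf{(i)} Since $s\in T$, the construction at the step $s\to s+1$ replaces one vector by a difference whose $\mathbb{P}$-projection has norm $\Delta_{\min}^{(s)}$, the two others keeping their old projections (of norm $\le 1$); as $\|\mathbf{g}_{\mathtt{III}}^{\prime(s+1)}\|>1$ this forces $\Delta_{\min}^{(s)}>1$, and since $\Delta_{\min}^{(s)}$ is the least of the three pairwise differences of the $\mathbf{g}_{i}^{\prime(s)}$ we get $\|\mathbf{g}_{i}^{\prime}-\mathbf{g}_{j}^{\prime}\|>1$ for all $i\ne j$; call this $(\star)$. \textbf{(ii)} The cofactors relation projected on $\mathbb{P}$ (exactly as in the Prism Lemma, with strictly positive coefficients because $\mathbf{X}$ is rationally independent, so every $\mathbf{b}_{i}^{(s)}\bullet\mathbf{X}\ne 0$) expresses $\mathbf{0}$ as a strictly positive combination of $\mathbf{g}_{0}^{\prime},\mathbf{g}_{1}^{\prime},\mathbf{g}_{2}^{\prime}$; hence these directions positively span $\mathbb{P}$, their three angular gaps are all $<\pi$ and sum to $2\pi$, so the three pairwise measures $|\measuredangle|$ are exactly these gaps and $\theta_{1}+\theta_{2}+\theta_{3}=2\pi$ with $0<\theta_{i}<\pi$.

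Now the two pairs of angle inequalities. Expanding $\|\mathbf{g}_{\mathtt{I}}^{\prime}-\mathbf{g}_{\mathtt{III}}^{\prime}\|^{2}=\rho^{2}+1-2\rho\cos\theta_{1}$, the instance of $(\star)$ for $(\mathtt{I},\mathtt{III})$ gives $\rho^{2}-2\rho\cos\theta_{1}>0$, whence $\cos\theta_{1}<\rho/2\le 0.05<\cos(15\pi/31)$ and so $\theta_{1}>15\pi/31$. Likewise, expanding $\|\mathbf{g}_{\mathtt{I}}^{\prime}-\mathbf{g}_{\mathtt{II}}^{\prime}\|^{2}=\rho^{2}+r^{2}-2\rho r\cos\theta_{2}$ and using $r\le 1$, the instance of $(\star)$ for $(\mathtt{I},\mathtt{II})$ gives $\rho^{2}-2\rho r\cos\theta_{2}>1-r^{2}\ge 0$, hence $\cos\theta_{2}<\rho/(2r)\le 0.05<\cos(15\pi/31)$ and $\theta_{2}>15\pi/31$. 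The upper bounds then follow at once from $\theta_{1}+\theta_{2}+\theta_{3}=2\pi$: $\theta_{1}\le 2\pi-\theta_{3}-\theta_{2}\le 2\pi-\tfrac{30\pi}{31}-\tfrac{15\pi}{31}=\tfrac{17\pi}{31}$, and symmetrically $\theta_{2}\le\tfrac{17\pi}{31}$ (and also $\theta_{3}=2\pi-\theta_{1}-\theta_{2}\in[\tfrac{28\pi}{31},\pi)$). This is the first line of the Lemma.

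For the length ratio, rewrite the inequality $\rho^{2}+r^{2}-2\rho r\cos\theta_{2}>1$ of $(\star)$ as $(r-\rho\cos\theta_{2})^{2}>1-\rho^{2}\sin^{2}\theta_{2}$; since $r>\tfrac12>\rho\ge\rho\cos\theta_{2}$ the left factor is positive, so $r>\rho\cos\theta_{2}+\sqrt{1-\rho^{2}\sin^{2}\theta_{2}}$, and using $0\le\rho\le 0.1$ together with $\cos(17\pi/31)\le\cos\theta_{2}\le\cos(15\pi/31)$ one checks that the right-hand side attains its minimum over these ranges at $\rho=0.1$, $\theta_{2}=17\pi/31$, with value $>0.979$; hence $r>0.979$. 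Finally, from $\|\mathbf{g}_{\mathtt{II}}^{\prime}+\mathbf{g}_{\mathtt{III}}^{\prime}\|^{2}=r^{2}+1+2r\cos\theta_{3}$, the bound $\cos\theta_{3}\le\cos(30\pi/31)=-\cos(\pi/31)$, and $r\in[0.979,1]$, we get $\|\mathbf{g}_{\mathtt{II}}^{\prime}+\mathbf{g}_{\mathtt{III}}^{\prime}\|^{2}\le(r-\cos(\pi/31))^{2}+\sin^{2}(\pi/31)<(0.23)^{2}$, so $\|\mathbf{g}_{\mathtt{II}}^{\prime}+\mathbf{g}_{\mathtt{III}}^{\prime}\|\le 0.23$; then, by the triangle inequality and $\|\mathbf{g}_{\mathtt{I}}^{\prime}\|\le 0.1\,r\le 0.1$, $\|\mathbf{g}_{\mathtt{I}}^{\prime}-\mathbf{g}_{\mathtt{II}}^{\prime}-\mathbf{g}_{\mathtt{III}}^{\prime}\|\le\|\mathbf{g}_{\mathtt{I}}^{\prime}\|+\|\mathbf{g}_{\mathtt{II}}^{\prime}+\mathbf{g}_{\mathtt{III}}^{\prime}\|\le 0.33$. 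Undoing the normalization by the factor $\|\mathbf{g}_{\mathtt{III}}^{\prime(s)}\|$ yields all the stated inequalities.

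The delicate points are the fact $(\star)$ and the numerical bookkeeping it feeds. On one hand, $(\star)$ must be extracted carefully from the precise description of which vector is replaced in passing from step $s$ to step $s+1$ of the Smallest Vector Algorithm (it is really what makes the conclusion true: without it the angles are not pinned down). On the other hand, the constants are essentially sharp --- for instance $\cos(15\pi/31)\approx 0.0506$ sits barely above $\rho/2\le 0.05$, and the bound $r>0.979$ holds with a margin of only about $10^{-3}$ --- so the estimates have to be carried out with enough precision. Everything else is routine plane geometry.
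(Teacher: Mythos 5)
Your proof is correct and follows essentially the same route as the paper: after normalizing by $\|\mathbf{g}_{\mathtt{III}}^{\prime(s)}\|$, you use the membership $s\in T$ to force every pairwise difference $\|\mathbf{g}_i^{\prime(s)}-\mathbf{g}_j^{\prime(s)}\|$ past $\|\mathbf{g}_{\mathtt{III}}^{\prime(s)}\|$, expand by the law of cosines to get the lower angle bounds $\theta_1,\theta_2>15\pi/31$, invoke $\theta_1+\theta_2+\theta_3=2\pi$ (because $\mathbf{0}$ is interior to the projected triangle) to get the upper bounds, then push the numbers through for $r>0.979$, $\|\mathbf{g}_{\mathtt{II}}^{\prime}+\mathbf{g}_{\mathtt{III}}^{\prime}\|\le0.23$, and the final triangle inequality. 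You are more explicit than the paper about two points it leaves implicit --- that $\Delta_{\min}^{(s)}$ must coincide with $\|\mathbf{g}_{\mathtt{III}}^{\prime(s+1)}\|$ when $s\in T$, and that the projected cofactor relation forces $\mathbf{0}$ strictly inside the triangle so the angle sum is exactly $2\pi$ --- and your algebra for $r>0.979$ completes the square rather than solving the quadratic, but these are presentational differences, not a different argument.
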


\begin{proof}
Because in $T$, $\left\Vert \mathbf{g}_{i}^{\prime (s)}-\mathbf{g}%
_{j}^{\prime (s)}\right\Vert \geq \left\Vert \mathbf{g}_{\mathtt{III}%
}^{\prime (s+1)}\right\Vert \geq \left\Vert \mathbf{g}_{\mathtt{III}%
}^{\prime (s)}\right\Vert $ holds, both following inequalities also hold:

$\left\Vert \mathbf{g}_{\mathtt{III}}^{\prime (s)}-\mathbf{g}_{\mathtt{I}%
}^{\prime (s)}\right\Vert \geq \left\Vert \mathbf{g}_{\mathtt{III}}^{\prime
(s)}\right\Vert $ and $\left\Vert \mathbf{g}_{\mathtt{II}}^{\prime (s)}-%
\mathbf{g}_{\mathtt{I}}^{\prime (s)}\right\Vert \geq \left\Vert \mathbf{g}_{%
\mathtt{III}}^{\prime (s)}\right\Vert $ $\geq \left\Vert \mathbf{g}_{\mathtt{%
II}}^{\prime (s)}\right\Vert $.

The first inequality leads to:
\begin{equation*}
\left\Vert \mathbf{g}_{\mathtt{III}}^{\prime (s)}\right\Vert ^{2}\leq
\left\Vert \mathbf{g}_{\mathtt{III}}^{\prime (s)}\right\Vert ^{2}+\left\Vert 
\mathbf{g}_{\mathtt{I}}^{\prime (s)}\right\Vert \left\Vert \mathbf{g}_{%
\mathtt{III}}^{\prime (s)}\right\Vert \left( \frac{\left\Vert \mathbf{g}_{%
\mathtt{I}}^{\prime (s)}\right\Vert }{\left\Vert \mathbf{g}_{\mathtt{III}%
}^{\prime (s)}\right\Vert }-2\cos \left( \mathbf{g}_{\mathtt{III}}^{\prime
(s)},\mathbf{g}_{\mathtt{I}}^{\prime (s)}\right) \right) 
\end{equation*}%

Then: $0.1-2\cos \left( \mathbf{g}_{\mathtt{III}}^{\prime (s)},\mathbf{g}_{%
\mathtt{I}}^{\prime (s)}\right) \geq 0$; then: $\cos \left( \mathbf{g}_{%
\mathtt{III}}^{\prime (s)},\mathbf{g}_{\mathtt{I}}^{\prime (s)}\right) \leq
0.05$.

Hence: $\left\vert \measuredangle \left( \mathbf{g}_{\mathtt{III}}^{\prime
(s)},\mathbf{g}_{\mathtt{I}}^{\prime (s)}\right) \right\vert \geq \frac{%
15\pi }{31}$. Hence $\left\vert \measuredangle \left( \mathbf{g}_{\mathtt{II}%
}^{\prime (s)},\mathbf{g}_{\mathtt{I}}^{\prime (s)}\right) \right\vert \leq 
\frac{17\pi }{31}$.

In the same way, we obtain:

$\left\vert \measuredangle \left( \mathbf{g}_{\mathtt{II}}^{\prime (s)},%
\mathbf{g}_{\mathtt{I}}^{\prime (s)}\right) \right\vert \geq \frac{15\pi }{31%
}$ and $\left\vert \measuredangle \left( \mathbf{g}_{\mathtt{III}}^{\prime
(s)},\mathbf{g}_{\mathtt{I}}^{\prime (s)}\right) \right\vert \leq \frac{%
17\pi }{31}$.

Again, let's use the inequality: $\left\Vert \mathbf{g}_{\mathtt{III}%
}^{\prime (s)}\right\Vert ^{2}\leq \left\Vert \mathbf{g}_{\mathtt{II}%
}^{\prime (s)}-\mathbf{g}_{\mathtt{I}}^{\prime (s)}\right\Vert ^{2}$. Then

$\left\Vert \mathbf{g}_{\mathtt{III}}^{\prime (s)}\right\Vert ^{2}\leq
\left\Vert \mathbf{g}_{\mathtt{II}}^{\prime (s)}\right\Vert ^{2}+\left\Vert 
\mathbf{g}_{\mathtt{I}}^{\prime (s)}\right\Vert ^{2}-2\cos \left( \mathbf{g}%
_{\mathtt{II}}^{\prime (s)},\mathbf{g}_{\mathtt{I}}^{\prime (s)}\right)
\left\Vert \mathbf{g}_{\mathtt{II}}^{\prime (s)}\right\Vert \left\Vert 
\mathbf{g}_{\mathtt{I}}^{\prime (s)}\right\Vert $.

But \ $-2\cos \left( \left( \mathbf{g}_{\mathtt{II}}^{\prime (s)},\mathbf{g}%
_{\mathtt{I}}^{\prime (s)}\right) \right) \leq -2\cos \left( \frac{17\pi }{31%
}\right) \leq 0.31$.

Then, dividing by $\left\Vert \mathbf{g}_{\mathtt{III}}^{\prime
(s)}\right\Vert ^{2}$, we obtain

$1\leq \frac{\left\Vert \mathbf{g}_{\mathtt{II}}^{\prime (s)}\right\Vert ^{2}%
}{\left\Vert \mathbf{g}_{\mathtt{III}}^{\prime (s)}\right\Vert ^{2}}%
+0.01+0.031\frac{\left\Vert \mathbf{g}_{\mathtt{II}}^{\prime (s)}\right\Vert 
}{\left\Vert \mathbf{g}_{\mathtt{III}}^{\prime (s)}\right\Vert }$, i.e.: $%
x^{2}+0.031x-0.99\geq 0$,

with $x=\frac{\left\Vert \mathbf{g}_{\mathtt{II}}^{\prime (s)}\right\Vert }{%
\left\Vert \mathbf{g}_{\mathtt{III}}^{\prime (s)}\right\Vert }$. This
implies $x=\frac{\left\Vert \mathbf{g}_{\mathtt{II}}^{\prime (s)}\right\Vert 
}{\left\Vert \mathbf{g}_{\mathtt{III}}^{\prime (s)}\right\Vert }\geq 0.979.$
We have

$\left\Vert \mathbf{g}_{\mathtt{II}}^{\prime (s)}+\mathbf{g}_{\mathtt{III}%
}^{\prime (s)}\right\Vert ^{2}\leq \left\Vert \mathbf{g}_{\mathtt{II}%
}^{\prime (s)}\right\Vert ^{2}+\left\Vert \mathbf{g}_{\mathtt{III}}^{\prime
(s)}\right\Vert ^{2}+2\cos \left( \frac{30\pi }{31}\right) \left\Vert 
\mathbf{g}_{\mathtt{III}}^{\prime (s)}\right\Vert \left\Vert \mathbf{g}_{%
\mathtt{II}}^{\prime (s)}\right\Vert $, then

$\left\Vert \mathbf{g}_{\mathtt{II}}^{\prime (s)}+\mathbf{g}_{\mathtt{III}%
}^{\prime (s)}\right\Vert ^{2}\leq 2\left\Vert \mathbf{g}_{\mathtt{III}%
}^{\prime (s)}\right\Vert ^{2}-1.9897\times 0.979\left\Vert \mathbf{g}_{%
\mathtt{III}}^{\prime (s)}\right\Vert ^{2},$ then \newline
$\left\Vert \mathbf{g}_{%
\mathtt{II}}^{\prime (s)}+\mathbf{g}_{\mathtt{III}}^{\prime (s)}\right\Vert
\leq \sqrt{0.0521}\left\Vert \mathbf{g}_{\mathtt{III}}^{\prime
(s)}\right\Vert \leq 0.23\left\Vert \mathbf{g}_{\mathtt{III}}^{\prime
(s)}\right\Vert $.\newline
Hence the last conclusion is obtained.
\end{proof}

\subsection{From a Lemma to the Geometrical Theorem (Theorem 3)}

\begin{notation}
The sequence $\left( \alpha ^{\left( s\right) }\right) $ is defined by: $%
\alpha ^{\left( s\right) }=\frac{A^{(s)}}{\left\Vert \mathbf{g}_{\mathtt{III}%
}^{\prime (s)}\right\Vert ^{2}}.$ \newline
If $\underset{s\rightarrow +\infty }{\lim
\sup ~}\alpha ^{\left( s\right) }>0$, then the algorithm is balanced$,$ i.e.
the triangles on $\mathbb{P}$ do not \emph{needle}.
\end{notation}

\begin{lemma}
\textbf{(Monotonic Subsequence Lemma) }Let $\left[ m;+\infty \right[ $ be an
interval of $\mathbb{N}$ such that $\left[ m;+\infty \right[ \cap T\subset
T^{\ast }$, which means that every \emph{advancing }triangle with its range
in $\left[ m;+\infty \right[ $ is \emph{almost flat}. Then the sequence $%
\left( \alpha ^{\left( s\right) }\right) $ is\emph{\ increasing }on $\left[
m;+\infty \right[ \cap T$, \ which means that for any $s,t\in \left[
m;+\infty \right[ \cap T$ \ with $s\leq t$, $\frac{A^{(s)}}{\left\Vert 
\mathbf{g}_{\mathtt{III}}^{\prime (s)}\right\Vert ^{2}}\leq \frac{A^{(t)}}{%
\left\Vert \mathbf{g}_{\mathtt{III}}^{\prime (t)}\right\Vert ^{2}}$ holds.
\end{lemma}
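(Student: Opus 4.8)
The plan is to track how the quantity $\alpha^{(s)} = A^{(s)}/\|\mathbf{g}_{\mathtt{III}}^{\prime(s)}\|^2$ changes as $s$ runs through consecutive elements of $[m;+\infty[\,\cap\, T$, and show that each such transition is non-decreasing. By Remark 1, between two consecutive elements $s < t$ of $T$ the area only grows ($A^{(t)} \ge A^{(s)}$ since $(A^{(s)})$ is increasing), while $\|\mathbf{g}_{\mathtt{III}}^{\prime}\|$ grows only at indices in $T$; so the real content is to compare $\alpha$ at two consecutive $T$-indices. First I would reduce to the case where $s,t$ are \emph{consecutive} in $[m;+\infty[\,\cap\, T$: if monotonicity holds across each consecutive pair, it holds for all pairs by transitivity. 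So fix consecutive $s < t$ in $[m;+\infty[\,\cap\, T$; note $t = s'$ where $s'$ is the successor of $s$ in $T$, so in particular $s \in T^\ast$ by the hypothesis $[m;+\infty[\,\cap\, T \subset T^\ast$, meaning the triangle at step $s$ is almost flat and the Flat Triangle Lemma applies.

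**Key steps.** With $s \in T^\ast$, the Flat Triangle Lemma gives quantitative control: $\|\mathbf{g}_{\mathtt{II}}^{\prime(s)}\|/\|\mathbf{g}_{\mathtt{III}}^{\prime(s)}\| \ge 0.979$, the angle $\measuredangle(\mathbf{g}_{\mathtt{III}}^{\prime(s)},\mathbf{g}_{\mathtt{II}}^{\prime(s)})$ is close to $\pi$, the small vector $\mathbf{g}_{\mathtt{I}}^{\prime(s)}$ is nearly orthogonal to the other two, and $\|\mathbf{g}_{\mathtt{I}}^{\prime(s)} - \mathbf{g}_{\mathtt{II}}^{\prime(s)} - \mathbf{g}_{\mathtt{III}}^{\prime(s)}\| \le 0.33\|\mathbf{g}_{\mathtt{III}}^{\prime(s)}\|$. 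Next I would determine which subtraction the SVA actually performs at step $s$: because $\mathbf{g}_{\mathtt{II}}^{\prime(s)}$ and $\mathbf{g}_{\mathtt{III}}^{\prime(s)}$ point in nearly opposite directions and have comparable lengths, the smallest difference $\Delta_{\min}$ is realized (up to identifying which index is $\mathtt{I},\mathtt{II},\mathtt{III}$) by a combination involving these two — one checks $\|\mathbf{g}_{\mathtt{II}}^{\prime(s)} + \mathbf{g}_{\mathtt{III}}^{\prime(s)}\| \le 0.23\|\mathbf{g}_{\mathtt{III}}^{\prime(s)}\|$ is the smallest of the three candidate difference-lengths, since the other two are $\ge \|\mathbf{g}_{\mathtt{III}}^{\prime(s)}\|$ (this is exactly the pair of inequalities opening the proof of the Flat Triangle Lemma). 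So the new vector is $\mathbf{g}_{\mathtt{II}}^{\prime(s)} + \mathbf{g}_{\mathtt{III}}^{\prime(s)}$ (or its negative), which is \emph{short}; consequently at step $s{+}1 = t$ the new triangle has vertices, in some order, $\mathbf{g}_{\mathtt{I}}^{\prime(s)}$, one of $\mathbf{g}_{\mathtt{II}}^{\prime(s)}$ or $\mathbf{g}_{\mathtt{III}}^{\prime(s)}$ (the longer-surviving one), and the short new vector $\mathbf{g}_{\mathtt{II}}^{\prime(s)} + \mathbf{g}_{\mathtt{III}}^{\prime(s)}$. Now compute: the new doubled area $A^{(t)}$ equals $\|\mathbf{g}_i^{\prime(s)} \wedge \mathbf{g}_j^{\prime(s)}\| + A^{(s)}$ for the appropriate $i,j$ (Remark 1b), and one identifies this increment; simultaneously $\|\mathbf{g}_{\mathtt{III}}^{\prime(t)}\|$ is the largest of the three new projected lengths, which — using the angle estimates — is the surviving long vector, essentially $\|\mathbf{g}_{\mathtt{III}}^{\prime(s)}\|$ up to a controlled factor. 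Plugging these into $\alpha^{(t)} = A^{(t)}/\|\mathbf{g}_{\mathtt{III}}^{\prime(t)}\|^2$ and comparing with $\alpha^{(s)} = A^{(s)}/\|\mathbf{g}_{\mathtt{III}}^{\prime(s)}\|^2$ yields $\alpha^{(t)} \ge \alpha^{(s)}$, because the denominator does not increase faster than the numerator: the numerator gains the full extra triangle area $\|\mathbf{g}_{\mathtt{I}}^{\prime(s)} \wedge \mathbf{g}_{\text{(long)}}^{\prime(s)}\|$ while the denominator stays comparable.

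**Main obstacle.** The delicate point is the bookkeeping of \emph{which} vector becomes $\mathbf{g}_{\mathtt{III}}^{\prime(t)}$ and getting a clean two-sided estimate on $\|\mathbf{g}_{\mathtt{III}}^{\prime(t)}\|$ in terms of $\|\mathbf{g}_{\mathtt{III}}^{\prime(s)}\|$: a priori the longer of $\mathbf{g}_{\mathtt{II}}^{\prime(s)},\mathbf{g}_{\mathtt{III}}^{\prime(s)}$ is the new $\mathbf{g}_{\mathtt{III}}^{\prime(t)}$ only if it still exceeds $\|\mathbf{g}_{\mathtt{I}}^{\prime(s)}\|$ and $\|\mathbf{g}_{\mathtt{II}}^{\prime(s)}+\mathbf{g}_{\mathtt{III}}^{\prime(s)}\|$, which the Flat Triangle Lemma bounds confirm ($\le 0.33$ and $\mathbf{g}_{\mathtt{I}}$ tiny vs.\ $0.979$). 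The numerical margins in the Flat Triangle Lemma ($0.979$, $0.23$, $0.33$, the angle windows $[15\pi/31, 17\pi/31]$) are exactly what is needed to make all these comparisons strict, so the argument should go through without a fresh idea — it is a careful but elementary computation with the law of cosines and the wedge-product area formula, combined with the characterization of the SVA step. One should also double-check the edge case where $\|\mathbf{g}_{\mathtt{II}}^{\prime(s)}\|$ and $\|\mathbf{g}_{\mathtt{III}}^{\prime(s)}\|$ are almost equal so that the "longer survivor" label could swap; but since we only need $\alpha$ monotone and $\alpha$ is symmetric in relabelling of the two long vectors, this ambiguity does not affect the conclusion.
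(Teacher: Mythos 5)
Your reduction to consecutive elements of $[m;+\infty[\,\cap\, T$ and your use of the Flat Triangle Lemma are both in line with the paper. However, there is a genuine error at the central step, where you ``determine which subtraction the SVA actually performs,'' and it invalidates the subsequent computation.

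You claim the new vector is $\mathbf{g}_{\mathtt{II}}^{\prime(s)}+\mathbf{g}_{\mathtt{III}}^{\prime(s)}$ (or its negative) and that it is short, because $\|\mathbf{g}_{\mathtt{II}}^{\prime(s)}+\mathbf{g}_{\mathtt{III}}^{\prime(s)}\|\le 0.23\|\mathbf{g}_{\mathtt{III}}^{\prime(s)}\|$ ``is the smallest of the three candidate difference-lengths.'' But the SVA's candidates are the three \emph{differences} $\|\mathbf{g}_1^{\prime(s)}-\mathbf{g}_0^{\prime(s)}\|$, $\|\mathbf{g}_2^{\prime(s)}-\mathbf{g}_1^{\prime(s)}\|$, $\|\mathbf{g}_2^{\prime(s)}-\mathbf{g}_0^{\prime(s)}\|$; a \emph{sum} $\mathbf{g}_a+\mathbf{g}_b$ is never formed. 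Precisely because $\mathbf{g}_{\mathtt{II}}^{\prime(s)}$ and $\mathbf{g}_{\mathtt{III}}^{\prime(s)}$ are near-antipodal with comparable lengths, their genuine \emph{difference} $\|\mathbf{g}_{\mathtt{III}}^{\prime(s)}-\mathbf{g}_{\mathtt{II}}^{\prime(s)}\|$ is close to $2\|\mathbf{g}_{\mathtt{III}}^{\prime(s)}\|$, hence the \emph{longest} of the three, not the shortest; the minimum is attained by a pair involving $\mathbf{g}_{\mathtt{I}}$. Moreover, since $s\in T$, the largest projected length strictly increases, so the newly created vector has projection $\Delta_{\min}=\|\mathbf{g}_{\mathtt{III}}^{\prime(s+1)}\|>\|\mathbf{g}_{\mathtt{III}}^{\prime(s)}\|$ --- it is the longest, not ``short.'' This also undercuts your ``main obstacle'' paragraph: the new $\mathbf{g}_{\mathtt{III}}^{\prime(s+1)}$ is not one of $\mathbf{g}_{\mathtt{II}}^{\prime(s)},\mathbf{g}_{\mathtt{III}}^{\prime(s)}$ surviving; it is the fresh difference. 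Because the identification of the step is wrong, the area increment and the denominator comparison you then carry out do not match what the algorithm actually does.

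The paper's proof instead enumerates the four genuinely possible forms for $\mathbf{g}_{\mathtt{III}}^{\prime(s+1)}$, namely $\pm(\mathbf{g}_{\mathtt{I}}^{\prime(s)}-\mathbf{g}_{\mathtt{II}}^{\prime(s)})$ and $\pm(\mathbf{g}_{\mathtt{I}}^{\prime(s)}-\mathbf{g}_{\mathtt{III}}^{\prime(s)})$ (the sign depending on which cofactor is larger), and in each case proves the one-step inequality $A^{(s)}/\|\mathbf{g}_{\mathtt{III}}^{\prime(s)}\|^2\le A^{(s+1)}/\|\mathbf{g}_{\mathtt{III}}^{\prime(s+1)}\|^2$ via the wedge-product area formula and the Flat Triangle Lemma's numerical bounds; the passage from $s+1$ to the successor $s'$ in $T$ then follows because $A$ is non-decreasing while $\|\mathbf{g}_{\mathtt{III}}'\|$ does not increase outside $T$. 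To repair your argument you would need to replace the sum by the correct differences and redo the area bookkeeping, at which point you would essentially reproduce the paper's four-case computation.
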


Let's admit this Lemma, which is proved in the next Subsection. Then we can
demonstrate the Geometrical Theorem, Theorem 3.

\begin{proof}[Proof of Theorem 3]
Let's suppose that the conclusion of the geometrical Lemma is FALSE, namely
that $\underset{s\rightarrow +\infty ,s\in T}{\lim }\frac{\rho ^{(s)}}{%
\left\Vert \mathbf{g}_{\mathtt{III}}^{\prime (s)}\right\Vert }=0$. Then, by
the Lemma on needling triangles of the second subsection above, $\underset{%
s\rightarrow +\infty ,s\in T}{\lim }\frac{A^{(s)}}{\left\Vert \mathbf{g}_{%
\mathtt{III}}^{\prime (s)}\right\Vert ^{2}}=0$. By the same Lemma, we have \newline $%
\underset{s\rightarrow +\infty ,s\in T}{\lim }\left( \frac{\left\Vert 
\mathbf{g}_{\mathtt{I}}^{\prime (s)}\right\Vert }{\left\Vert \mathbf{g}_{%
\mathtt{II}}^{\prime (s)}\right\Vert }+\left( \pi -\left\vert \measuredangle
\left( \mathbf{g}_{\mathtt{II}}^{\prime (s)},\mathbf{g}_{\mathtt{III}%
}^{\prime (s)}\right) \right\vert \right) \right) =0$. Then, there exists an
integer $m$ such that $\ \left[ m;+\infty \right[ \cap T\subset T^{\ast }$,
which means that with a range great enough, any advancing triangle is almost
flat. Then, by the Monotonic Subsequence Lemma, the sequence of the $\alpha
^{\left( s\right) }=\frac{A^{(s)}}{\left\Vert \mathbf{g}_{\mathtt{III}%
}^{\prime (s)}\right\Vert ^{2}}$ with $s\geq m$ is \emph{increasing }on $T$,
which is infinite. This is contradictory with $\underset{s\rightarrow
+\infty ,s\in T}{\lim }\frac{A^{(s)}}{\left\Vert \mathbf{g}_{\mathtt{III}%
}^{\prime (s)}\right\Vert ^{2}}=0$. Then, we have $\underset{s\rightarrow
+\infty ,s\in T}{\lim \sup }\frac{\rho ^{(s)}}{\left\Vert \mathbf{g}_{%
\mathtt{III}}^{\prime (s)}\right\Vert }>0$\ and the \textbf{Geometrical
Theorem is proved.}
\end{proof}

This Monotonic Subsequence Lemma has now to be proved.

\subsection{Proof of the Monotonic Subsequence Lemma}

Let $s$ be an integer in the interval $\left[ m;+\infty \right[ \cap
T\subset T^{\ast }$ as in the Hypothesis. Let's denote $s^{\prime }$ the
successor of $s$ in $T$, i.e. the smallest integer $t$ in $T$ such that $s<t$

In order to establish our Lemma, it suffices to show that 
\begin{equation}
\dfrac{A^{(s)}}{\left\Vert \mathbf{g}_{\mathtt{III}}^{\prime (s)}\right\Vert
^{2}}\leq \dfrac{A^{(s^{\prime })}}{\left\Vert \mathbf{g}_{\mathtt{III}%
}^{\prime (s^{\prime })}\right\Vert ^{2}}.  \label{Ineq MSL}
\end{equation}

We have four cases:

Either $\mathbf{g}_{\mathtt{III}}^{\prime (s+1)}=\mathbf{g}_{\mathtt{I}%
}^{\prime (s)}-\mathbf{g}_{\mathtt{II}}^{\prime (s)}$ (Case $\left( \mathtt{I%
}\ \smallsetminus \ \mathtt{II}\right) $), or $\mathbf{g}_{\mathtt{III}%
}^{\prime (s+1)}=\mathbf{g}_{\mathtt{I}}^{\prime (s)}-\mathbf{g}_{\mathtt{III%
}}^{\prime (s)}$ $\left( \mathtt{I}\ \smallsetminus \ \mathtt{III}\right) $,
or $\mathbf{g}_{\mathtt{III}}^{\prime (s+1)}=\mathbf{g}_{\mathtt{II}%
}^{\prime (s)}-\mathbf{g}_{\mathtt{I}}^{\prime (s)}$ $\left( \mathtt{II}\
\smallsetminus \ \mathtt{I}\right) $, or $\mathbf{g}_{\mathtt{III}}^{\prime
(s+1)}=\mathbf{g}_{\mathtt{III}}^{\prime (s)}-\mathbf{g}_{\mathtt{I}%
}^{\prime (s)}$ $\left( \mathtt{III}\ \smallsetminus \ \mathtt{I}\right) .$

We're going to prove the inequality (Ineq~\ref{Ineq MSL})
only in the case $\left( \mathtt{II}\ \smallsetminus \ \mathtt{I}\right) $.
The demonstration is similar, or easier, in the three other cases.

\textbf{Case }$\left( \mathtt{II}\ \smallsetminus \ \mathtt{I}\right) $%
\textbf{:}\texttt{\ }$\mathbf{g}_{\mathtt{III}}^{\prime (s+1)}=\mathbf{g}_{%
\mathtt{II}}^{\prime (s)}-\mathbf{g}_{\mathtt{I}}^{\prime (s)}$.

We have to obtain first \newline 
$\dfrac{A^{(s)}}{\left\Vert \mathbf{g}_{\mathtt{III}%
}^{\prime (s)}\right\Vert ^{2}}\leq \dfrac{A^{(s+1)}}{\left\Vert \mathbf{g}_{%
\mathtt{III}}^{\prime (s+1)}\right\Vert ^{2}}$. i.e. $\ \dfrac{A^{(s+1)}}{%
A^{(s)}}\geq \dfrac{\left\Vert \mathbf{g}_{\mathtt{II}}^{\prime (s)}-\mathbf{%
g}_{\mathtt{I}}^{\prime (s)}\right\Vert ^{2}}{\left\Vert \mathbf{g}_{\mathtt{%
III}}^{\prime (s)}\right\Vert ^{2}}$.

It's sufficient to show: $\frac{A^{(s+1)}}{A^{(s)}}\geq \left( 1+\frac{%
\left\Vert \mathbf{g}_{\mathtt{I}}^{\prime (s)}\right\Vert }{\left\Vert 
\mathbf{g}_{\mathtt{III}}^{\prime (s)}\right\Vert }\right) ^{2}$.

But: $\frac{A^{(s+1)}}{A^{(s)}}=\frac{A^{(s)}+\left\Vert \mathbf{g}_{%
\mathtt{III}}^{\prime (s)}\wedge \mathbf{g}_{\mathtt{I}}^{\prime
(s)}\right\Vert }{A^{(s)}}=1+\frac{\left\Vert \mathbf{g}_{\mathtt{III}%
}^{\prime (s)}\wedge \mathbf{g}_{\mathtt{I}}^{\prime (s)}\right\Vert }{%
\left\Vert \mathbf{g}_{\mathtt{I}}^{\prime (s)}\wedge \mathbf{g}_{\mathtt{II}%
}^{\prime (s)}+\mathbf{g}_{\mathtt{II}}^{\prime (s)}\wedge \mathbf{g}_{%
\mathtt{III}}^{\prime (s)}+\mathbf{g}_{\mathtt{III}}^{\prime (s)}\wedge 
\mathbf{g}_{\mathtt{I}}^{\prime (s)}\right\Vert }$. It's sufficient to show:

$1+\frac{\left\Vert \mathbf{g}_{\mathtt{III}}^{\prime (s)}\wedge \mathbf{g}_{%
\mathtt{I}}^{\prime (s)}\right\Vert }{\left\Vert \mathbf{g}_{\mathtt{I}%
}^{\prime (s)}\wedge \mathbf{g}_{\mathtt{II}}^{\prime (s)}+\mathbf{g}_{%
\mathtt{II}}^{\prime (s)}\wedge \mathbf{g}_{\mathtt{III}}^{\prime (s)}+%
\mathbf{g}_{\mathtt{III}}^{\prime (s)}\wedge \mathbf{g}_{\mathtt{I}}^{\prime
(s)}\right\Vert }$ $\geq \left( 1+\frac{\left\Vert \mathbf{g}_{\mathtt{I}%
}^{\prime (s)}\right\Vert }{\left\Vert \mathbf{g}_{\mathtt{III}}^{\prime
(s)}\right\Vert }\right) ^{2}$ i.e.:

$\frac{\left\Vert \mathbf{g}_{\mathtt{III}}^{\prime (s)}\wedge \mathbf{g}_{%
\mathtt{I}}^{\prime (s)}\right\Vert }{\left\Vert \mathbf{g}_{\mathtt{I}%
}^{\prime (s)}\wedge \mathbf{g}_{\mathtt{II}}^{\prime (s)}+\mathbf{g}_{%
\mathtt{II}}^{\prime (s)}\wedge \mathbf{g}_{\mathtt{III}}^{\prime (s)}+%
\mathbf{g}_{\mathtt{III}}^{\prime (s)}\wedge \mathbf{g}_{\mathtt{I}}^{\prime
(s)}\right\Vert }\geq \frac{\left\Vert \mathbf{g}_{\mathtt{I}}^{\prime
(s)}\right\Vert }{\left\Vert \mathbf{g}_{\mathtt{III}}^{\prime
(s)}\right\Vert }\left( \frac{\left\Vert \mathbf{g}_{\mathtt{I}}^{\prime
(s)}\right\Vert }{\left\Vert \mathbf{g}_{\mathtt{III}}^{\prime
(s)}\right\Vert }+2\right) $; i.e.:

$1+\frac{\left\Vert \mathbf{g}_{\mathtt{I}}^{\prime (s)}\wedge \mathbf{g}_{%
\mathtt{II}}^{\prime (s)}\right\Vert }{\left\Vert \mathbf{g}_{\mathtt{III}%
}^{\prime (s)}\wedge \mathbf{g}_{\mathtt{I}}^{\prime (s)}\right\Vert }+\frac{%
\left\Vert \mathbf{g}_{\mathtt{II}}^{\prime (s)}\wedge \mathbf{g}_{\mathtt{%
III}}^{\prime (s)}\right\Vert }{\left\Vert \mathbf{g}_{\mathtt{III}}^{\prime
(s)}\wedge \mathbf{g}_{\mathtt{I}}^{\prime (s)}\right\Vert }\leq \frac{%
\left\Vert \mathbf{g}_{\mathtt{III}}^{\prime (s)}\right\Vert }{\left\Vert 
\mathbf{g}_{\mathtt{I}}^{\prime (s)}\right\Vert \left( \frac{\left\Vert 
\mathbf{g}_{\mathtt{I}}^{\prime (s)}\right\Vert }{\left\Vert \mathbf{g}_{%
\mathtt{III}}^{\prime (s)}\right\Vert }+2\right) }$. To obtain that, it's
enough to show: $1+\frac{\left\Vert \mathbf{g}_{\mathtt{I}}^{\prime
(s)}\wedge \mathbf{g}_{\mathtt{II}}^{\prime (s)}\right\Vert }{\left\Vert 
\mathbf{g}_{\mathtt{III}}^{\prime (s)}\wedge \mathbf{g}_{\mathtt{I}}^{\prime
(s)}\right\Vert }+\frac{\left\Vert \mathbf{g}_{\mathtt{II}}^{\prime
(s)}\wedge \mathbf{g}_{\mathtt{III}}^{\prime (s)}\right\Vert }{\left\Vert 
\mathbf{g}_{\mathtt{III}}^{\prime (s)}\wedge \mathbf{g}_{\mathtt{I}}^{\prime
(s)}\right\Vert }\leq \frac{\left\Vert \mathbf{g}_{\mathtt{III}}^{\prime
(s)}\right\Vert }{2.1\left\Vert \mathbf{g}_{\mathtt{I}}^{\prime
(s)}\right\Vert }$, i.e.:%
\begin{equation}
\frac{2.1\left\Vert \mathbf{g}_{\mathtt{I}}^{\prime (s)}\right\Vert }{%
\left\Vert \mathbf{g}_{\mathtt{III}}^{\prime (s)}\right\Vert }+\frac{%
2.1\left\Vert \mathbf{g}_{\mathtt{I}}^{\prime (s)}\right\Vert \left\Vert 
\mathbf{g}_{\mathtt{I}}^{\prime (s)}\wedge \mathbf{g}_{\mathtt{II}}^{\prime
(s)}\right\Vert }{\left\Vert \mathbf{g}_{\mathtt{III}}^{\prime
(s)}\right\Vert \left\Vert \mathbf{g}_{\mathtt{III}}^{\prime (s)}\wedge 
\mathbf{g}_{\mathtt{I}}^{\prime (s)}\right\Vert }+\frac{2.1\left\Vert 
\mathbf{g}_{\mathtt{I}}^{\prime (s)}\right\Vert \left\Vert \mathbf{g}_{%
\mathtt{II}}^{\prime (s)}\wedge \mathbf{g}_{\mathtt{III}}^{\prime
(s)}\right\Vert }{\left\Vert \mathbf{g}_{\mathtt{III}}^{\prime
(s)}\right\Vert \left\Vert \mathbf{g}_{\mathtt{III}}^{\prime (s)}\wedge 
\mathbf{g}_{\mathtt{I}}^{\prime (s)}\right\Vert }\leq 1  \label{Ineg3T}
\end{equation}%
We have: \newline
$\dfrac{2.1\left\Vert \mathbf{g}_{\mathtt{I}}^{\prime
(s)}\right\Vert }{\left\Vert \mathbf{g}_{\mathtt{III}}^{\prime
(s)}\right\Vert }\dfrac{\left\Vert \mathbf{g}_{\mathtt{I}}^{\prime
(s)}\wedge \mathbf{g}_{\mathtt{II}}^{\prime (s)}\right\Vert }{\left\Vert 
\mathbf{g}_{\mathtt{III}}^{\prime (s)}\wedge \mathbf{g}_{\mathtt{I}}^{\prime
(s)}\right\Vert }=\dfrac{2.1\left\Vert \mathbf{g}_{\mathtt{I}}^{\prime
(s)}\right\Vert }{\left\Vert \mathbf{g}_{\mathtt{III}}^{\prime
(s)}\right\Vert }\dfrac{\sin \left( \left\vert \left( \mathbf{g}_{\mathtt{I}%
}^{\prime (s)},\mathbf{g}_{\mathtt{II}}^{\prime (s)}\right) \right\vert
\right) \left\Vert \mathbf{g}_{\mathtt{II}}^{\prime (s)}\right\Vert }{\sin
\left( \left\vert \left( \mathbf{g}_{\mathtt{I}}^{\prime (s)},\mathbf{g}_{%
\mathtt{III}}^{\prime (s)}\right) \right\vert \right) \left\Vert \mathbf{g}_{%
\mathtt{III}}^{\prime (s)}\right\Vert }$

$\leq \dfrac{2.1\times 0.1}{\sin \left( \frac{17\pi }{31}\right) }\leq 0.22$

In addition: $\dfrac{2.1\left\Vert \mathbf{g}_{\mathtt{I}}^{\prime
(s)}\right\Vert }{\left\Vert \mathbf{g}_{\mathtt{III}}^{\prime
(s)}\right\Vert }\cdot \dfrac{\left\Vert \mathbf{g}_{\mathtt{II}}^{\prime
(s)}\wedge \mathbf{g}_{\mathtt{III}}^{\prime (s)}\right\Vert }{\left\Vert 
\mathbf{g}_{\mathtt{III}}^{\prime (s)}\wedge \mathbf{g}_{\mathtt{I}}^{\prime
(s)}\right\Vert }=\dfrac{2.1\sin \left( \left\vert \left( \mathbf{g}_{%
\mathtt{III}}^{\prime (s)},\mathbf{g}_{\mathtt{II}}^{\prime (s)}\right)
\right\vert \right) \left\Vert \mathbf{g}_{\mathtt{II}}^{\prime
(s)}\right\Vert }{\sin \left( \left\vert \left( \mathbf{g}_{\mathtt{III}%
}^{\prime (s)},\mathbf{g}_{\mathtt{I}}^{\prime (s)}\right) \right\vert
\right) \left\Vert \mathbf{g}_{\mathtt{III}}^{\prime (s)}\right\Vert }$

$\leq \dfrac{2.1\sin \left( \frac{30\pi }{31}\right) }{\sin \left( \frac{%
17\pi }{31}\right) }\leq 0.22.$ 

Finally: $\dfrac{2.1\times \left\Vert \mathbf{g}_{\mathtt{I}}^{\prime
(s)}\right\Vert }{\left\Vert \mathbf{g}_{\mathtt{III}}^{\prime
(s)}\right\Vert }$ $\leq 2.1\times 0.1\leq 0.21$.

The three last inequalities lead to the sufficient condition: (Ineq \ref%
{Ineg3T}). Then we have proved: $\frac{A^{(s)}}{\left\Vert \mathbf{g}_{%
\mathtt{III}}^{\prime (s)}\right\Vert ^{2}}\leq \frac{A^{(s+1)}}{\left\Vert 
\mathbf{g}_{\mathtt{III}}^{\prime (s+1)}\right\Vert ^{2}}$.

If $\left( s+1\right) \in T,$ the proof of  (Ineq~\ref{Ineq MSL}) is
finished. If now $\left( s+1\right) \notin T$, then we have both $\left\Vert 
\mathbf{g}_{\mathtt{III}}^{\prime (s+1)}\right\Vert \leq \left\Vert \mathbf{g%
}_{\mathtt{III}}^{\prime (s+2)}\right\Vert $, and $A^{(s)}\leq A^{(s+2)}$
then$\frac{A^{(s)}}{\left\Vert \mathbf{g}_{\mathtt{III}}^{\prime
(s)}\right\Vert ^{2}}\leq \frac{A^{(s+2)}}{\left\Vert \mathbf{g}_{\mathtt{III%
}}^{\prime (s+2)}\right\Vert ^{2}}$ and again  the proof of (Ineq~\ref{Ineq
MSL}) is finished.

In the same way, as long as $\left( s+i\right) \notin T$, for $i=1,2,...$,
we have:

$\frac{A^{(s)}}{\left\Vert \mathbf{g}_{\mathtt{III}}^{\prime (s)}\right\Vert
^{2}}\leq \frac{A^{(s+2)}}{\left\Vert \mathbf{g}_{\mathtt{III}}^{\prime
(s+2)}\right\Vert ^{2}}\leq \frac{A^{(s+3)}}{\left\Vert \mathbf{g}_{\mathtt{%
III}}^{\prime (s+3)}\right\Vert ^{2}}\leq ...\leq \frac{A^{(s+i)}}{%
\left\Vert \mathbf{g}_{\mathtt{III}}^{\prime (s+i)}\right\Vert ^{2}}\leq
...\leq \frac{A^{(s^{\prime })}}{\left\Vert \mathbf{g}_{\mathtt{III}%
}^{\prime (s^{\prime })}\right\Vert ^{2}},$ $s^{\prime }$ being the
successor of $s$ in $T$.

Then $\dfrac{A^{(s)}}{\left\Vert \mathbf{g}_{\mathtt{III}}^{\prime
(s)}\right\Vert ^{2}}\leq \dfrac{A^{(s^{\prime })}}{\left\Vert \mathbf{g}_{%
\mathtt{III}}^{\prime (s^{\prime })}\right\Vert ^{2}}$ and the conclusion 
(Ineq~\ref{Ineq MSL})  is reached in the case $\left( \mathtt{II}\
\smallsetminus \ \mathtt{I}\right) $.

The reasoning is similar in all the four cases. 

\textbf{Then the conclusion of the Monotonic Sequence Lemma is established.
So is the Geometrical Theorem, and also the Dirichlet Theorem and the
Lagrange Theorem}, but the last one only in a special case. We have to prove
it generally.

\section{Lagrange Theorem from Dirichlet properties: complete demonstration}

Now, using the Theorem on Dirichlet Properties, we prove the Lagrange
Theorem with the help of some Definitions, Lemma, Propositions. First we
give the statements, then the proofs.

\subsection{Definition and Statements of \S 5}

\begin{definition}[Max-Dirichlet Property]
\ It will be said that a sequence $\left( \mathbf{P}^{(s)}\right) =\left( 
\mathbf{p}_{0}^{(s)},\mathbf{p}_{1}^{(s)},\mathbf{p}_{2}^{(s)}\right) $ of
triplets of integer vectors has the max-Dirichlet Property concerning $%
\mathbb{D}=\mathbb{R}\mathbf{X}$ (resp: $\mathbb{P}=\mathbf{X}^{\perp }$) if
there exists an infinite subset $S$ of $\mathbb{N}$ such that: $\underset{%
s\in S}{\sup }\left[ \left( \underset{i=0,1,2}{\max }\left\Vert \mathbf{p}%
_{i}^{\prime (s)}\right\Vert \right) ^{2}\left( \underset{i=0,1,2}{\max }%
\left\Vert \mathbf{p}^{\prime \prime }{}_{i}^{(s)}\right\Vert \right) \right]
<+\infty $ , with

$\underset{s\rightarrow +\infty ,s\in S}{\lim }\left( \underset{i=0,1,2}{%
\max }\left\Vert \mathbf{p}_{i}^{\prime (s)}\right\Vert \right) =0$ (resp: $%
\underset{s\rightarrow +\infty ,s\in S}{\lim }\left( \underset{i=0,1,2}{\max 
}\left\Vert \mathbf{p}^{\prime \prime }{}_{i}^{(s)}\right\Vert \right) =0$).
\end{definition}

\begin{lemma}[Polarity and Dirichlet Property]
Let $\left( \mathbf{P}^{(s)}\right) $ be a sequence of integer matrices, 
\emph{all with the same determinant }$D>0$\emph{, up to the sign}, id est,
for each $s\in \mathbb{N}\boldsymbol{,}$ $\det \left( \mathbf{P}%
^{(s)}\right) =\varepsilon ^{\left( s\right) }D$, with $\varepsilon ^{\left(
s\right) }\in \left\{ -1;1\right\}$. Let $\left( \mathbf{P}^{(s)}\right)
^{\ast }$ be the polar matrix of $\mathbf{P}^{(s)}$. Let $\mathbf{X}$ be a
triplet of rationally independent real numbers.

If the sequence $\left( \mathbf{P}^{(s)}\right) $ has the max-Dirichlet
Property for the plane $\mathbb{P}=\mathbf{X}^{\perp }$ then the sequence $%
\left( D.\left( \mathbf{P}^{(s)}\right) ^{\ast }\right) $ has the
max-Dirichlet Property for the line $\mathbb{D}=\mathbb{R}\mathbf{X}$.
\end{lemma}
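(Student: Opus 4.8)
The plan is to write the columns of $D\left(\mathbf{P}^{(s)}\right)^{\ast}$ as cross products of the columns of $\mathbf{P}^{(s)}$, split those cross products into their $\mathbb{P}$- and $\mathbb{D}$-components, and then push the hypothesis on $\mathbf{P}^{(s)}$ through. Write $\mathbf{P}^{(s)}=\left(\mathbf{p}_{0}^{(s)},\mathbf{p}_{1}^{(s)},\mathbf{p}_{2}^{(s)}\right)$ and $\det\mathbf{P}^{(s)}=\varepsilon^{(s)}D$ with $\varepsilon^{(s)}\in\{-1,1\}$. Exactly as in the computation of $\mathbf{b}_{i}=\varepsilon\left(\mathbf{g}_{j}\wedge\mathbf{g}_{k}\right)$ in the proof of part c) of Theorem 2 (which is the case $D=1$), the $i$-th column $\mathbf{q}_{i}^{(s)}$ of $\left(\mathbf{P}^{(s)}\right)^{\ast}$ satisfies $D\,\mathbf{q}_{i}^{(s)}=\varepsilon^{(s)}\left(\mathbf{p}_{j}^{(s)}\wedge\mathbf{p}_{k}^{(s)}\right)$ whenever $(i,j,k)$ is a direct circular permutation of $(0,1,2)$. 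In particular $D\left(\mathbf{P}^{(s)}\right)^{\ast}$ is an integer matrix, so the max-Dirichlet property is meaningful for it.

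Next I would use that $\mathbb{D}=\mathbb{P}^{\perp}$ with $\dim\mathbb{P}=2$: a cross product of two vectors of $\mathbb{P}$ lies in $\mathbb{D}$, a cross product of two vectors of $\mathbb{D}$ vanishes, and a cross product of a vector of $\mathbb{P}$ with a vector of $\mathbb{D}$ lies in $\mathbb{P}$. Writing $\mathbf{p}_{\ell}^{(s)}=\mathbf{p}_{\ell}^{\prime (s)}+\mathbf{p}_{\ell}^{\prime\prime (s)}$ and expanding the cross product (just as is done for $\mathbf{b}_{i}^{\prime}$ and $\mathbf{b}_{i}^{\prime\prime}$ in the proof of part c) of Theorem 2), this gives, omitting the index $^{(s)}$,
\[
\left(D\mathbf{q}_{i}\right)^{\prime\prime}=\varepsilon\,\mathbf{p}_{j}^{\prime}\wedge\mathbf{p}_{k}^{\prime},\qquad
\left(D\mathbf{q}_{i}\right)^{\prime}=\varepsilon\left(\mathbf{p}_{j}^{\prime}\wedge\mathbf{p}_{k}^{\prime\prime}+\mathbf{p}_{j}^{\prime\prime}\wedge\mathbf{p}_{k}^{\prime}\right).
\]
Setting $\pi^{\prime (s)}=\underset{\ell=0,1,2}{\max}\left\Vert\mathbf{p}_{\ell}^{\prime (s)}\right\Vert$ and $\pi^{\prime\prime (s)}=\underset{\ell=0,1,2}{\max}\left\Vert\mathbf{p}_{\ell}^{\prime\prime (s)}\right\Vert$, and using $\left\Vert\mathbf{a}\wedge\mathbf{b}\right\Vert\leq\left\Vert\mathbf{a}\right\Vert\left\Vert\mathbf{b}\right\Vert$, I obtain for every $i$
\[
\left\Vert\left(D\mathbf{q}_{i}^{(s)}\right)^{\prime\prime}\right\Vert\leq\left(\pi^{\prime (s)}\right)^{2},\qquad
\left\Vert\left(D\mathbf{q}_{i}^{(s)}\right)^{\prime}\right\Vert\leq 2\,\pi^{\prime (s)}\pi^{\prime\prime (s)}.
\]

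Finally, by hypothesis there is an infinite set $S$ with $C:=\underset{s\in S}{\sup}\left[\left(\pi^{\prime (s)}\right)^{2}\pi^{\prime\prime (s)}\right]<+\infty$ and $\underset{s\rightarrow+\infty,\ s\in S}{\lim}\pi^{\prime\prime (s)}=0$. For $s\in S$ the two bounds above give
\[
\left(\underset{i=0,1,2}{\max}\left\Vert\left(D\mathbf{q}_{i}^{(s)}\right)^{\prime}\right\Vert\right)^{2}\left(\underset{i=0,1,2}{\max}\left\Vert\left(D\mathbf{q}_{i}^{(s)}\right)^{\prime\prime}\right\Vert\right)\leq 4\left(\pi^{\prime (s)}\pi^{\prime\prime (s)}\right)^{2}\left(\pi^{\prime (s)}\right)^{2}=4\left(\left(\pi^{\prime (s)}\right)^{2}\pi^{\prime\prime (s)}\right)^{2}\leq 4C^{2}
\]
and
\[
\underset{i=0,1,2}{\max}\left\Vert\left(D\mathbf{q}_{i}^{(s)}\right)^{\prime}\right\Vert\leq 2\,\pi^{\prime (s)}\pi^{\prime\prime (s)}=2\sqrt{\left(\pi^{\prime (s)}\right)^{2}\pi^{\prime\prime (s)}}\cdot\sqrt{\pi^{\prime\prime (s)}}\leq 2\sqrt{C}\,\sqrt{\pi^{\prime\prime (s)}},
\]
so $\underset{s\rightarrow+\infty,\ s\in S}{\lim}\ \underset{i=0,1,2}{\max}\left\Vert\left(D\mathbf{q}_{i}^{(s)}\right)^{\prime}\right\Vert=0$. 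These are exactly the two requirements of the max-Dirichlet property of $\left(D\left(\mathbf{P}^{(s)}\right)^{\ast}\right)$ concerning $\mathbb{D}$, with the same set $S$, which proves the Lemma. The computations are all elementary; the one point requiring attention is the last display, since plain multiplication of the two bounds only yields boundedness of $\underset{i}{\max}\left\Vert\left(D\mathbf{q}_{i}^{(s)}\right)^{\prime}\right\Vert$: to get convergence to $0$ one must factor $\pi^{\prime (s)}\pi^{\prime\prime (s)}=\sqrt{\left(\pi^{\prime (s)}\right)^{2}\pi^{\prime\prime (s)}}\cdot\sqrt{\pi^{\prime\prime (s)}}$ and use that the first factor stays bounded while the second tends to $0$. (The lower bound $\left\Vert\mathbf{p}_{\ell}^{(s)}\right\Vert\geq 1$ for nonzero integer vectors is not needed here; it would only be used to also deduce $\pi^{\prime (s)}\rightarrow+\infty$.)
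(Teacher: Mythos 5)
Your proof is correct and follows the paper's approach in its main line: you write the columns of $D\left(\mathbf{P}^{(s)}\right)^{\ast}$ as cross products $\varepsilon\,\mathbf{p}_{j}\wedge\mathbf{p}_{k}$, decompose them into $\mathbb{P}$- and $\mathbb{D}$-components using $\mathbf{p}_{\ell}=\mathbf{p}_{\ell}^{\prime}+\mathbf{p}_{\ell}^{\prime\prime}$, and bound the resulting norms by $\pi^{\prime}$ and $\pi^{\prime\prime}$ to get $\left(\max\left\Vert\left(D\mathbf{q}_{i}\right)^{\prime}\right\Vert\right)^{2}\left(\max\left\Vert\left(D\mathbf{q}_{i}\right)^{\prime\prime}\right\Vert\right)\leq 4\left(\left(\pi^{\prime}\right)^{2}\pi^{\prime\prime}\right)^{2}$. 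Up to notation ($\pi$ versus $\mu$), this is identical to the paper's computation.

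The one place you diverge is in the final limit argument, and your route is genuinely cleaner. The paper deduces $\max_{i}\left\Vert\left(D\mathbf{q}_{i}^{(s)}\right)^{\prime}\right\Vert\rightarrow 0$ by first proving $\pi^{\prime(s)}\rightarrow+\infty$ (via the observation that a bounded infinite subfamily of the integer vectors $\mathbf{p}_{i}^{(s)}$ would be finite, hence would have a nonzero minimum of $\left\Vert\mathbf{p}_{i}^{\prime\prime(s)}\right\Vert$, contradicting $\pi^{\prime\prime(s)}\rightarrow 0$), and then writing $\pi^{\prime}\pi^{\prime\prime}=\left(\left(\pi^{\prime}\right)^{2}\pi^{\prime\prime}\right)/\pi^{\prime}\leq L/\pi^{\prime}\rightarrow 0$. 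You instead factor $\pi^{\prime}\pi^{\prime\prime}=\sqrt{\left(\pi^{\prime}\right)^{2}\pi^{\prime\prime}}\cdot\sqrt{\pi^{\prime\prime}}\leq\sqrt{C}\sqrt{\pi^{\prime\prime}}\rightarrow 0$, which uses only the two hypotheses directly and entirely sidesteps the need for $\pi^{\prime(s)}\rightarrow+\infty$ and the attendant pigeonhole argument on integer points. You correctly flag this yourself at the end. Both arguments are valid; yours is shorter and uses weaker hypotheses (it does not rely on the columns being integer vectors), which is a modest but real simplification over the paper.
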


\begin{proposition}
Let $\theta $ be a real root of a third degree irreducible polynomial $%
P\left( t\right) =t^{3}-mt-n$, with $m$ and $n$\ \emph{rationals.} Let the
vector $\mathbf{\Theta }$ be $\mathbf{\Theta =~}^{\textsc{T}}\left(
1,\theta ,\theta ^{2}\right) $. Let $\mathbf{R}$ be any\emph{\ rational}
matrix, with $\det \left( \mathbf{R}\right) \neq 0$, such that $C\mathbf{R}$
has integral coefficients, with $C$ an integer. Let $\mathbf{X}$\ be: $%
\mathbf{X}=\mathbf{R\Theta }$ = $\mathbf{R\ }^{\textsc{T}}\left( 1,\theta
,\theta ^{2}\right) $. Let's suppose that: $\mathbf{X=~}^{\textsc{T}}\left( x_{0},x_{1},x_{2}\right)$, with $0<x_{0}<x_{1}<x_{2}$. Let $\left( \mathbf{G}^{(s)}\right) $ be the sequence
of integer matrices generated by the Smallest Vector Algorithm with initial
value $\mathbf{X}=\mathbf{R\Theta }$.

Then the sequence $\left( \mathbf{G}^{(s)}\right) $ has the max-Dirichlet
Property for the approximation of $\mathbb{P=}\mathbf{X}^{\perp }$ and $%
\left( C.^{\textsc{T}}\mathbf{RG}^{(s)}\right) $ has the max-Dirichlet
Property for the approximation of $\mathbf{\Pi }=\mathbf{\Theta }^{\perp }$. Moreover, there exists an integer $A$ such that the matrices $\mathbf{A}%
^{(s)}=A\mathbf{R}^{-1}\left( \mathbf{G}^{(s)}\right) ^{\ast }$ are integer,
and such that the sequence $\left( \mathbf{A}^{(s)}\right) $ has also the
max-Dirichlet Property for the approximation of $\mathbf{\Delta }=\mathbb{R}%
\mathbf{\Theta }$.
\end{proposition}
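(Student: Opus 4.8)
The plan is to deduce the three assertions from Theorem 2 and from the Polarity and Dirichlet Property Lemma, the only genuinely arithmetic input being that the plane $\mathbf{X}^{\perp}$ is badly approximable. First I would record that the coordinates of $\mathbf{X}=\mathbf{R}\mathbf{\Theta}$ are rationally independent: a relation ${}^{\textsc{T}}\mathbf{c}\,\mathbf{X}=0$ with $\mathbf{c}\in\mathbb{Q}^{3}$ rewrites as ${}^{\textsc{T}}({}^{\textsc{T}}\mathbf{R}\,\mathbf{c})\,\mathbf{\Theta}=0$, and since $1,\theta,\theta^{2}$ are rationally independent and $\mathbf{R}$ is invertible this forces $\mathbf{c}=\mathbf{0}$; with $0<x_{0}<x_{1}<x_{2}$ the Smallest Vector Algorithm is therefore defined on $\mathbf{X}$. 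To verify that $\mathbb{P}=\mathbf{X}^{\perp}$ is badly approximable, for $\mathbf{h}\in\mathbb{Z}^{3}\setminus\{\mathbf{0}\}$ I would set $\mathbf{k}=C\,{}^{\textsc{T}}\mathbf{R}\,\mathbf{h}$, a nonzero integer vector (because $C\mathbf{R}$ has integer entries and $\mathbf{R}$ is invertible), and observe that $C\,(\mathbf{X}\bullet\mathbf{h})=\mathbf{k}\bullet\mathbf{\Theta}=k_{0}+k_{1}\theta+k_{2}\theta^{2}$. The classical norm-form lower bound in the real cubic field $\mathbb{Q}(\theta)$ (the estimate already used in Section 2, see \cite{CasselsDA}) provides $e>0$ with $|k_{0}+k_{1}\theta+k_{2}\theta^{2}|\,(\max_{i}|k_{i}|)^{2}\geq e$; since $\max_{i}|k_{i}|\leq C\,\Vert{}^{\textsc{T}}\mathbf{R}\Vert\,\Vert\mathbf{h}\Vert$, this yields $|\mathbf{X}\bullet\mathbf{h}|\,\Vert\mathbf{h}\Vert^{2}\geq d>0$, which is assertion (A) of the Transference Theorem Lemmas. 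Hence $(\mathbb{P},\mathbb{D})$ is badly approximable, so parts b) and c) of Theorem 2 hold for the infinite set $S$ furnished by part a); and part b) is exactly the statement that $(\mathbf{G}^{(s)})$ has the max-Dirichlet Property for $\mathbb{P}=\mathbf{X}^{\perp}$.

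For the second assertion I would set $\mathbf{P}^{(s)}=C\,{}^{\textsc{T}}\mathbf{R}\,\mathbf{G}^{(s)}$, an integer matrix whose columns $\mathbf{p}_{i}^{(s)}=C\,{}^{\textsc{T}}\mathbf{R}\,\mathbf{g}_{i}^{(s)}$ satisfy $\mathbf{p}_{i}^{(s)}\bullet\mathbf{\Theta}=C\,(\mathbf{g}_{i}^{(s)}\bullet\mathbf{X})$. Projecting relative to the pair $(\mathbf{\Pi},\mathbf{\Delta})=(\mathbf{\Theta}^{\perp},\mathbb{R}\mathbf{\Theta})$ gives $\Vert\mathbf{p}_{i}^{\prime\prime(s)}\Vert=(C\Vert\mathbf{X}\Vert/\Vert\mathbf{\Theta}\Vert)\,\Vert\mathbf{g}_{i}^{\prime\prime(s)}\Vert$, so $\max_{i}\Vert\mathbf{p}_{i}^{\prime\prime(s)}\Vert$ is a fixed multiple of $\max_{i}\Vert\mathbf{g}_{i}^{\prime\prime(s)}\Vert$, which tends to $0$ on $S$ by part b) of Theorem 2. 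On the other hand $\Vert\mathbf{p}_{i}^{\prime(s)}\Vert\leq\Vert\mathbf{p}_{i}^{(s)}\Vert\leq C\,\Vert{}^{\textsc{T}}\mathbf{R}\Vert\,\Vert\mathbf{g}_{i}^{(s)}\Vert\leq C\,\Vert{}^{\textsc{T}}\mathbf{R}\Vert\,(\max_{j}\Vert\mathbf{g}_{j}^{\prime(s)}\Vert+\max_{j}\Vert\mathbf{g}_{j}^{\prime\prime(s)}\Vert)$, which on $S$ is at most $2C\,\Vert{}^{\textsc{T}}\mathbf{R}\Vert\,\max_{j}\Vert\mathbf{g}_{j}^{\prime(s)}\Vert$ once $s$ is large (the second term tends to $0$, the first to $+\infty$ by Lemma 2 of Section 2). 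Multiplying these two bounds, $(\max_{i}\Vert\mathbf{p}_{i}^{\prime(s)}\Vert)^{2}(\max_{i}\Vert\mathbf{p}_{i}^{\prime\prime(s)}\Vert)$ is bounded on $S$ by a constant times $(\max_{j}\Vert\mathbf{g}_{j}^{\prime(s)}\Vert)^{2}(\max_{j}\Vert\mathbf{g}_{j}^{\prime\prime(s)}\Vert)$, which part b) of Theorem 2 bounds; hence $(C\,{}^{\textsc{T}}\mathbf{R}\,\mathbf{G}^{(s)})$ has the max-Dirichlet Property for $\mathbf{\Pi}=\mathbf{\Theta}^{\perp}$.

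For the last assertion I would apply the Polarity and Dirichlet Property Lemma. We may assume $C>0$. Since $\det\mathbf{G}^{(s)}=\pm1$, one has $\det\mathbf{P}^{(s)}=\pm C^{3}\det\mathbf{R}$, so $D:=|C^{3}\det\mathbf{R}|$ is a positive integer independent of $s$ (being the absolute value of the determinant of an integer matrix), and $(\mathbf{P}^{(s)})$ is a sequence of integer matrices of determinant $\pm D$. Applying that Lemma with the rationally independent triplet $\mathbf{\Theta}$ in the role of $\mathbf{X}$, the second assertion shows that $(D\,(\mathbf{P}^{(s)})^{\ast})$ has the max-Dirichlet Property for the line $\mathbf{\Delta}=\mathbb{R}\mathbf{\Theta}$. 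A short computation gives $(\mathbf{P}^{(s)})^{\ast}=({}^{\textsc{T}}\mathbf{P}^{(s)})^{-1}=\tfrac{1}{C}\mathbf{R}^{-1}(\mathbf{G}^{(s)})^{\ast}$. I would then pick a positive integer $A$ with $A\mathbf{R}^{-1}$ integral (such $A$ exists since $\mathbf{R}^{-1}$ is rational; e.g. $A=D$ works, because $D\mathbf{R}^{-1}=\pm C\,\mathrm{adj}(C\mathbf{R})$). With this choice $\mathbf{A}^{(s)}=A\mathbf{R}^{-1}(\mathbf{G}^{(s)})^{\ast}=AC\,(\mathbf{P}^{(s)})^{\ast}=\tfrac{AC}{D}\bigl(D\,(\mathbf{P}^{(s)})^{\ast}\bigr)$ is an integer matrix, and it is a fixed positive multiple of $D\,(\mathbf{P}^{(s)})^{\ast}$. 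Since every norm entering the definition of the max-Dirichlet Property is positively homogeneous of degree one, the property is invariant under multiplication by a fixed positive scalar, so $(\mathbf{A}^{(s)})$ has the max-Dirichlet Property for $\mathbf{\Delta}=\mathbb{R}\mathbf{\Theta}$, which completes the proof.

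I expect the only genuine obstacle to be the first paragraph, namely the bad approximability of $\mathbf{X}^{\perp}$; everything afterwards is linear algebra (transposes, polars, determinants) combined with the scale- and polarity-invariance of the Dirichlet estimates already at hand. The content of that step is the cubic inequality $|k_{0}+k_{1}\theta+k_{2}\theta^{2}|\,(\max_{i}|k_{i}|)^{2}\gg 1$, which holds because $N_{\mathbb{Q}(\theta)/\mathbb{Q}}(k_{0}+k_{1}\theta+k_{2}\theta^{2})$ is a nonzero rational whose denominator is bounded independently of $\mathbf{k}$, while its Galois conjugates are $O(\max_{i}|k_{i}|)$ — the same input invoked in Section 2, now used after the substitution $\mathbf{k}=C\,{}^{\textsc{T}}\mathbf{R}\,\mathbf{h}$.
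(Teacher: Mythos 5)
Your proof is correct and follows essentially the same route as the paper: establish that $\mathbf{X}^{\perp}$ is badly approximable by transferring the cubic norm-form bound through $C\,{}^{\textsc{T}}\mathbf{R}$, invoke part b) of Theorem~2 for the first assertion, conjugate by $C\,{}^{\textsc{T}}\mathbf{R}$ for the second, and apply the Polarity and Dirichlet Property Lemma for the third. The only cosmetic difference is that you argue bad approximability directly rather than by contradiction, and you track the scalar factor $D/C$ versus $A$ more carefully; both proofs rest on the same observation that the max-Dirichlet estimates are invariant under multiplication by a fixed positive scalar.
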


\begin{proposition}
Let $\theta $ and $\mathbf{\Theta }$ be like in the previous Proposition.
Let $\mathbf{\Delta }$ be: $\mathbf{\Delta }=\mathbb{R}\mathbf{\Theta }$.
Let $\left( \mathbf{A}^{(s)}\right) $ be any sequence of integer matrices
having the max-Dirichlet Property for $\mathbf{\Delta }$, and all having the
same determinant $D>0$, up to the sign. Let $\left( \mathbf{J}^{(s)}\right) $
be the polar matrices of the $\left( \mathbf{A}^{(s)}\right) $. Then there
exists a sequence of \emph{rational} matrices $\left( \mathbf{M}%
^{(s)}\right) $ and an integer $Q$, which depends only on $m$ and $n$, such
that:

\begin{itemize}
\item For each $s$, $\mathbf{\Theta }$ is an eigenvector for $\mathbf{M}%
^{(s)}$;

\item $\underset{s\rightarrow +\infty }{\lim \inf }\left( \left\Vert DQ\ ^{%
\textsc{T}}\mathbf{M}^{(s)}\mathbf{J}^{(s)}\right\Vert \right) <+\infty $,
the matrices $\left( DQ\ ^{\textsc{T}}\mathbf{M}^{(s)}\mathbf{J}%
^{(s)}\right) $ having integral coefficients.
\end{itemize}
\end{proposition}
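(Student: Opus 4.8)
The plan is to construct the matrices $\mathbf{M}^{(s)}$ explicitly, in exactly the same spirit as the Notation preceding the Main Lemma for Lagrange in Section 2, where the matrix $\mathbf{M}_{\mathbf{b}}$ was built from a column vector $\mathbf{b}$ so as to have $\mathbf{\Theta}$ as eigenvector. Here, since $P(t)=t^3-mt-n$, multiplication by $\theta$ on $\mathbb{Q}[\theta]$ in the basis $(1,\theta,\theta^2)$ is given by the companion matrix $\mathbf{Q}=\begin{pmatrix}0&0&n\\1&0&m\\0&1&0\end{pmatrix}$, which satisfies $\mathbf{Q}\mathbf{\Theta}=\theta\mathbf{\Theta}$. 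More generally, for any triple of rationals $(u_0,u_1,u_2)$ the ``multiplication by $u_0+u_1\theta+u_2\theta^2$'' matrix $\mathbf{M}[u_0,u_1,u_2]:=u_0\mathbf{I}+u_1\mathbf{Q}+u_2\mathbf{Q}^2$ has $\mathbf{\Theta}$ as eigenvector with eigenvalue $u_0+u_1\theta+u_2\theta^2$, and its entries are $\mathbb{Z}$-linear in $(u_0,u_1,u_2)$ with denominators bounded by a fixed integer $Q$ depending only on $m,n$ (the powers $\mathbf{Q}^k$ for $k\le 2$ have rational entries in a fixed denominator once $m,n$ are written with common denominator $Q$).

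The key steps, in order, are as follows. First, I would apply the hypothesis: the sequence $(\mathbf{A}^{(s)})$ has the max-Dirichlet property for $\mathbf{\Delta}=\mathbb{R}\mathbf{\Theta}$, meaning (writing $\mathbf{a}_i^{(s)}$ for its columns, $'$ for projection on $\mathbf{\Delta}$ and $''$ for projection on $\mathbf{\Pi}=\mathbf{\Theta}^\perp$) that along an infinite set $S$ one has $(\max_i\|\mathbf{a}_i^{(s)\prime\prime}\|)^2(\max_i\|\mathbf{a}_i^{(s)\prime}\|)$ bounded with $\max_i\|\mathbf{a}_i^{(s)\prime}\|\to 0$ — wait, rather: for $\mathbf{\Delta}$ the roles are that $\max_i\|\mathbf{a}_i^{(s)\prime}\|$ (the component along $\mathbf{\Delta}$) stays large while the transverse part shrinks; precisely $(\max_i\|\mathbf{a}_i^{(s)\prime}\|)^2(\max_i\|\mathbf{a}_i^{(s)\prime\prime}\|)$ is bounded and $\max_i\|\mathbf{a}_i^{(s)\prime\prime}\|\to 0$. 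Second, for each $s$ I would choose one column, say $\mathbf{a}_0^{(s)}=\,^{\textsc{T}}(a_0,a_1,a_2)$, decompose it as $\mathbf{a}_0^{(s)}=\mathbf{a}_0^{(s)\prime}+\mathbf{a}_0^{(s)\prime\prime}$, and set $\mathbf{M}^{(s)}:=\mathbf{M}[a_0,a_1,a_2]=a_0\mathbf{I}+a_1\mathbf{Q}+a_2\mathbf{Q}^2$, so $\mathbf{\Theta}$ is an eigenvector with eigenvalue $\lambda^{(s)}=a_0+a_1\theta+a_2\theta^2$ (note $\lambda^{(s)}\mathbf{\Theta}/\|\mathbf{\Theta}\| = \mathbf{M}^{(s)}(\mathbf{\Theta}/\|\mathbf{\Theta}\|)$, and $|\lambda^{(s)}|=\|\mathbf{a}_0^{(s)\prime}\|\cdot\|\mathbf{\Theta}\|/\|\mathbf{\Theta}\|$... more precisely $\mathbf{a}_0^{(s)}\bullet\mathbf{\Theta}=\lambda^{(s)}$ up to the usual normalisation). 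Third, I would estimate the entries of $\,^{\textsc{T}}\mathbf{M}^{(s)}\mathbf{J}^{(s)}$, whose $(i,j)$ entry is $\mathbf{m}_i^{(s)}\bullet\mathbf{j}_j^{(s)}$ where $\mathbf{m}_i^{(s)}$ are the columns of $\mathbf{M}^{(s)}$ and $\mathbf{j}_j^{(s)}$ those of $\mathbf{J}^{(s)}=D(\mathbf{A}^{(s)})^\ast$: splitting each into its $'$ and $''$ parts, $\mathbf{m}_i^{(s)}\bullet\mathbf{j}_j^{(s)}=\mathbf{m}_i^{(s)\prime}\bullet\mathbf{j}_j^{(s)\prime}+\mathbf{m}_i^{(s)\prime\prime}\bullet\mathbf{j}_j^{(s)\prime\prime}$, and bounding each piece via the Dirichlet estimates for $\mathbf{A}^{(s)}$ (hence, by polarity as in Lemma~1 of this section, for $\mathbf{J}^{(s)}$) together with the fact that $\mathbf{M}^{(s)}$ maps the line $\mathbf{\Delta}$ into itself (so $\mathbf{m}_i^{(s)\prime\prime}$ is controlled by $\|\mathbf{a}_0^{(s)\prime\prime}\|$, just as in the inequalities (\ref{InegPrim}) and (\ref{Ineq Sec}) of Section~2). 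The exponents then balance exactly — a product of type $(\text{small})^2\cdot(\text{large})$ against $(\text{large})^{?}\cdot(\text{small})^{?}$ — to give a bounded quantity along $S$, whence $\liminf_{s\to+\infty}\|DQ\,^{\textsc{T}}\mathbf{M}^{(s)}\mathbf{J}^{(s)}\|<+\infty$. Finally, integrality of $DQ\,^{\textsc{T}}\mathbf{M}^{(s)}\mathbf{J}^{(s)}$ follows because $Q\mathbf{M}^{(s)}$ is an integer matrix (by choice of $Q$) and $D(\mathbf{A}^{(s)})^\ast=\mathbf{J}^{(s)}$ is an integer matrix (cofactor matrix of the integer matrix $\mathbf{A}^{(s)}$).

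The main obstacle I expect is the bookkeeping in the third step: verifying that the exponents really match, i.e. that the growth of $\|\mathbf{M}^{(s)}\|$ (which is governed by $\|\mathbf{a}_0^{(s)}\|\approx\|\mathbf{a}_0^{(s)\prime}\|$, a \emph{large} quantity) is compensated by the decay of the transverse parts of $\mathbf{J}^{(s)}$, and symmetrically that the small transverse part $\mathbf{m}_i^{(s)\prime\prime}$ compensates the large longitudinal part $\mathbf{j}_j^{(s)\prime}$. Concretely, $\mathbf{m}_i^{(s)\prime}\bullet\mathbf{j}_j^{(s)\prime}$ is bounded by (const)$\cdot\|\mathbf{a}_0^{(s)\prime}\|\cdot\|\mathbf{j}_j^{(s)\prime}\|$ while $\mathbf{j}_j^{(s)\prime}=D\,\mathbf{a}_k^{(s)\prime\prime}\wedge\mathbf{a}_l^{(s)\prime\prime}$ (a product of two small transverse vectors), and $\mathbf{m}_i^{(s)\prime\prime}\bullet\mathbf{j}_j^{(s)\prime\prime}$ is bounded by (const)$\cdot\|\mathbf{a}_0^{(s)\prime\prime}\|\cdot\|\mathbf{j}_j^{(s)\prime\prime}\|$ with $\mathbf{j}_j^{(s)\prime\prime}=\pm D(\mathbf{a}_k^{(s)\prime\prime}\wedge\mathbf{a}_l^{(s)\prime}+\mathbf{a}_k^{(s)\prime}\wedge\mathbf{a}_l^{(s)\prime\prime})$; in both cases the homogeneity degrees in the ``small'' and ``large'' quantities add up to match the bounded Dirichlet product $(\max\|\mathbf{a}_i^{(s)\prime}\|)^2(\max\|\mathbf{a}_i^{(s)\prime\prime}\|)\le$ const, so the argument closes. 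This is routine but requires care with the two-versus-one occurrences of the longitudinal factor; everything else (the construction of $\mathbf{M}^{(s)}$, the eigenvector property, the integrality) is immediate from the companion-matrix formalism and the cofactor description of polar matrices already used in the paper.
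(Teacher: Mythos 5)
Your overall plan matches the paper's: pick one column $\mathbf{a}_0^{(s)}$ of $\mathbf{A}^{(s)}$, build a rational matrix $\mathbf{M}^{(s)}$ from its coordinates with $\mathbf{\Theta}$ as eigenvector, then bound the entries of $^{\textsc{T}}\mathbf{M}^{(s)}\mathbf{J}^{(s)}$ by splitting each dot product into its $\mathbf{\Pi}$- and $\mathbf{\Delta}$-components. The difficulty is that the specific matrix you wrote, $\mathbf{M}^{(s)} = a_0\mathbf{I}+a_1\mathbf{Q}+a_2\mathbf{Q}^2$, is not the right one, and this is not a bookkeeping slip but the crux of the construction.

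The paper's $\mathbf{M}^{(s)}$ has columns $\mathbf{Q}^{\#\#}\mathbf{a}_0^{(s)}$, $\mathbf{Q}^{\#}\mathbf{a}_0^{(s)}$, $\mathbf{a}_0^{(s)}$, where $\mathbf{Q}^{\#}$ and $\mathbf{Q}^{\#\#}=(\mathbf{Q}^{\#})^2-m\mathbf{I}$ are fixed matrices that preserve the line $\mathbf{\Delta}$. This is what makes the estimate close: since $\mathbf{a}_0^{(s)}$ is close to $\mathbf{\Delta}$ and the $\mathbf{Q}$'s preserve $\mathbf{\Delta}$, each column of $\mathbf{M}^{(s)}$ has its projection on $\mathbf{\Pi}$ bounded by $\mathrm{const}\cdot\mu'$ (the small quantity), which is precisely the content of (\ref{InegPrim5}) and (\ref{Ineq Sec5}). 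In polynomial form this matrix is $(a_z-m a_x)\mathbf{I}+a_y\mathbf{Q}^{\#}+a_x(\mathbf{Q}^{\#})^2$; the coefficients are permuted relative to yours and carry the $-m a_x$ correction. Your $a_0\mathbf{I}+a_1\mathbf{Q}^{\#}+a_2(\mathbf{Q}^{\#})^2$ does still have $\mathbf{\Theta}$ as eigenvector, but its columns are no longer $\mathbf{\Delta}$-preserving images of $\mathbf{a}_0^{(s)}$. For instance its first column equals $\begin{pmatrix}1&0&0\\0&0&n\\0&n&0\end{pmatrix}\mathbf{a}_0^{(s)}$, and that fixed matrix sends $\mathbf{\Theta}$ to $(1,n\theta^2,n\theta)^{\textsc{T}}$, which is not a multiple of $\mathbf{\Theta}$. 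So the columns of your $\mathbf{M}$ may have $\mathbf{\Pi}$-projection of order $\mu''$ (the large quantity), not $\mu'$; the first summand of $\mathbf{m}_i\bullet\mathbf{j}_k$ then behaves like $\mu''\cdot\mu'\mu''=(\mu'')^2\mu'$, which is not controlled by the Dirichlet bound $(\mu')^2\mu''<L$, and the estimate does not close. In short: the eigenvector property of $\mathbf{M}^{(s)}$ is necessary but not sufficient; you also need the columns of $\mathbf{M}^{(s)}$ to be $\mathbf{\Delta}$-preserving transforms of $\mathbf{a}_0^{(s)}$, which forces the particular choice of polynomial coefficients.

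Two smaller slips, probably typos but worth flagging since they interact with the main point: your companion matrix is the transpose of the one that satisfies $\mathbf{Q}\mathbf{\Theta}=\theta\mathbf{\Theta}$ (the correct one is $\mathbf{Q}^{\#}=\begin{pmatrix}0&1&0\\0&0&1\\n&m&0\end{pmatrix}$); and in your statement of the $\mathbf{\Delta}$-Dirichlet hypothesis the $'$/$''$ roles are swapped relative to the paper, which has the effect of writing the $\mathbf{\Pi}$-exponent $(\text{large})^2(\text{small})$ where the $\mathbf{\Delta}$-exponent $(\text{small})^2(\text{large})$ is needed. These are worth straightening out before attempting the final balancing, since getting the exponents to match is exactly where the argument succeeds or fails.
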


\begin{proposition}
Let $\theta $ be, like in the two previous propositions, a real root of a
third degree irreducible polynomial $P\left( t\right) =t^{3}-mt-n$, with $m$
and $n$\ \emph{rationals.} Let $\mathbf{X}=\ ^{\textsc{T}}\left(
x_{0},x_{1},x_{2}\right) $ be a free triplet of three positive real numbers from the
ring $\mathbb{Q}\left[ \theta \right] $. Then the Smallest Vector Algorithm
applied on $\mathbf{X}$ makes a loop: there exist integers $s$ and $t$, $%
s\neq t$, and a real number $\lambda $ such that:

$\left( \left\Vert \mathbf{g}^{\prime \prime }{}_{0}^{(s)}\right\Vert
,\left\Vert \mathbf{g}^{\prime \prime }{}_{1}^{(s)}\right\Vert ,\left\Vert 
\mathbf{g}^{\prime \prime }{}_{2}^{(s)}\right\Vert \right) =\lambda \left(
\left\Vert \mathbf{g}^{\prime \prime }{}_{0}^{(t)}\right\Vert ,\left\Vert 
\mathbf{g}^{\prime \prime }{}_{1}^{(t)}\right\Vert ,\left\Vert \mathbf{g}%
^{\prime \prime }{}_{2}^{(t)}\right\Vert \right) $.
\end{proposition}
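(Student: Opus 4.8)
The plan is to reduce this statement to the two Propositions that immediately precede it, which contain all the analytic substance; what remains is polar‑matrix bookkeeping together with a pigeonhole argument, entirely parallel to the special case $\mathbf{X}={}^{\textsc{T}}(1,\theta,\theta^{2})$ already treated in Section~2. First I would fix a rational matrix $\mathbf{R}$ with $\mathbf{X}=\mathbf{R}\mathbf{\Theta}$, where $\mathbf{\Theta}={}^{\textsc{T}}(1,\theta,\theta^{2})$: since each $x_{i}$ lies in $\mathbb{Q}[\theta]$ one has $x_{i}=r_{i0}+r_{i1}\theta+r_{i2}\theta^{2}$ with $r_{ij}\in\mathbb{Q}$, and since $x_{0},x_{1},x_{2}$ are rationally independent they form a $\mathbb{Q}$‑basis of the cubic field $\mathbb{Q}[\theta]$, so $\mathbf{R}=(r_{ij})$ is invertible; then $C\mathbf{R}$ is integral for a suitable integer $C$. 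Thus $\mathbf{X}$ has exactly the shape required by the first of the two preceding Propositions, which therefore supplies an integer $A$ such that $\mathbf{A}^{(s)}:=A\mathbf{R}^{-1}(\mathbf{G}^{(s)})^{\ast}=A\mathbf{R}^{-1}\mathbf{B}^{(s)}$ is integral and such that $(\mathbf{A}^{(s)})$ has the max‑Dirichlet property for $\mathbf{\Delta}=\mathbb{R}\mathbf{\Theta}$.

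Because $\det\mathbf{B}^{(s)}=\pm1$ for every $s$, all the $\mathbf{A}^{(s)}$ have the same absolute determinant $D:=|A^{3}/\det\mathbf{R}|$, which is a positive integer (it is the absolute value of the determinant of an integer matrix, and it is nonzero because $\mathbf{R}$ and $\mathbf{B}^{(s)}$ are invertible). Hence $(\mathbf{A}^{(s)})$ satisfies the hypotheses of the second preceding Proposition, which I would then apply: it produces rational matrices $\mathbf{M}^{(s)}$ having $\mathbf{\Theta}$ as an eigenvector, say $\mathbf{M}^{(s)}\mathbf{\Theta}=\mu_{s}\mathbf{\Theta}$ with $\mu_{s}\in\mathbb{Q}[\theta]$, an integer $Q$, and — writing $\mathbf{J}^{(s)}=(\mathbf{A}^{(s)})^{\ast}$ for the polar of $\mathbf{A}^{(s)}$ — the property $\liminf_{s}\|DQ\,{}^{\textsc{T}}\mathbf{M}^{(s)}\mathbf{J}^{(s)}\|<+\infty$, the matrices $DQ\,{}^{\textsc{T}}\mathbf{M}^{(s)}\mathbf{J}^{(s)}$ being integral. (As in the Notation of Section~2, the construction keeps $\mathbf{M}^{(s)}$ a nonzero rational matrix, so $\mu_{s}\neq0$ by rational independence of $1,\theta,\theta^{2}$.) Along an infinite set $S'$ these integral matrices are uniformly bounded, hence take only finitely many values, so I may choose $s\neq t$ in $S'$ with ${}^{\textsc{T}}\mathbf{M}^{(s)}\mathbf{J}^{(s)}={}^{\textsc{T}}\mathbf{M}^{(t)}\mathbf{J}^{(t)}$.

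The last step is to unwind the polarities. From $(\mathbf{P}\mathbf{Q})^{\ast}=\mathbf{P}^{\ast}\mathbf{Q}^{\ast}$, $(\alpha\mathbf{P})^{\ast}=\alpha^{-1}\mathbf{P}^{\ast}$, $(\mathbf{R}^{-1})^{\ast}={}^{\textsc{T}}\mathbf{R}$ and $(\mathbf{B}^{(s)})^{\ast}=\mathbf{G}^{(s)}$ (polarity is an involution) one gets $\mathbf{J}^{(s)}=A^{-1}\,{}^{\textsc{T}}\mathbf{R}\,\mathbf{G}^{(s)}$, hence ${}^{\textsc{T}}\mathbf{M}^{(s)}\mathbf{J}^{(s)}=A^{-1}\,{}^{\textsc{T}}(\mathbf{R}\mathbf{M}^{(s)})\,\mathbf{G}^{(s)}$, so the coincidence above reads ${}^{\textsc{T}}(\mathbf{R}\mathbf{M}^{(s)})\mathbf{G}^{(s)}={}^{\textsc{T}}(\mathbf{R}\mathbf{M}^{(t)})\mathbf{G}^{(t)}$. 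Transposing and applying to $\mathbf{\Theta}$, and using $\mathbf{R}\mathbf{M}^{(\cdot)}\mathbf{\Theta}=\mu_{\cdot}\mathbf{R}\mathbf{\Theta}=\mu_{\cdot}\mathbf{X}$ together with ${}^{\textsc{T}}\mathbf{G}^{(s)}\mathbf{X}=\mathbf{X}^{(s)}$ from Lemma~1, I obtain $\mu_{s}\mathbf{X}^{(s)}=\mu_{t}\mathbf{X}^{(t)}$, i.e. $\mathbf{X}^{(s)}=\lambda\mathbf{X}^{(t)}$ with $\lambda=\mu_{t}/\mu_{s}\neq0$. Since $\mathbf{X}^{(s)}=\|\mathbf{X}\|\,{}^{\textsc{T}}(\|\mathbf{g}^{\prime\prime}{}_{0}^{(s)}\|,\|\mathbf{g}^{\prime\prime}{}_{1}^{(s)}\|,\|\mathbf{g}^{\prime\prime}{}_{2}^{(s)}\|)$, dividing by $\|\mathbf{X}\|$ gives exactly the loop asserted by the Proposition.

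The real work here lives inside the two Propositions being invoked (the max‑Dirichlet transfer through $\mathbf{R}$ and the construction of the matrices $\mathbf{M}^{(s)}$ with $\mathbf{\Theta}$ as eigenvector and the norm control of ${}^{\textsc{T}}\mathbf{M}^{(s)}\mathbf{J}^{(s)}$). Within the present argument the only delicate points — and what I expect to be the main obstacle — are verifying that $(\mathbf{A}^{(s)})$ genuinely satisfies the hypotheses of the second Proposition (integrality, common $|\det|$, and the max‑Dirichlet property for $\mathbf{\Delta}$, which is precisely the output of the first Proposition), and keeping the chain of polar identities straight so that the finite‑matrix coincidence collapses cleanly to the relation $\mu_{s}\mathbf{X}^{(s)}=\mu_{t}\mathbf{X}^{(t)}$; the non‑vanishing of the eigenvalues $\mu_{s}$ also has to be recorded, but it follows from the rational independence of $1,\theta,\theta^{2}$ together with the fact that the $\mathbf{M}^{(s)}$ are nonzero.
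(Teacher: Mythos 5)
Your proof is correct and follows essentially the same route as the paper's: construct the rational conjugating matrix $\mathbf{R}$ from the $\mathbb{Q}$-basis property of $(x_0,x_1,x_2)$, feed it into Proposition~1 to get $\mathbf{A}^{(s)}=A\mathbf{R}^{-1}\mathbf{B}^{(s)}$ with the max-Dirichlet property for $\mathbf{\Delta}$, feed that into Proposition~2, pigeonhole the bounded integer matrices $DQ\,{}^{\textsc{T}}\mathbf{M}^{(s)}\mathbf{J}^{(s)}$, and unwind the polarities via $\mathbf{J}^{(s)}=A^{-1}\,{}^{\textsc{T}}\mathbf{R}\,\mathbf{G}^{(s)}$ and the eigenvector relation to land on $\mu_s\mathbf{X}^{(s)}=\mu_t\mathbf{X}^{(t)}$. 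The only (harmless) difference is that you explicitly record the invertibility of $\mathbf{R}$ and the non-vanishing of the eigenvalues $\mu_s$, which the paper leaves implicit.
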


\subsection{Demonstration of the Lemma}

Let's denote:

$\left( \mathbf{P}^{(s)}\right) ^{\ast }=\mathbf{Q}^{(s)}=$ $\left( \mathbf{q%
}_{0}^{(s)},\mathbf{q}_{1}^{(s)},\mathbf{q}_{2}^{(s)}\right) $; then, for
any direct circular permutation $\left( i,j,k\right) $ of $\left(
0,1,2\right) $, forgetting the indices $^{(s)}$ we have:

$\mathbf{q}_{i}=\frac{\varepsilon }{D}\left( \mathbf{p}_{j}\wedge \mathbf{p}%
_{k}\right) $; $\mathbf{q}_{i}^{\prime }=\frac{\varepsilon }{D}\left( 
\mathbf{p}_{j}^{\prime \prime }\wedge \mathbf{p}_{k}^{\prime }+\mathbf{p}%
_{j}^{\prime }\wedge \mathbf{p}_{k}^{\prime \prime }\right) $;$\ \mathbf{q}%
_{i}^{\prime \prime }=\frac{\varepsilon }{D}\left( \mathbf{p}_{j}^{\prime
}\wedge \mathbf{p}_{k}^{\prime }\right) $.

Let's denote: $\underset{i=0,1,2}{\max }\left\Vert \mathbf{p}_{i}^{\prime
(s)}\right\Vert =\mu ^{\prime (s)}$; $\underset{i=0,1,2}{\max }\left\Vert 
\mathbf{p}^{\prime \prime }{}_{i}^{(s)}\right\Vert =\mu ^{\prime \prime (s)}$%
.

By hypothesis, $\left( \mu ^{\prime (s)}\right) ^{2}\mu ^{\prime \prime
(s)}<L$ holds for some $L$ and for every $s$ in the infinite set $S$. For
each $i$, and any $s\in S$, we have: $\left\Vert \mathbf{q}_{i}^{\prime
(s)}\right\Vert \leq \tfrac{2}{D}\mu ^{\prime (s)}\mu ^{\prime \prime (s)}$,
and $\left\Vert \mathbf{q}^{\prime \prime }{}_{i}^{(s)}\right\Vert \leq 
\frac{1}{D}\left( \mu ^{\prime (s)}\right) ^{2}$. Then

$\left( \underset{i=0,1,2}{\max }\left\Vert \mathbf{q}_{i}^{\prime
(s)}\right\Vert \right) ^{2}\left( \underset{i=0,1,2}{\max }\left\Vert 
\mathbf{q}^{\prime \prime }{}_{i}^{(s)}\right\Vert \right) \leq \dfrac{4}{%
D^{3}}\left( \left( \mu ^{\prime (s)}\right) ^{2}\mu ^{\prime \prime
(s)}\right) ^{2}\leq \dfrac{4L^{2}}{D^{3}}$, and then: $\left( \underset{i=0,1,2}{\max }\left\Vert D\mathbf{q}_{i}^{\prime
(s)}\right\Vert \right) ^{2}\left( \underset{i=0,1,2}{\max }\left\Vert D%
\mathbf{q}^{\prime \prime }{}_{i}^{(s)}\right\Vert \right) \leq 4L^{2}$.

We have to prove in addition that the limit of the first factor is null.

By hypothesis: $\underset{s\rightarrow +\infty ,s\in S}{\lim }\left( 
\underset{i=0,1,2}{\max }\left\Vert \mathbf{p}^{\prime \prime
}{}_{i}^{(s)}\right\Vert \right) =0.$ This implies:

$\underset{s\rightarrow +\infty ,s\in S}{\lim }\left( \underset{i=0,1,2}{%
\max }\left\Vert \mathbf{p}_{i}^{\prime (s)}\right\Vert \right) =+\infty $.
Otherwise, the set of all the integer vectors $\mathbf{p}_{i}^{(s)}$, with $%
s $ in some infinite set $T\subset S,$ would be bounded, and then finite.
Then the sequence $\left( \underset{i=0,1,2}{\max }\left\Vert \mathbf{p}%
^{\prime \prime }{}_{i}^{(s)}\right\Vert \right) _{s\in T}$ would have a
non-null minimum. Contradiction!

Then we have: $\underset{s\rightarrow +\infty ,s\in S}{\lim }\mu ^{\prime
(s)}=+\infty $. But we also have, for every $s\in S$,

$\left\Vert \mathbf{q}_{i}^{\prime (s)}\right\Vert \leq \tfrac{2}{D}\mu
^{\prime (s)}\mu ^{\prime \prime (s)}=\tfrac{2}{D}\dfrac{\left( \mu ^{\prime
(s)}\right) ^{2}\mu ^{\prime \prime (s)}}{\mu ^{\prime (s)}}\leq \tfrac{2L}{D%
}\dfrac{1}{\mu ^{\prime (s)}}.$

Then $\underset{s\rightarrow +\infty ,s\in S}{\lim }\left( \underset{i=0,1,2}%
{\max }\left\Vert D\mathbf{q}_{i}^{\prime (s)}\right\Vert \right) =0$.

We have established that the sequence $\left( D.\left( \mathbf{P}%
^{(s)}\right) ^{\ast }\right) $ has the max Dirichlet Property for the line $%
\mathbb{D}=\mathbb{R}\mathbf{X}$.

\subsection{Demonstration of Proposition 1}

Let $\theta $ be a real root of a third degree irreducible polynomial $%
P\left( t\right) =t^{3}-mt-n$, where $m$ and $n$\ are \emph{rationals}.\newline
For the initial value $\mathbf{X}=\mathbf{R\Theta }$ = $\mathbf{R}$~$^{%
\text{T}}\left( 1,\theta ,\theta ^{2}\right) $, let $\left( \mathbf{B}%
^{(s)}\right) $ and  
$\left( \mathbf{G}^{(s)}\right) $ be the sequences of
integral matrices generated by the Smallest Vector Algorithm.

First, we establish that the couple $\left( \mathbb{P},\mathbb{D}\right)
=\left( \mathbf{X}^{\perp },\mathbb{R}\mathbf{X}\right) $ is badly
approximable, in the sense of the Lemma 8 and the following Definition in
Subsection 3.2.

By a classical theorem that we have already cited, (see \cite{CasselsDA}
(Cassels), Theorem III, page 79, statement (2)) the couple $\left( \mathbf{%
\Theta }^{\perp },\mathbb{R}\mathbf{\Theta }\right) $ is badly approximable.
Then: $\underset{\mathbf{k}\text{ integer }\neq \mathbf{0}}{\inf }\left[
\left\vert \mathbf{k}\bullet \mathbf{\Theta }\right\vert .\left\Vert \mathbf{%
k}\right\Vert ^{2}\right] >0$.

Let's suppose that the$\mathbb{\ }$couple $\left( \mathbb{P},\mathbb{D}%
\right) =\left( \mathbf{X}^{\perp },\mathbb{R}\mathbf{X}\right) $ is NOT
badly approximable. Then we would have: $\underset{\mathbf{h}\text{ integer }%
\neq \mathbf{0}}{\inf }\left[ \left\vert \mathbf{h}\bullet \mathbf{R\Theta }%
\right\vert .\left\Vert \mathbf{h}\right\Vert ^{2}\right] =0$;

then $\underset{\mathbf{h}\text{ integer }\neq \mathbf{0}}{\inf }\left[
\left\vert \left( ^{\textsc{T}}\mathbf{Rh}\right) \bullet \mathbf{\Theta }%
\right\vert .\left\Vert ^{\textsc{T}}\mathbf{Rh}\right\Vert ^{2}\right] =0$%
. Let $q$ be an integer number such that $q\mathbf{R}$ has integral
coefficient; then

$\underset{\mathbf{h}\text{ integer }\neq \mathbf{0}}{\inf }\left[
\left\vert \left( q\ ^{\textsc{T}}\mathbf{Rh}\right) \bullet \mathbf{\Theta 
}\right\vert .\left\Vert q\ ^{\textsc{T}}\mathbf{Rh}\right\Vert ^{2}\right]
=0$, with $\left( q\ ^{\textsc{T}}\mathbf{Rh}\right) $ non-null integer. Then $\underset{\mathbf{k}\text{ integer }\neq \mathbf{0}}{\inf }\left[
\left\vert \mathbf{k}\bullet \mathbf{\Theta }\right\vert .\left\Vert \mathbf{%
k}\right\Vert ^{2}\right] =0$. Contradiction. Then the couple $\left( 
\mathbb{P},\mathbb{D}\right) =\left( \mathbf{X}^{\perp },\mathbb{R}\mathbf{X}%
\right) $ is badly approximable.

By the Dirichlet Properties Theorem of Subsection 1.3., part b), the
sequence $\left( \mathbf{G}^{(s)}\right) $, generated from $\mathbf{X}$ by
the Smallest Vector Algorithm, have the max-Dirichlet Property for the
approximation of $\mathbb{P=}\mathbf{X}^{\perp }$. There exists an infinite
subset $S$ of $\mathbb{N}$ such that \newline
 $\underset{s\in S}{\sup }\left[ \left( 
\underset{i=0,1,2}{\max }\left\vert \mathbf{g}_{i}^{(s)}\bullet \mathbf{%
R\Theta }\right\vert \right) \left( \underset{i=0,1,2}{\max }\left\Vert 
\mathbf{g}_{i}^{(s)}\right\Vert \right) ^{2}\right] <+\infty $,

with $\underset{s\rightarrow +\infty ,s\in S}{\lim }\left( \underset{i=0,1,2}%
{\max }\left\vert \mathbf{g}_{i}^{(s)}\bullet \mathbf{R\Theta }\right\vert
\right) =0$. Then:

$\underset{s\in S}{\sup }\left[ \left( \underset{i=0,1,2}{\max }\left\vert
\left( C.^{\textsc{T}}\mathbf{Rg}_{i}^{(s)}\right) \bullet \mathbf{\Theta }%
\right\vert \right) \left( \underset{i=0,1,2}{\max }\left\Vert C.^{\text{T%
}}\mathbf{Rg}_{i}^{(s)}\right\Vert \right) ^{2}\right] <+\infty $,

with $\underset{s\rightarrow +\infty ,s\in S}{\lim }\left( \underset{i=0,1,2}%
{\max }\left\vert \left( C.^{\textsc{T}}\mathbf{Rg}_{i}^{(s)}\right)
\bullet \mathbf{\Theta }\right\vert \right) =0$.

That means that the sequence $\left( C.^{\textsc{T}}\mathbf{RG}%
^{(s)}\right) $ has the max-Dirichlet Property for the approximation of $%
\mathbf{\Pi }=\mathbf{\Theta }^{\perp }$.

But the $\left( C.^{\textsc{T}}\mathbf{RG}^{(s)}\right) $ have all the same
determinant $\left( C^{3}\det \left( \mathbf{R}\right) \right) ,$ up to the
sign. Let $A$ be $A=C^{3}\det \left( \mathbf{R}\right) .$ Then, by the
previous Lemma, the sequence $\left( A\left( ^{\textsc{T}}\mathbf{RG}%
^{(s)}\right) ^{\ast }\right) =$ $\left( A\mathbf{R}^{-1}\left( \mathbf{G}%
^{(s)}\right) ^{\ast }\right) $ of integer matrices has the max-Dirichlet
Property for the approximation of $\mathbf{\Delta }=\mathbb{R}\mathbf{\Theta 
}$.

\subsection{Demonstration of Proposition 2}

Let $\left( \mathbf{A}^{(s)}\right) $ be a sequence of integer matrices
having the max-Dirichlet Property for $\mathbf{\Delta }=\mathbb{R}\mathbf{%
\Theta }=$

$\mathbb{R\ }^{\textsc{T}}\left( 1,\theta ,\theta ^{2}\right) $, with $\theta ^{3}=m\theta +n$.

We suppose that the matrices $\mathbf{A}^{(s)}$ have all the same
determinant $D>0$\emph{,} up to the sign, which means that for each $s\in 
\mathbb{N}\boldsymbol{,}$ $\det \left( \mathbf{A}^{(s)}\right) =\varepsilon
^{\left( s\right) }D$, with $\varepsilon ^{\left( s\right) }\in \left\{
-1;1\right\} .$

Let $\left( \mathbf{a}_{0}^{(s)},\mathbf{a}_{1}^{(s)},\mathbf{a}%
_{2}^{(s)}\right) $\ be the column vectors of $\mathbf{A}^{(s)}$. We choose
one of these three vectors, say $\mathbf{a}_{0}^{(s)}$, which will be more
simply denoted: $\mathbf{a}^{(s)}:=\mathbf{a}_{0}^{(s)}$. Let's define its
coordinates by: $\mathbf{a}^{(s)}=\ ^{\textsc{T}}\left(
a_{x}^{(s)},a_{y}^{(s)},a_{z}^{(s)}\right) $.

Let's denote: $\mu ^{\prime (s)}=\underset{i=0,1,2}{\max }\left\Vert 
\mathbf{a}_{i}^{\prime (s)}\right\Vert $ and $\mu ^{\prime \prime (s)}=$ $%
\underset{i=0,1,2}{\max }\left\Vert \mathbf{a}^{\prime \prime
}{}_{i}^{(s)}\right\Vert $. We suppose that there exist an infinite set $S$
of integers and a real number $L$ such that for each $s\in S,$ the
inequality $\left( \mu ^{\prime (s)}\right) ^{2}\mu ^{\prime \prime (s)}<L$
holds. Let $s$ be any element of $S$. From now on, we may omit the indices $%
^{(s)}$.

The notation $\mathbf{M}=\mathbf{M}^{(s)}$ will denote the rational matrix

$\mathbf{M}^{(s)}=\left( 
\begin{array}{ccc}
-m.a_{x}+a_{z} & a_{y} & a_{x} \\ 
n\cdot a_{x} & a_{z} & a_{y} \\ 
n\cdot a_{y} & n.a_{x}+m.a_{y} & a_{z}%
\end{array}%
\right) $. If $Q$ is a natural such that $Qm$ and $Qn$ are integers, then $Q%
\mathbf{M}^{(s)}$ has integral coefficients.

We have: $\mathbf{M\Theta =M}\left( 
\begin{array}{l}
1 \\ 
\theta \\ 
\theta ^{2}%
\end{array}%
\right) =\left( -m.a_{x}+a_{z}+a_{y}\theta +a_{x}\theta ^{2}\right) \left( 
\begin{array}{l}
1 \\ 
\theta \\ 
\theta ^{2}%
\end{array}%
\right) $.

Let's denote by $\lambda $ the following element of $\mathbb{Z}\left[ \theta %
\right] $:

$\lambda:=\left( -m.a_{x}+a_{z}+a_{y}\theta +a_{x}\theta ^{2}\right) $.
Then we have $\mathbf{M\Theta }=\lambda \mathbf{\Theta }$; $\lambda $ is an
eigenvalue of $\mathbf{M}$ with eigenvector $\mathbf{\Theta }$ .

Let $\left( \mathbf{J}^{(s)}\right) $ the polar matrices of the $\left( 
\mathbf{A}^{(s)}\right) $. We consider the sequence of the matrices $\mathbf{%
\Pi }^{(s)}=\mathbf{\Pi }=$ $^{\textsc{T}}\mathbf{MJ}$.

Let $\left( \mathbf{a}^{\#\#},\mathbf{a}^{\#},\mathbf{a}\right) $ be the
three column vectors of $\mathbf{M}$. Then, with scalar products: $\mathbf{%
\Pi}=\ ^{\textsc{T}}\mathbf{MJ=}\left( 
\begin{array}{ccc}
\mathbf{a}^{\#\#}\bullet \mathbf{j}_{0} & \mathbf{a}^{\#\#}\bullet \mathbf{j}%
_{1} & \mathbf{a}^{\#\#}\bullet \mathbf{j}_{2} \\ 
\mathbf{a}^{\#}\bullet \mathbf{j}_{0} & \mathbf{a}^{\#}\bullet \mathbf{j}_{1}
& \mathbf{a}^{\#}\bullet \mathbf{j}_{2} \\ 
\mathbf{a}\bullet \mathbf{j}_{0} & \mathbf{a}\bullet \mathbf{j}_{1} & 
\mathbf{a}\bullet \mathbf{j}_{1}%
\end{array}%
\right) =\left( \pi _{i,j}\right) $, say$,$ with $i=1,2,3;$ $j=1,2,3$.

We now have to find an upper bound for each of the $\left| \pi _{i,j}\right| 
$.

We have: $\mathbf{a}^{\#}=\mathbf{Q}^{\#}\left( 
\begin{array}{c}
a_{x} \\ 
a_{y} \\ 
a_{z}%
\end{array}%
\right) =\mathbf{Q}^{\#}\mathbf{a}$, with $\mathbf{Q}^{\#}=\left( 
\begin{array}{ccc}
0 & 1 & 0 \\ 
0 & 0 & 1 \\ 
n & m & 0%
\end{array}%
\right) $ and:

$\mathbf{a}^{\#\#}=\mathbf{Q}^{\#\#}\left( 
\begin{array}{c}
a_{x} \\ 
a_{y} \\ 
a_{z}%
\end{array}%
\right) =\mathbf{Q}^{\#\#}\mathbf{a}$, with $\mathbf{Q}^{\#\#}=\left( 
\begin{array}{ccc}
-m & 0 & 1 \\ 
n & 0 & 0 \\ 
0 & n & 0%
\end{array}%
\right) $.

We have: $\mathbf{Q}^{\#}\mathbf{\Theta }=\theta \mathbf{\Theta }$, and $%
\mathbf{Q}^{\#\#}\mathbf{\Theta }=\left( -m+\theta ^{2}\right) \mathbf{%
\Theta }$.

First let's consider the $\left| \mathbf{a}^{\#}\bullet \mathbf{j}%
_{i}\right| $.

Let $\mathbf{\nu }$ be: $\mathbf{\nu:=}\dfrac{\mathbf{\Theta }}{\left\Vert 
\mathbf{\Theta }\right\Vert }$. Then also $\mathbf{Q}^{\#}\mathbf{\nu =}%
\theta \mathbf{\nu }$.

With always the same kind of notations, we have $\mathbf{a}^{\#}=\mathbf{a}%
^{\#\prime }+\mathbf{a}^{\#\prime \prime }$, and:

$\mathbf{a}^{\#}\bullet \mathbf{j}_{i}=\left( \mathbf{a}^{\#\prime }+\mathbf{%
a}^{\#\prime \prime }\right) \bullet \left( \mathbf{j}_{i}^{\prime }+\mathbf{%
j}_{i}^{\prime \prime }\right) =\mathbf{a}^{\#\prime }\bullet \mathbf{j}%
_{i}^{\prime }+\mathbf{a}^{\#\prime \prime }\bullet \mathbf{j}_{i}^{\prime
\prime }=\mathbf{a}^{\#\prime }\bullet \mathbf{j}_{i}^{\prime }+\mathbf{a}%
^{\#\prime \prime }\bullet \mathbf{j}_{i}^{\prime \prime }$, with
$\mathbf{j}_{i}=\frac{\varepsilon }{D}\left( \mathbf{a}_{j}\wedge \mathbf{a}%
_{k}\right) $; $\mathbf{j}_{i}^{\prime }=\frac{\varepsilon }{D}\left( 
\mathbf{a}_{j}^{\prime \prime }\wedge \mathbf{a}_{k}^{\prime }+\mathbf{a}%
_{j}^{\prime }\wedge \mathbf{a}_{k}^{\prime \prime }\right) $; $\mathbf{j}%
_{i}^{\prime \prime }=\frac{\varepsilon }{D}\left( \mathbf{a}_{j}^{\prime
}\wedge \mathbf{a}_{k}^{\prime }.\right) $. Then: 
\begin{equation}
\left\vert \mathbf{a}^{\#}\bullet \mathbf{j}_{i}\right\vert \leq \frac{1}{D}%
\left( \left\Vert \mathbf{a}^{\#\prime }\right\Vert ~\mu ^{\prime \prime
}\mu ^{\prime }++\left\Vert \mathbf{a}^{\#\prime }\right\Vert ~\mu ^{\prime
}\mu ^{\prime \prime }+\left\Vert \mathbf{a}^{\#\prime \prime }{}\right\Vert
~\left( \mu ^{\prime }\right) ^{2}\right)  \label{IneqDiese5A}
\end{equation}%
and we have also $\mathbf{a}^{\#}=\mathbf{Q}^{\#}\mathbf{a}=\mathbf{Q}%
^{\#}\left( \mathbf{a}_{0}^{\prime }+\left\Vert \mathbf{a}_{0}^{\prime
\prime }\right\Vert \mathbf{\nu }\right) =\mathbf{Q}^{\#}\mathbf{a}%
_{0}^{\prime }+\left\Vert \mathbf{a}_{0}^{\prime \prime }\right\Vert \theta 
\mathbf{\nu }$.

This proves first that the distance $\left\Vert \mathbf{a}^{\#\prime
}\right\Vert $ between $\mathbf{a}^{\#}$ and $\mathbb{D}$ is less than $%
\left\Vert \mathbf{Q}^{\#}\mathbf{a}_{0}^{\prime }\right\Vert$: 
\begin{equation}
\left\Vert \mathbf{a}^{\#\prime }\right\Vert \leq \left\Vert \mathbf{Q}^{\#}%
\mathbf{a}_{0}^{\prime }\right\Vert \leq \left\Vert \mathbf{Q}%
^{\#}\right\Vert \times \left\Vert \mathbf{a}_{0}^{\prime }\right\Vert \leq
\left\Vert \mathbf{Q}^{\#}\right\Vert \mu ^{\prime }  \label{InegPrim5}
\end{equation}%
(this using the norm of the matrix).

Moreover, we have $\mathbf{a}^{\#\prime }+\mathbf{a}^{\#\prime \prime }$ $=%
\mathbf{a}^{\#}=\mathbf{Q}^{\#}\mathbf{a}_{0}^{\prime }+\left\Vert \mathbf{a}%
_{0}^{\prime \prime }\right\Vert \theta \mathbf{\nu }$

then: $\mathbf{a}^{\#\prime \prime }=\left\Vert \mathbf{a}_{0}^{\prime
\prime }\right\Vert \theta \mathbf{\nu +\mathbf{Q}^{\#}\mathbf{a}%
_{0}^{\prime }-}$ $\mathbf{a}^{\#\prime }$. Then: 
\begin{equation}
\left\Vert \mathbf{a}^{\#\prime \prime }\right\Vert \leq \left\Vert \mathbf{a%
}_{0}^{\prime \prime }\right\Vert \theta +2\left\Vert \mathbf{Q}%
^{\#}\right\Vert \times \left\Vert \mathbf{a}_{0}^{\prime }\right\Vert \leq
\mu ^{\prime \prime }\theta +2\left\Vert \mathbf{Q}^{\#}\right\Vert \mu
^{\prime }  \label{Ineq Sec5}
\end{equation}%
Putting \ref{InegPrim5} and \ref{Ineq Sec5} in \ref{IneqDiese5A}, we obtain:

$\left\vert \mathbf{a}^{\#(s)}\bullet \mathbf{j}_{i}^{(s)}\right\vert \leq 
\frac{1}{D}\left( \mu ^{\prime (s)}\right) ^{2}\mu ^{\prime \prime
(s)}\left( 2\left\Vert \mathbf{Q}^{\#}\right\Vert +\theta \right) +\frac{2}{D%
}\left\Vert \mathbf{Q}^{\#}\right\Vert \left( \mu ^{\prime (s)}\right) ^{3}$,

and then: 
\begin{equation*}
\left\vert \mathbf{a}^{\#(s)}\bullet \mathbf{j}_{i}^{(s)}\right\vert \leq 
\dfrac{L}{D}\left( 2\left\Vert \mathbf{Q}^{\#}\right\Vert +\theta \right) +%
\frac{2}{D}\left\Vert \mathbf{Q}^{\#}\right\Vert \left( \mu ^{\prime
(s)}\right) ^{3}
\end{equation*}%
The limit of the last term is $0$.

Then the set of the $\left\vert \mathbf{a}^{\#(s)}\bullet \mathbf{j}%
_{i}^{(s)}\right\vert $, with $s$ in $S$, is bounded. A similar
demonstration shows that the $\left\vert \mathbf{a}^{\#\#(s)}\bullet \mathbf{%
j}_{i}^{(s)}\right\vert $ are also bounded and so are in an obvious way the $%
\left\vert \mathbf{a}_{0}^{(s)}\bullet \mathbf{j}_{i}^{(s)}\right\vert $.
Then the set of the $\mathbf{\Pi }^{(s)}=$ $^{\textsc{T}}\mathbf{M}^{(s)}%
\mathbf{J}^{(s)}$ is bounded. But $\mathbf{J}^{(s)}=\left( \mathbf{A}%
^{(s)}\right) ^{\ast }$, with $\det \left( \mathbf{A}^{(s)}\right) =\pm D$.
Then the matrices $D\mathbf{J}^{(s)}$ have integral coefficients. We have
seen that the matrices $Q\mathbf{M}^{(s)}$ have also integral coefficients.
In addition, the sequence $\left( DQ~^{\textsc{T}}\mathbf{M}^{(s)}\mathbf{J}%
^{(s)}\right) $ is bounded, and the proof is done.

\subsection{Demonstration of Proposition 3}

Let $\theta $ be a real root of a third degree irreducible polynomial $%
P\left( t\right) =t^{3}-mt-n$, with $m$ and $n$\ \emph{rationals.} Let $%
\mathbf{X}=\ ^{\textsc{T}}\left( x_{0},x_{1},x_{2}\right) $ be a free
triple of three positive real numbers from the ring $\mathbb{Q}\left[ \theta \right] $%
. Let $\mathbf{\Theta }$ be $\mathbf{\Theta =\ }^{\textsc{T}}\left(
1,\theta ,\theta ^{2}\right) $. Then there exists a rational matrix $\mathbf{%
R}$ , with $\det \left( \mathbf{R}\right) \neq 0$, such that $\mathbf{X}=%
\mathbf{R\Theta }$.

Then, by Proposition 1, there exists an integer $A$ such that the matrices $%
\mathbf{A}^{(s)}=A\mathbf{R}^{-1}\left( \mathbf{G}^{(s)}\right) ^{\ast }$
are integer, and such that the sequence $\left( \mathbf{A}^{(s)}\right) $
has the max-Dirichlet Property for the approximations of $\mathbf{\Delta }=%
\mathbb{R}\mathbf{\Theta }$. All the matrices $\mathbf{A}^{(s)}$ have the
same determinant, say $D>0$, up to the sign; then, by Proposition 2, there
exists a sequence of integer matrices $\left( \mathbf{M}^{(s)}\right) $ such
that, for each $s$, $\mathbf{\Theta }$ is an eigenvector for $\mathbf{M}%
^{(s)}$ and $\underset{s\rightarrow +\infty }{\lim \inf }\left( \left\Vert
DQ.^{\textsc{T}}\mathbf{M}^{(s)}\mathbf{J}^{(s)}\right\Vert \right)
<+\infty $, the matrices $\left( DQ.^{\textsc{T}}\mathbf{M}^{(s)}\mathbf{J}%
^{(s)}\right) $ having integral coefficients, with

$\mathbf{J}^{(s)}=\left( \mathbf{A}^{(s)}\right) ^{\ast }=\left( A\mathbf{R}%
^{-1}\left( \mathbf{G}^{(s)}\right) ^{\ast }\right) ^{\ast }=A^{-1}\ ^{%
\text{T}}\mathbf{RG}^{(s)}$.

There exists an infinite subset $S$ of $\mathbb{N}$, such that the set of
all the integer matrices $\left( DQ\ ^{\textsc{T}}\mathbf{M}^{(s)}\mathbf{J}%
^{(s)}\right) $ with $s$ in $S$ is bounded; then it is finite. Then there
exist $s$ and $t$, $t\neq s$, such that $^{\textsc{T}}\mathbf{M}^{(s)}{}^{%
\text{T}}\mathbf{R~G}^{(s)}$ $=\ ^{\textsc{T}}\mathbf{M}^{(t)}{}^{%
\text{T}}\mathbf{R~G}^{(t)}$. By transposition: $\left( \mathbf{B}^{(s)}\right) ^{-1}\mathbf{RM}^{(s)}=\left( \mathbf{B}%
^{(t)}\right) ^{-1}\mathbf{RM}^{(t)}$. We apply that to the column vector $%
\mathbf{\Theta }$:
$\left( \mathbf{B}^{(s)}\right) ^{-1}\mathbf{RM}^{(s)}\mathbf{\Theta }%
=\left( \mathbf{B}^{(t)}\right) ^{-1}\mathbf{RM}^{(t)}\mathbf{\Theta }$;
then, by \textquotedblright eigenvector\textquotedblright , and because $%
\mathbf{R\Theta }=\mathbf{X}$, we have $\lambda ^{(s)}\left( \mathbf{B}%
^{(s)}\right) ^{-1}\mathbf{X}=\lambda ^{(t)}\left( \mathbf{B}^{(t)}\right)
^{-1}\mathbf{X}$, then $\left( \mathbf{B}^{(s)}\right) ^{-1}\mathbf{X}=\dfrac{\lambda ^{(t)}}{%
\lambda ^{(s)}}\left( \mathbf{B}^{(t)}\right) ^{-1}\mathbf{X}$.

This reads: $\mathbf{X}^{\left( s\right) }=\lambda \mathbf{X}^{\left(
t\right) }$, with $\lambda =\frac{\lambda ^{(t)}}{\lambda ^{(s)}}$, and our
Lagrange Theorem is proved if $\mathbf{X}=\ ^{\textsc{T}}\left(
x_{0},x_{1},x_{2}\right) $ is a free triplet of three positive real numbers from the
ring $\mathbb{Q}\left[ \theta \right] $, $\theta $ being a real root of a
third degree irreducible polynomial $P\left( t\right) =t^{3}-mt-n$, with $m$
and $n$\ rationals\emph{.} Of course this case is general, as we're going to verify it.

\subsection{From Proposition 3 to the Lagrange Theorem}

This part is very quick. Let $\rho $ be a real root of a third degree
irreducible polynomial $S\left( t\right) =t^{3}-at^{2}-bt-c$, with with $%
a,b,c$ rationals. Then $\theta =\rho -\frac{a}{3}$ is a real root of a third
degree irreducible polynomial $ P\left( t\right) =t^{3}-mt-n$, with $m$ and $%
n $\ rationals. If $x_{0},x_{1},x_{2}$ are elements of $\mathbb{Q}\left[
\rho \right] $, they also belong to $\mathbb{Q}\left[ \theta \right] =%
\mathbb{Q}\left[ \rho \right] $. Then Proposition 3 implies the conclusion
of the main Lagrange Theorem (first part). The second part of the theorem
has been established in Section 2.

\section{Bibliography and Themes related to this Paper}

The work nearest to the present paper is the book by A.J. Brentjes \cite%
{Brentj}. A lot of themes are in common: the approach of the continued
fractions with matrices and linear algebra, the fact that \emph{non vectorial%
} algorithms are used, the study of angular properties and of the needling
phenomenon...Brentjes' book is mainly concerned with algebraic results, best
approximation, and (strong) convergence, rather than with "Dirichlet"
approximation, with the optimal exponent, or "Lagrange" results. However, it
contains a Lagrange-type statement, in the Corollary, page 106, but with a
lattice which is not $%
\mathbb{Z}
^{3}.$

Most multidimensional continued fractions algorithms, among those \newline
which are
additive (or subtractive, or multiplicative), are of the \emph{vectorial}
type. This means that in such an algorithm, the vector $\mathbf{X}^{(s+1)}$
depends only on $\mathbf{X}^{(s)}=\left\Vert \mathbf{X}\right\Vert \left( \mathbf{g}%
_{0}^{\prime \prime (s)},\mathbf{g}_{1}^{\prime \prime (s)},\mathbf{g}%
_{2}^{\prime \prime (s)}\right)$, in a simple way, and not on \newline
 $\left( 
\mathbf{g}_{0}^{\prime (s)},\mathbf{g}_{1}^{\prime (s)},\mathbf{g}%
_{2}^{\prime (s)}\right) $ in the plane $\mathbb{P}$. In this case, the
algorithm defines clearly a discrete dynamical system, the orbits of which
are the sequences $\left( \mathbf{X}^{(s)}\right) $. There are a lot of
interesting studies of these dynamical systems, by Fritz Schweiger, J.C.
Lagarias and many others, but our algorithm, like Brentjes' one, is \emph{%
non vectorial}, and different techniques are used. With such non vectorial
algorithms, results \emph{everywhere }may be obtained. With vectorial ones,
most of the results are obtained \emph{almost everywhere}.

Apart from Brentjes' book, there is another treatise by Fritz Schweiger on
multidimensional continued fractions \cite{Schwei}. It is very complete and
presents the general Brentjes' algorithms, but deals mainly with vectorial
algorithms and dynamical systems.

The continued fractions are only a tool in the theory of Diophantine
Approximation. Here we use two theorems by Minkowski in Geometry of Numbers.
The references in these fields are for instance: \cite{Cassels}, \cite%
{CasselsDA}, \cite{Koksma}, and \cite{Schmidt}.

In the area of \emph{algorithms }which aim to \emph{best approximation},
apart from the specific Brentjes' algorithm, we may cite the Furtw\"{a}%
ngler's algorithm \cite{Furtw} (an error was pointed out by K.M. Briggs, see
his paper), which inspired Keith Briggs \cite{BriggsF} and Vaughan Clarkson
\ \cite{ClarksonPM}; see also the Ph. D. thesis of V. Clarkson: \cite%
{Clarkson}.

There are some studies of the matrices of best approximations, which could
be connected to our work: By J.C. Lagarias: \cite{Lagarias}, and \cite%
{LagariasOne}, and a review by N.G. Moshchevitin: \cite{Moshchevitin}.

In the present paper, the result on best approximations is the Prism Lemma,
at the beginning of Section 3. It is an easy result, but perhaps it
clarifies the problem. It is more efficient if the hexagon it involves is
balanced, and we have some results in this direction in this paper.

J.C. Lagarias has also build in \cite{LagariasALG} a very interesting
algorithm, which is additive but not positive, and which provides best
approximations. See
also the very complete paper by N. Chevallier: \cite{Chevallier}.

The LLL algorithm (named after A.K. Lenstra, H.W. Lenstra, L. Lov\`{a}sz) is
very efficient in Number Theory. It provides good approximations, and even
Dirichlet approximations, with the optimal exponent: see \cite{SmeetsBos},
by W. Bosma and I. Smeets. But maybe it is not designed to possess
approximation properties with \emph{triplets }of integer vectors, nor \emph{%
Lagrange} properties, as the Smallest Vector Algorithm does.

There is an another kind of \emph{Multidimensional Continued Fractions},
very different from the additive (i.e. subtractive) ones we have considered
until now. These other constructions use \emph{stars of sails}$,$ obtained
from hyperplanes and pyramids in $\mathbb{R}^{k}$. The original idea is due
to K. Klein, H. Minkowski, and G. F. Voronoi. V.\ I. Arnold renewed the
interest toward this theory: \cite{Arnold}. "Lagrange" results seem to have
been obtained, by G. Lachaud, \cite{Lachaud}, E. Korkina \cite{Korkina}, or
O.N. German and E.L Lakshtanov: \cite{German}. But their statements don't
seem as simple as the Theorem 1\ of the present work. In the cited paper,
V.I. Arnold has written:

"The attempts to generalize to higher dimensions the \emph{algorithm }%
(emphasized by V.I. Arnold) of continued fractions lead to complicate and
ugly theories. For instance the sail corresponding to a cubical irrational
number is a double-periodic surface. However the algorithms define instead
of this surface a \emph{path }on it. [...] the path is not periodic at all
and looks like a rather chaotic object; it is unclear how to describe the
cubic irrationals in terms of the combinatorics of this path".

We can just hope that Arnold was only partly right. It would be interesting
to study the relation between the regularities we have pointed out in the "chaotic" paths generated
by our algorithm for cubic numbers, and the symmetries of the corresponding
sails.

\section{Acknowledgments}

I want to thank Professor Fritz Schweiger for his generous mathematical
help, his encouragements, his numerous and accurate readings of my papers
and his remarks. I also owe my gratitude to Professor Eug\`{e}ne Dubois, who
has given a lot of his time to read a previous version of this paper and to
Professor Michel Mend\`{e}s-France, in Bordeaux I University. I also thank
the anonymous referee, for having pointed out several gaps and mistakes in
previous versions of some demonstrations. A friendly thank to my Dax
colleague Philippe Paya, for all the fruitful talks and interesting remarks.

\end{document}